\documentclass[a4paper,11pt]{article}
\usepackage{MyStyle}
\usepackage{tikz-cd}

\usepackage{hyperref}
\usepackage{leftindex}
\usepackage{shuffle}

\usepackage{algorithm}
\floatname{algorithm}{Method}
\usepackage{algorithmic}
\usepackage{enumerate}
\newcolumntype{C}{>{$}c<{$}}
\usepackage{planarforest}
\newcommand{\forestA}{
\tikz[planar forest ] {

\node [b] at (0.0, 0.0) {  } 
;
}}

\newcommand{\forestB}{
\tikz[planar forest ] {

\node [b] at (0.0, 0.0) {  } 
;
}}

\newcommand{\forestC}{
\tikz[planar forest ] {

\node [b] at (0.0, 0.0) {  } 
;
}}

\newcommand{\forestD}{
\tikz[planar forest ] {

\node [b] at (0.0, 0.0) {  } 
;
}}

\newcommand{\forestE}{
\tikz[planar forest ] {

\node [b] at (0.0, 0.0) {  } 
;
}}

\newcommand{\forestF}{
\tikz[planar forest ] {

\node [b] at (0.0, 0.0) {  } 
;
}}

\newcommand{\forestG}{
\tikz[planar forest ] {

\node [b] at (0.0, 0.0) {  } 
;
}}

\newcommand{\forestH}{
\tikz[planar forest ] {

\node [b] at (0.0, 0.0) {  } 
child {node [b] at (0.0, 1.0) {  }  
}
;
}}

\newcommand{\forestI}{
\tikz[planar forest ] {

\node [b] at (0.0, 0.0) {  } 
child {node [b] at (0.0, 1.0) {  }  
child {node [b] at (0.0, 1.0) {  }  
}
}
;
}}

\newcommand{\forestJ}{
\tikz[planar forest ] {

\node [b] at (0.0, 0.0) {  } 
child {node [b] at (-0.5, 1.0) {  }  
}
child {node [b] at (0.5, 1.0) {  }  
}
;
}}

\newcommand{\forestK}{
\tikz[planar forest ] {

\node [b] at (0.0, 0.0) {  } 
child {node [b] at (0.0, 1.0) {  }  
child {node [b] at (0.0, 1.0) {  }  
child {node [b] at (0.0, 1.0) {  }  
}
}
}
;
}}

\newcommand{\forestL}{
\tikz[planar forest ] {

\node [b] at (0.0, 0.0) {  } 
child {node [b] at (0.0, 1.0) {  }  
child {node [b] at (-0.5, 1.0) {  }  
}
child {node [b] at (0.5, 1.0) {  }  
}
}
;
}}

\newcommand{\forestM}{
\tikz[planar forest ] {

\node [b] at (0.0, 0.0) {  } 
child {node [b] at (-0.5, 1.0) {  }  
}
child {node [b] at (0.5, 1.0) {  }  
child {node [b] at (0.0, 1.0) {  }  
}
}
;
}}

\newcommand{\forestN}{
\tikz[planar forest ] {

\node [b] at (0.0, 0.0) {  } 
child {node [b] at (-0.5, 1.0) {  }  
child {node [b] at (0.0, 1.0) {  }  
}
}
child {node [b] at (0.5, 1.0) {  }  
}
;
}}

\newcommand{\forestO}{
\tikz[planar forest ] {

\node [b] at (0.0, 0.0) {  } 
child {node [b] at (-1.0, 1.0) {  }  
}
child {node [b] at (0.0, 1.0) {  }  
}
child {node [b] at (1.0, 1.0) {  }  
}
;
}}

\newcommand{\forestP}{
\tikz[planar forest ] {

\node [b] at (0.0, 0.0) {  } 
child {node [b] at (0.0, 1.0) {  }  
child {node [b] at (0.0, 1.0) {  }  
child {node [b] at (0.0, 1.0) {  }  
child {node [b] at (0.0, 1.0) {  }  
}
}
}
}
;
}}

\newcommand{\forestQ}{
\tikz[planar forest ] {

\node [b] at (0.0, 0.0) {  } 
child {node [b] at (0.0, 1.0) {  }  
child {node [b] at (0.0, 1.0) {  }  
child {node [b] at (-0.5, 1.0) {  }  
}
child {node [b] at (0.5, 1.0) {  }  
}
}
}
;
}}

\newcommand{\forestR}{
\tikz[planar forest ] {

\node [b] at (0.0, 0.0) {  } 
child {node [b] at (0.0, 1.0) {  }  
child {node [b] at (-0.5, 1.0) {  }  
}
child {node [b] at (0.5, 1.0) {  }  
child {node [b] at (0.0, 1.0) {  }  
}
}
}
;
}}

\newcommand{\forestS}{
\tikz[planar forest ] {

\node [b] at (0.0, 0.0) {  } 
child {node [b] at (0.0, 1.0) {  }  
child {node [b] at (-0.5, 1.0) {  }  
child {node [b] at (0.0, 1.0) {  }  
}
}
child {node [b] at (0.5, 1.0) {  }  
}
}
;
}}

\newcommand{\forestT}{
\tikz[planar forest ] {

\node [b] at (0.0, 0.0) {  } 
child {node [b] at (0.0, 1.0) {  }  
child {node [b] at (-1.0, 1.0) {  }  
}
child {node [b] at (0.0, 1.0) {  }  
}
child {node [b] at (1.0, 1.0) {  }  
}
}
;
}}

\newcommand{\forestU}{
\tikz[planar forest ] {

\node [b] at (0.0, 0.0) {  } 
child {node [b] at (-0.5, 1.0) {  }  
}
child {node [b] at (0.5, 1.0) {  }  
child {node [b] at (0.0, 1.0) {  }  
child {node [b] at (0.0, 1.0) {  }  
}
}
}
;
}}

\newcommand{\forestV}{
\tikz[planar forest ] {

\node [b] at (0.0, 0.0) {  } 
child {node [b] at (-0.5, 1.0) {  }  
child {node [b] at (0.0, 1.0) {  }  
child {node [b] at (0.0, 1.0) {  }  
}
}
}
child {node [b] at (0.5, 1.0) {  }  
}
;
}}

\newcommand{\forestW}{
\tikz[planar forest ] {

\node [b] at (0.0, 0.0) {  } 
child {node [b] at (-0.5, 1.0) {  }  
}
child {node [b] at (0.5, 1.0) {  }  
child {node [b] at (-0.5, 1.0) {  }  
}
child {node [b] at (0.5, 1.0) {  }  
}
}
;
}}

\newcommand{\forestX}{
\tikz[planar forest ] {

\node [b] at (0.0, 0.0) {  } 
child {node [b] at (-0.5, 1.0) {  }  
child {node [b] at (-0.5, 1.0) {  }  
}
child {node [b] at (0.5, 1.0) {  }  
}
}
child {node [b] at (0.5, 1.0) {  }  
}
;
}}

\newcommand{\forestY}{
\tikz[planar forest ] {

\node [b] at (0.0, 0.0) {  } 
child {node [b] at (-0.5, 1.0) {  }  
child {node [b] at (0.0, 1.0) {  }  
}
}
child {node [b] at (0.5, 1.0) {  }  
child {node [b] at (0.0, 1.0) {  }  
}
}
;
}}

\newcommand{\forestAB}{
\tikz[planar forest ] {

\node [b] at (0.0, 0.0) {  } 
child {node [b] at (-1.0, 1.0) {  }  
}
child {node [b] at (0.0, 1.0) {  }  
}
child {node [b] at (1.0, 1.0) {  }  
child {node [b] at (0.0, 1.0) {  }  
}
}
;
}}

\newcommand{\forestBB}{
\tikz[planar forest ] {

\node [b] at (0.0, 0.0) {  } 
child {node [b] at (-1.0, 1.0) {  }  
}
child {node [b] at (0.0, 1.0) {  }  
child {node [b] at (0.0, 1.0) {  }  
}
}
child {node [b] at (1.0, 1.0) {  }  
}
;
}}

\newcommand{\forestCB}{
\tikz[planar forest ] {

\node [b] at (0.0, 0.0) {  } 
child {node [b] at (-1.0, 1.0) {  }  
child {node [b] at (0.0, 1.0) {  }  
}
}
child {node [b] at (0.0, 1.0) {  }  
}
child {node [b] at (1.0, 1.0) {  }  
}
;
}}

\newcommand{\forestDB}{
\tikz[planar forest ] {

\node [b] at (0.0, 0.0) {  } 
child {node [b] at (-1.5, 1.0) {  }  
}
child {node [b] at (-0.5, 1.0) {  }  
}
child {node [b] at (0.5, 1.0) {  }  
}
child {node [b] at (1.5, 1.0) {  }  
}
;
}}

\newcommand{\forestEB}{
\tikz[planar forest ] {

\node [b] at (0.0, 0.0) {  } 
child {node [b] at (-0.5, 1.0) {  }  
}
child {node [b] at (0.5, 1.0) {  }  
child {node [b] at (0.0, 1.0) {  }  
}
}
;
}}

\newcommand{\forestFB}{
\tikz[planar forest ] {

\node [b] at (0.0, 0.0) {  } 
;
}}

\newcommand{\forestGB}{
\tikz[planar forest ] {

\node [b] at (0.0, 0.0) {  } 
;
}}

\newcommand{\forestHB}{
\tikz[planar forest ] {

\node [b] at (0.0, 0.0) {  } 
;
}}

\newcommand{\forestIB}{
\tikz[planar forest ] {

\node [b] at (0.0, 0.0) {  } 
;
}}

\newcommand{\forestJB}{
\tikz[planar forest ] {

\node [b] at (0.0, 0.0) {  } 
child {node [b] at (-0.5, 1.0) {  }  
child {node [b] at (0.0, 1.0) {  }  
}
}
child {node [b] at (0.5, 1.0) {  }  
}
;
}}

\newcommand{\forestKB}{
\tikz[planar forest ] {

\node [b] at (0.0, 0.0) {  } 
;
}}

\newcommand{\forestLB}{
\tikz[planar forest ] {

\node [b] at (0.0, 0.0) {  } 
;
}}

\newcommand{\forestMB}{
\tikz[planar forest ] {

\node [b] at (0.0, 0.0) {  } 
;
}}

\newcommand{\forestNB}{
\tikz[planar forest ] {

\node [b] at (0.0, 0.0) {  } 
;
}}

\newcommand{\forestOB}{
\tikz[planar forest ] {

\node [w] at (0.0, 0.0) {  } 
child {node [w] at (0.0, 1.0) {  }  
}
;
}}

\newcommand{\forestPB}{
\tikz[planar forest ] {

\node [b] at (0.0, 0.0) {  } 
child {node [b] at (-0.5, 1.0) {  }  
child {node [b] at (0.0, 1.0) {  }  
}
}
child {node [b] at (0.5, 1.0) {  }  
}
;
}}

\newcommand{\forestQB}{
\tikz[planar forest ] {

\node [b] at (0.0, 0.0) {  } 
child {node [w] at (-1.0, 1.0) {  }  
child {node [w] at (0.0, 1.0) {  }  
}
}
child {node [b] at (0.0, 1.0) {  }  
child {node [b] at (0.0, 1.0) {  }  
}
}
child {node [b] at (1.0, 1.0) {  }  
}
;
}}

\newcommand{\forestRB}{
\tikz[planar forest ] {

\node [b] at (0.0, 0.0) {  } 
child {node [b] at (-0.5, 1.0) {  }  
child {node [w] at (-0.5, 1.0) {  }  
child {node [w] at (0.0, 1.0) {  }  
}
}
child {node [b] at (0.5, 1.0) {  }  
}
}
child {node [b] at (0.5, 1.0) {  }  
}
;
}}

\newcommand{\forestSB}{
\tikz[planar forest ] {

\node [b] at (0.0, 0.0) {  } 
child {node [b] at (-0.5, 1.0) {  }  
child {node [b] at (0.0, 1.0) {  }  
}
}
child {node [b] at (0.5, 1.0) {  }  
child {node [w] at (0.0, 1.0) {  }  
child {node [w] at (0.0, 1.0) {  }  
}
}
}
;
}}

\newcommand{\forestTB}{
\tikz[planar forest ] {

\node [b] at (0.0, 0.0) {  } 
child {node [b] at (-0.5, 1.0) {  }  
child {node [b] at (0.0, 1.0) {  }  
child {node [w] at (0.0, 1.0) {  }  
child {node [w] at (0.0, 1.0) {  }  
}
}
}
}
child {node [b] at (0.5, 1.0) {  }  
}
;
}}

\newcommand{\forestUB}{
\tikz[planar forest ] {

\node [b] at (0.0, 0.0) {  } 
child {node [b] at (-2.0, 1.0) {  }  
child {node [w] at (0.0, 1.0) {  }  
}
}
child {node [b] at (0.0, 1.0) {  }  
child {node [b] at (-1.0, 1.0) {  }  
child {node [b] at (0.0, 1.0) {  }  
}
}
child {node [e] at (0.0, 1.0) {  }  
}
child {node [w] at (1.0, 1.0) {  }  
}
}
child {node [b] at (1.0, 1.0) {  }  
}
;
}}

\newcommand{\forestVB}{
\tikz[planar forest ] {

\node [b] at (0.0, 0.0) {  } 
;
}}

\newcommand{\forestWB}{
\tikz[planar forest ] {

\node [w] at (0.0, 0.0) {  } 
;
}}

\newcommand{\forestXB}{
\tikz[planar forest ] {

\node [b] at (0.0, 0.0) {  } 
child {node [b] at (-2.0, 1.0) {  }  
child {node [w] at (0.0, 1.0) {  }  
}
}
child {node [b] at (0.0, 1.0) {  }  
child {node [b] at (-1.0, 1.0) {  }  
child {node [b] at (0.0, 1.0) {  }  
}
}
child {node [b] at (0.0, 1.0) {  }  
}
child {node [w] at (1.0, 1.0) {  }  
}
}
child {node [e] at (1.0, 1.0) {  }  
}
;
}}

\newcommand{\forestYB}{
\tikz[planar forest ] {

\node [b] at (0.0, 0.0) {  } 
;
}}

\newcommand{\forestAC}{
\tikz[planar forest ] {

\node [w] at (0.0, 0.0) {  } 
;
}}

\newcommand{\forestBC}{
\tikz[planar forest ] {

\node [b] at (0.0, 0.0) {  } 
;
}}

\newcommand{\forestCC}{
\tikz[planar forest ] {

\node [w] at (0.0, 0.0) {  } 
;
}}

\newcommand{\forestDC}{
\tikz[planar forest ] {

\node [b] at (0.0, 0.0) {  } 
child {node [b] at (-1.0, 1.0) {  }  
}
child {node [e] at (0.0, 1.0) {  }  
}
child {node [b] at (1.0, 1.0) {  }  
}
;
}}

\newcommand{\forestEC}{
\tikz[planar forest ] {

\node [w] at (0.0, 0.0) {  } 
child {node [w] at (0.0, 1.0) {  }  
}
;
}}

\newcommand{\forestFC}{
\tikz[planar forest ] {

\node [b] at (0.0, 0.0) {  } 
child {node [b] at (-1.0, 1.0) {  }  
}
child {node [w] at (0.0, 1.0) {  }  
child {node [w] at (0.0, 1.0) {  }  
}
}
child {node [b] at (1.0, 1.0) {  }  
}
;
}}

\newcommand{\forestGC}{
\tikz[planar forest ] {

\node [b] at (0.0, 0.0) {  } 
child {node [b] at (-1.0, 1.0) {  }  
}
child {node [e] at (0.0, 1.0) {  }  
}
child {node [b] at (1.0, 1.0) {  }  
}
;
}}

\newcommand{\forestHC}{
\tikz[planar forest ] {

\node [w] at (0.0, 0.0) {  } 
child {node [w] at (0.0, 1.0) {  }  
child {node [e] at (0.0, 1.0) {  }  
}
}
;
}}

\newcommand{\forestIC}{
\tikz[planar forest ] {

\node [b] at (0.0, 0.0) {  } 
child {node [b] at (-1.0, 1.0) {  }  
}
child {node [w] at (0.0, 1.0) {  }  
child {node [w] at (0.0, 1.0) {  }  
child {node [e] at (0.0, 1.0) {  }  
}
}
}
child {node [b] at (1.0, 1.0) {  }  
}
;
}}

\newcommand{\forestJC}{
\tikz[planar forest ] {

\node [b] at (0.0, 0.0) {  } 
child {node [b] at (-1.0, 1.0) {  }  
}
child {node [e] at (0.0, 1.0) {  }  
}
child {node [b] at (1.0, 1.0) {  }  
}
;
}}

\newcommand{\forestKC}{
\tikz[planar forest ] {

\node [w] at (0.0, 0.0) {  } 
child {node [w] at (0.0, 1.0) {  }  
}
;
}}

\newcommand{\forestLC}{
\tikz[planar forest ] {

\node [b] at (0.0, 0.0) {  } 
child {node [b] at (-1.5, 1.0) {  }  
}
child {node [w] at (-0.5, 1.0) {  }  
child {node [w] at (0.0, 1.0) {  }  
}
}
child {node [e] at (0.5, 1.0) {  }  
}
child {node [b] at (1.5, 1.0) {  }  
}
;
}}

\newcommand{\forestMC}{
\tikz[planar forest ] {

\node [b] at (0.0, 0.0) {  } 
child {node [b] at (-1.5, 1.0) {  }  
}
child {node [e] at (-0.5, 1.0) {  }  
}
child {node [w] at (0.5, 1.0) {  }  
child {node [w] at (0.0, 1.0) {  }  
}
}
child {node [b] at (1.5, 1.0) {  }  
}
;
}}

\newcommand{\forestNC}{
\tikz[planar forest ] {

\node [b] at (0.0, 0.0) {  } 
child {node [b] at (-1.0, 1.0) {  }  
}
child {node [e] at (0.0, 1.0) {  }  
}
child {node [b] at (1.0, 1.0) {  }  
}
;
}}

\newcommand{\forestOC}{
\tikz[planar forest ] {

\node [w] at (0.0, 0.0) {  } 
child {node [w] at (0.0, 1.0) {  }  
}
;
}}

\newcommand{\forestPC}{
\tikz[planar forest ] {

\node [w] at (0.0, 0.0) {  } 
;
}}

\newcommand{\forestQC}{
\tikz[planar forest ] {

\node [b] at (0.0, 0.0) {  } 
child {node [b] at (-1.5, 1.0) {  }  
}
child {node [w] at (-0.5, 1.0) {  }  
child {node [w] at (0.0, 1.0) {  }  
}
}
child {node [w] at (0.5, 1.0) {  }  
}
child {node [b] at (1.5, 1.0) {  }  
}
;
}}

\newcommand{\forestRC}{
\tikz[planar forest ] {

\node [b] at (0.0, 0.0) {  } 
child {node [b] at (-1.5, 1.0) {  }  
}
child {node [w] at (-0.5, 1.0) {  }  
}
child {node [w] at (0.5, 1.0) {  }  
child {node [w] at (0.0, 1.0) {  }  
}
}
child {node [b] at (1.5, 1.0) {  }  
}
;
}}

\newcommand{\forestSC}{
\tikz[planar forest ] {

\node [b] at (0.0, 0.0) {  } 
child {node [b] at (-1.0, 1.0) {  }  
child {node [b] at (0.0, 1.0) {  }  
}
}
child {node [b] at (1.0, 1.0) {  }  
child {node [b] at (-1.0, 1.0) {  }  
child {node [e] at (0.0, 1.0) {  }  
}
}
child {node [b] at (0.0, 1.0) {  }  
}
child {node [b] at (1.0, 1.0) {  }  
}
}
;
}}

\newcommand{\forestTC}{
\tikz[planar forest ] {

\node [b] at (0.0, 0.0) {  } 
child {node [b] at (-0.5, 1.0) {  }  
child {node [b] at (0.0, 1.0) {  }  
}
}
child {node [e] at (0.5, 1.0) {  }  
}
;
}}

\newcommand{\forestUC}{
\tikz[planar forest ] {

\node [b] at (0.0, 0.0) {  } 
child {node [e] at (-1.0, 1.0) {  }  
}
child {node [b] at (0.0, 1.0) {  }  
}
child {node [b] at (1.0, 1.0) {  }  
}
;
}}

\newcommand{\forestVC}{
\tikz[planar forest ] {

\node [b] at (0.0, 0.0) {  } 
child {node [e] at (0.0, 1.0) {  }  
}
;
}}

\newcommand{\forestWC}{
\tikz[planar forest ] {

\node [b] at (0.0, 0.0) {  } 
child {node [b] at (-0.5, 1.0) {  }  
child {node [b] at (0.0, 1.0) {  }  
}
}
child {node [b] at (0.5, 1.0) {  }  
}
;
}}

\newcommand{\forestXC}{
\tikz[planar forest ] {

\node [b] at (0.0, 0.0) {  } 
child {node [e] at (-1.0, 1.0) {  }  
}
child {node [b] at (0.0, 1.0) {  }  
child {node [b] at (0.0, 1.0) {  }  
}
}
child {node [b] at (1.0, 1.0) {  }  
}
;
}}

\newcommand{\forestYC}{
\tikz[planar forest ] {

\node [b] at (0.0, 0.0) {  } 
child {node [b] at (-0.5, 1.0) {  }  
child {node [e] at (-0.5, 1.0) {  }  
}
child {node [b] at (0.5, 1.0) {  }  
}
}
child {node [b] at (0.5, 1.0) {  }  
}
;
}}

\newcommand{\forestAD}{
\tikz[planar forest ] {

\node [b] at (0.0, 0.0) {  } 
child {node [b] at (-0.5, 1.0) {  }  
child {node [b] at (0.0, 1.0) {  }  
child {node [e] at (0.0, 1.0) {  }  
}
}
}
child {node [b] at (0.5, 1.0) {  }  
}
;
}}

\newcommand{\forestBD}{
\tikz[planar forest ] {

\node [b] at (0.0, 0.0) {  } 
child {node [b] at (-0.5, 1.0) {  }  
child {node [b] at (0.0, 1.0) {  }  
}
}
child {node [b] at (0.5, 1.0) {  }  
child {node [e] at (0.0, 1.0) {  }  
}
}
;
}}

\newcommand{\forestCD}{
\tikz[planar forest ] {

\node [w] at (0.0, 0.0) {  } 
child {node [w] at (0.0, 1.0) {  }  
}
;
}}

\newcommand{\forestDD}{
\tikz[planar forest ] {

\node [b] at (0.0, 0.0) {  } 
child {node [e] at (-1.0, 1.0) {  }  
}
child {node [b] at (0.0, 1.0) {  }  
}
child {node [b] at (1.0, 1.0) {  }  
}
;
}}

\newcommand{\forestED}{
\tikz[planar forest ] {

\node [b] at (0.0, 0.0) {  } 
child {node [e] at (-1.0, 1.0) {  }  
}
child {node [b] at (0.0, 1.0) {  }  
child {node [w] at (0.0, 1.0) {  }  
child {node [w] at (0.0, 1.0) {  }  
}
}
}
child {node [b] at (1.0, 1.0) {  }  
}
;
}}

\newcommand{\forestFD}{
\tikz[planar forest ] {

\node [b] at (0.0, 0.0) {  } 
child {node [e] at (-1.0, 1.0) {  }  
}
child {node [b] at (0.0, 1.0) {  }  
}
child {node [b] at (1.0, 1.0) {  }  
child {node [w] at (0.0, 1.0) {  }  
child {node [w] at (0.0, 1.0) {  }  
}
}
}
;
}}

\newcommand{\forestGD}{
\tikz[planar forest ] {

\node [b] at (0.0, 0.0) {  } 
child {node [e] at (-1.5, 1.0) {  }  
}
child {node [b] at (-0.5, 1.0) {  }  
}
child {node [b] at (0.5, 1.0) {  }  
}
child {node [w] at (1.5, 1.0) {  }  
child {node [w] at (0.0, 1.0) {  }  
}
}
;
}}

\newcommand{\forestHD}{
\tikz[planar forest ] {

\node [l] at (0.0, 0.0) { $\times$ } 
;
}}

\newcommand{\forestID}{
\tikz[planar forest ] {

\node [b] at (0.0, 0.0) {  } 
child {node [l] at (0.0, 1.0) { $\times$ }  
}
;
}}

\newcommand{\forestJD}{
\tikz[planar forest ] {

\node [b] at (0.0, 0.0) {  } 
child {node [b] at (-0.5, 1.0) {  }  
}
child {node [l] at (0.5, 1.0) { $\times$ }  
}
;
}}

\newcommand{\forestKD}{
\tikz[planar forest ] {

\node [b] at (0.0, 0.0) {  } 
child {node [l] at (-0.5, 1.0) { $\times$ }  
}
child {node [b] at (0.5, 1.0) {  }  
}
;
}}

\newcommand{\forestLD}{
\tikz[planar forest ] {

\node [b] at (0.0, 0.0) {  } 
child {node [b] at (-0.5, 1.0) {  }  
}
child {node [l] at (0.5, 1.0) { $\times$ }  
}
;
}}

\newcommand{\forestMD}{
\tikz[planar forest ] {

\node [b] at (0.0, 0.0) {  } 
child {node [l] at (-0.5, 1.0) { $\times$ }  
}
child {node [b] at (0.5, 1.0) {  }  
}
;
}}

\newcommand{\forestND}{
\tikz[planar forest ] {

\node [b] at (0.0, 0.0) {  } 
child {node [l] at (0.0, 1.0) { $\times$ }  
}
;
}}

\newcommand{\forestOD}{
\tikz[planar forest ] {

\node [b] at (0.0, 0.0) {  } 
child {node [l] at (-0.5, 1.0) { $\times$ }  
}
child {node [b] at (0.5, 1.0) {  }  
}
;
}}

\newcommand{\forestPD}{
\tikz[planar forest ] {

\node [b] at (0.0, 0.0) {  } 
child {node [l] at (0.0, 1.0) { $\times$ }  
}
;
}}

\newcommand{\forestQD}{
\tikz[planar forest ] {

\node [b] at (0.0, 0.0) {  } 
child {node [b] at (-0.5, 1.0) {  }  
}
child {node [l] at (0.5, 1.0) { $\times$ }  
}
;
}}

\newcommand{\forestRD}{
\tikz[planar forest ] {

\node [l] at (0.0, 0.0) { $\times$ } 
;
}}

\newcommand{\forestSD}{
\tikz[planar forest ] {

\node [b] at (0.0, 0.0) {  } 
child {node [l] at (-0.5, 1.0) { $\times$ }  
}
child {node [e] at (0.5, 1.0) {  }  
}
;
}}

\newcommand{\forestTD}{
\tikz[planar forest ] {

\node [b] at (0.0, 0.0) {  } 
child {node [b] at (-0.5, 1.0) {  }  
}
child {node [l] at (0.5, 1.0) { $\times$ }  
}
;
}}

\newcommand{\forestUD}{
\tikz[planar forest ] {

\node [b] at (0.0, 0.0) {  } 
child {node [l] at (-0.5, 1.0) { $\times$ }  
}
child {node [b] at (0.5, 1.0) {  }  
}
;
}}

\newcommand{\forestVD}{
\tikz[planar forest ] {

\node [w] at (0.0, 0.0) {  } 
child {node [w] at (0.0, 1.0) {  }  
}
;
}}

\newcommand{\forestWD}{
\tikz[planar forest ] {

\node [b] at (0.0, 0.0) {  } 
child {node [l] at (-0.5, 1.0) { $\times$ }  
}
child {node [w] at (0.5, 1.0) {  }  
child {node [w] at (0.0, 1.0) {  }  
}
}
;
}}

\newcommand{\forestXD}{
\tikz[planar forest ] {

\node [b] at (0.0, 0.0) {  } 
child {node [b] at (-0.5, 1.0) {  }  
}
child {node [l] at (0.5, 1.0) { $\times$ }  
}
;
}}

\newcommand{\forestYD}{
\tikz[planar forest ] {

\node [b] at (0.0, 0.0) {  } 
child {node [l] at (-0.5, 1.0) { $\times$ }  
}
child {node [b] at (0.5, 1.0) {  }  
}
;
}}

\newcommand{\forestAE}{
\tikz[planar forest ] {

\node [b] at (0.0, 0.0) {  } 
child {node [l] at (0.0, 1.0) { $\times$ }  
}
;
}}

\newcommand{\forestBE}{
\tikz[planar forest ] {

\node [b] at (0.0, 0.0) {  } 
child {node [e] at (-0.5, 1.0) {  }  
}
child {node [l] at (0.5, 1.0) { $\times$ }  
}
;
}}

\newcommand{\forestCE}{
\tikz[planar forest ] {

\node [b] at (0.0, 0.0) {  } 
;
}}

\newcommand{\forestDE}{
\tikz[planar forest ] {

\node [b] at (0.0, 0.0) {  } 
child {node [e] at (0.0, 1.0) {  }  
}
;
}}

\newcommand{\forestEE}{
\tikz[planar forest ] {

\node [b] at (0.0, 0.0) {  } 
child {node [l] at (0.0, 1.0) { $\times$ }  
}
;
}}

\newcommand{\forestFE}{
\tikz[planar forest ] {

\node [b] at (0.0, 0.0) {  } 
child {node [b] at (0.0, 1.0) {  }  
}
;
}}

\newcommand{\forestGE}{
\tikz[planar forest ] {

\node [b] at (0.0, 0.0) {  } 
child {node [b] at (0.0, 1.0) {  }  
child {node [e] at (0.0, 1.0) {  }  
}
}
;
}}

\newcommand{\forestHE}{
\tikz[planar forest ] {

\node [b] at (0.0, 0.0) {  } 
child {node [e] at (-0.5, 1.0) {  }  
}
child {node [b] at (0.5, 1.0) {  }  
}
;
}}

\newcommand{\forestIE}{
\tikz[planar forest ] {

\node [b] at (0.0, 0.0) {  } 
child {node [l] at (0.0, 1.0) { $\times$ }  
}
;
}}

\newcommand{\forestJE}{
\tikz[planar forest ] {

\node [b] at (0.0, 0.0) {  } 
child {node [l] at (0.0, 1.0) { $\times$ }  
}
;
}}

\newcommand{\forestKE}{
\tikz[planar forest ] {

\node [b] at (0.0, 0.0) {  } 
child {node [l] at (-0.5, 1.0) { $\times$ }  
}
child {node [b] at (0.5, 1.0) {  }  
}
;
}}

\newcommand{\forestLE}{
\tikz[planar forest ] {

\node [b] at (0.0, 0.0) {  } 
child {node [l] at (0.0, 1.0) { $\times$ }  
}
;
}}

\newcommand{\forestME}{
\tikz[planar forest ] {

\node [b] at (0.0, 0.0) {  } 
;
}}

\newcommand{\forestNE}{
\tikz[planar forest ] {

\node [b] at (0.0, 0.0) {  } 
child {node [l] at (0.0, 1.0) { $\times$ }  
}
;
}}

\newcommand{\forestOE}{
\tikz[planar forest ] {

\node [b] at (0.0, 0.0) {  } 
child {node [e] at (0.0, 1.0) {  }  
}
;
}}

\newcommand{\forestPE}{
\tikz[planar forest ] {

\node [b] at (0.0, 0.0) {  } 
child {node [e] at (-0.5, 1.0) {  }  
}
child {node [l] at (0.5, 1.0) { $\times$ }  
}
;
}}

\newcommand{\forestQE}{
\tikz[planar forest ] {

\node [b] at (0.0, 0.0) {  } 
;
}}

\newcommand{\forestRE}{
\tikz[planar forest ] {

\node [b] at (0.0, 0.0) {  } 
child {node [l] at (0.0, 1.0) { $\times$ }  
}
;
}}

\newcommand{\forestSE}{
\tikz[planar forest ] {

\node [b] at (0.0, 0.0) {  } 
child {node [l] at (0.0, 1.0) { $\times$ }  
}
;
}}

\newcommand{\forestTE}{
\tikz[planar forest ] {

\node [b] at (0.0, 0.0) {  } 
child {node [b] at (-0.5, 1.0) {  }  
}
child {node [l] at (0.5, 1.0) { $\times$ }  
}
;
}}

\newcommand{\forestUE}{
\tikz[planar forest ] {

\node [b] at (0.0, 0.0) {  } 
child {node [b] at (0.0, 1.0) {  }  
child {node [b] at (0.0, 1.0) {  }  
}
}
;
}}

\newcommand{\forestVE}{
\tikz[planar forest ] {

\node [b] at (0.0, 0.0) {  } 
child {node [b] at (0.0, 1.0) {  }  
child {node [b] at (0.0, 1.0) {  }  
child {node [e] at (0.0, 1.0) {  }  
}
}
}
;
}}

\newcommand{\forestWE}{
\tikz[planar forest ] {

\node [b] at (0.0, 0.0) {  } 
child {node [b] at (0.0, 1.0) {  }  
child {node [e] at (-0.5, 1.0) {  }  
}
child {node [b] at (0.5, 1.0) {  }  
}
}
;
}}

\newcommand{\forestXE}{
\tikz[planar forest ] {

\node [b] at (0.0, 0.0) {  } 
child {node [e] at (-0.5, 1.0) {  }  
}
child {node [b] at (0.5, 1.0) {  }  
child {node [b] at (0.0, 1.0) {  }  
}
}
;
}}

\newcommand{\forestYE}{
\tikz[planar forest ] {

\node [b] at (0.0, 0.0) {  } 
child {node [l] at (0.0, 1.0) { $\times$ }  
}
;
}}

\newcommand{\forestAF}{
\tikz[planar forest ] {

\node [b] at (0.0, 0.0) {  } 
child {node [l] at (0.0, 1.0) { $\times$ }  
}
;
}}

\newcommand{\forestBF}{
\tikz[planar forest ] {

\node [b] at (0.0, 0.0) {  } 
child {node [l] at (0.0, 1.0) { $\times$ }  
}
;
}}

\newcommand{\forestCF}{
\tikz[planar forest ] {

\node [b] at (0.0, 0.0) {  } 
child {node [l] at (0.0, 1.0) { $\times$ }  
}
;
}}

\newcommand{\forestDF}{
\tikz[planar forest ] {

\node [b] at (0.0, 0.0) {  } 
child {node [l] at (-0.5, 1.0) { $\times$ }  
}
child {node [b] at (0.5, 1.0) {  }  
}
;
}}

\newcommand{\forestEF}{
\tikz[planar forest ] {

\node [b] at (0.0, 0.0) {  } 
child {node [l] at (-0.5, 1.0) { $\times$ }  
}
child {node [b] at (0.5, 1.0) {  }  
child {node [b] at (0.0, 1.0) {  }  
}
}
;
}}

\newcommand{\forestFF}{
\tikz[planar forest ] {

\node [b] at (0.0, 0.0) {  } 
child {node [b] at (-0.5, 1.0) {  }  
}
child {node [b] at (0.5, 1.0) {  }  
}
;
}}

\newcommand{\forestGF}{
\tikz[planar forest ] {

\node [b] at (0.0, 0.0) {  } 
child {node [b] at (-0.5, 1.0) {  }  
child {node [e] at (0.0, 1.0) {  }  
}
}
child {node [b] at (0.5, 1.0) {  }  
}
;
}}

\newcommand{\forestHF}{
\tikz[planar forest ] {

\node [b] at (0.0, 0.0) {  } 
child {node [b] at (-0.5, 1.0) {  }  
}
child {node [b] at (0.5, 1.0) {  }  
child {node [e] at (0.0, 1.0) {  }  
}
}
;
}}

\newcommand{\forestIF}{
\tikz[planar forest ] {

\node [b] at (0.0, 0.0) {  } 
child {node [e] at (-1.0, 1.0) {  }  
}
child {node [b] at (0.0, 1.0) {  }  
}
child {node [b] at (1.0, 1.0) {  }  
}
;
}}

\newcommand{\forestJF}{
\tikz[planar forest ] {

\node [b] at (0.0, 0.0) {  } 
child {node [l] at (-0.5, 1.0) { $\times$ }  
}
child {node [b] at (0.5, 1.0) {  }  
}
;
}}

\newcommand{\forestKF}{
\tikz[planar forest ] {

\node [b] at (0.0, 0.0) {  } 
child {node [l] at (0.0, 1.0) { $\times$ }  
}
;
}}

\newcommand{\forestLF}{
\tikz[planar forest ] {

\node [b] at (0.0, 0.0) {  } 
child {node [b] at (-0.5, 1.0) {  }  
}
child {node [l] at (0.5, 1.0) { $\times$ }  
}
;
}}

\newcommand{\forestMF}{
\tikz[planar forest ] {

\node [b] at (0.0, 0.0) {  } 
child {node [l] at (0.0, 1.0) { $\times$ }  
}
;
}}

\newcommand{\forestNF}{
\tikz[planar forest ] {

\node [b] at (0.0, 0.0) {  } 
child {node [l] at (-1.0, 1.0) { $\times$ }  
}
child {node [b] at (0.0, 1.0) {  }  
}
child {node [b] at (1.0, 1.0) {  }  
}
;
}}

\newcommand{\forestOF}{
\tikz[planar forest ] {

\node [b] at (0.0, 0.0) {  } 
child {node [l] at (0.0, 1.0) { $\times$ }  
}
;
}}

\newcommand{\forestPF}{
\tikz[planar forest ] {

\node [b] at (0.0, 0.0) {  } 
child {node [b] at (0.0, 1.0) {  }  
}
;
}}

\newcommand{\forestQF}{
\tikz[planar forest ] {

\node [b] at (0.0, 0.0) {  } 
child {node [l] at (0.0, 1.0) { $\times$ }  
}
;
}}

\newcommand{\forestRF}{
\tikz[planar forest ] {

\node [b] at (0.0, 0.0) {  } 
child {node [b] at (0.0, 1.0) {  }  
child {node [e] at (0.0, 1.0) {  }  
}
}
;
}}

\newcommand{\forestSF}{
\tikz[planar forest ] {

\node [b] at (0.0, 0.0) {  } 
child {node [l] at (0.0, 1.0) { $\times$ }  
}
;
}}

\newcommand{\forestTF}{
\tikz[planar forest ] {

\node [b] at (0.0, 0.0) {  } 
child {node [e] at (-0.5, 1.0) {  }  
}
child {node [b] at (0.5, 1.0) {  }  
}
;
}}

\newcommand{\forestUF}{
\tikz[planar forest ] {

\node [b] at (0.0, 0.0) {  } 
child {node [e] at (-0.5, 1.0) {  }  
}
child {node [l] at (0.5, 1.0) { $\times$ }  
}
;
}}

\newcommand{\forestVF}{
\tikz[planar forest ] {

\node [b] at (0.0, 0.0) {  } 
child {node [b] at (0.0, 1.0) {  }  
}
;
}}

\newcommand{\forestWF}{
\tikz[planar forest ] {

\node [b] at (0.0, 0.0) {  } 
child {node [l] at (0.0, 1.0) { $\times$ }  
}
;
}}

\newcommand{\forestXF}{
\tikz[planar forest ] {

\node [b] at (0.0, 0.0) {  } 
child {node [l] at (0.0, 1.0) { $\times$ }  
}
;
}}

\newcommand{\forestYF}{
\tikz[planar forest ] {

\node [b] at (0.0, 0.0) {  } 
child {node [l] at (0.0, 1.0) { $\times$ }  
}
;
}}

\newcommand{\forestAG}{
\tikz[planar forest ] {

\node [b] at (0.0, 0.0) {  } 
child {node [l] at (0.0, 1.0) { $\times$ }  
}
;
}}

\newcommand{\forestBG}{
\tikz[planar forest ] {

\node [b] at (0.0, 0.0) {  } 
child {node [l] at (-0.5, 1.0) { $\times$ }  
}
child {node [b] at (0.5, 1.0) {  }  
}
;
}}

\newcommand{\forestCG}{
\tikz[planar forest ] {

\node [b] at (0.0, 0.0) {  } 
child {node [b] at (-0.5, 1.0) {  }  
child {node [b] at (0.0, 1.0) {  }  
}
}
child {node [l] at (0.5, 1.0) { $\times$ }  
}
;
}}

\newcommand{\forestDG}{
\tikz[planar forest ] {

\node [b] at (0.0, 0.0) {  } 
child {node [l] at (-0.5, 1.0) { $\times$ }  
}
child {node [b] at (0.5, 1.0) {  }  
}
;
}}

\newcommand{\forestEG}{
\tikz[planar forest ] {

\node [b] at (0.0, 0.0) {  } 
;
}}

\newcommand{\forestFG}{
\tikz[planar forest ] {

\node [b] at (0.0, 0.0) {  } 
child {node [l] at (-0.5, 1.0) { $\times$ }  
}
child {node [b] at (0.5, 1.0) {  }  
}
;
}}

\newcommand{\forestGG}{
\tikz[planar forest ] {

\node [b] at (0.0, 0.0) {  } 
child {node [e] at (0.0, 1.0) {  }  
}
;
}}

\newcommand{\forestHG}{
\tikz[planar forest ] {

\node [b] at (0.0, 0.0) {  } 
child {node [l] at (-0.5, 1.0) { $\times$ }  
}
child {node [b] at (0.5, 1.0) {  }  
child {node [e] at (0.0, 1.0) {  }  
}
}
;
}}

\newcommand{\forestIG}{
\tikz[planar forest ] {

\node [b] at (0.0, 0.0) {  } 
;
}}

\newcommand{\forestJG}{
\tikz[planar forest ] {

\node [b] at (0.0, 0.0) {  } 
child {node [e] at (-1.0, 1.0) {  }  
}
child {node [l] at (0.0, 1.0) { $\times$ }  
}
child {node [b] at (1.0, 1.0) {  }  
}
;
}}

\newcommand{\forestKG}{
\tikz[planar forest ] {

\node [b] at (0.0, 0.0) {  } 
;
}}

\newcommand{\forestLG}{
\tikz[planar forest ] {

\node [b] at (0.0, 0.0) {  } 
child {node [l] at (-0.5, 1.0) { $\times$ }  
}
child {node [b] at (0.5, 1.0) {  }  
}
;
}}

\newcommand{\forestMG}{
\tikz[planar forest ] {

\node [b] at (0.0, 0.0) {  } 
child {node [l] at (0.0, 1.0) { $\times$ }  
}
;
}}

\newcommand{\forestNG}{
\tikz[planar forest ] {

\node [b] at (0.0, 0.0) {  } 
child {node [l] at (-0.5, 1.0) { $\times$ }  
}
child {node [b] at (0.5, 1.0) {  }  
child {node [b] at (0.0, 1.0) {  }  
}
}
;
}}

\newcommand{\forestOG}{
\tikz[planar forest ] {

\node [b] at (0.0, 0.0) {  } 
child {node [b] at (-1.0, 1.0) {  }  
}
child {node [l] at (0.0, 1.0) { $\times$ }  
}
child {node [b] at (1.0, 1.0) {  }  
}
;
}}

\newcommand{\forestPG}{
\tikz[planar forest ] {

\node [b] at (0.0, 0.0) {  } 
child {node [b] at (-0.5, 1.0) {  }  
}
child {node [l] at (0.5, 1.0) { $\times$ }  
}
;
}}

\newcommand{\forestQG}{
\tikz[planar forest ] {

\node [b] at (0.0, 0.0) {  } 
;
}}

\newcommand{\forestRG}{
\tikz[planar forest ] {

\node [b] at (0.0, 0.0) {  } 
child {node [b] at (-0.5, 1.0) {  }  
}
child {node [l] at (0.5, 1.0) { $\times$ }  
}
;
}}

\newcommand{\forestSG}{
\tikz[planar forest ] {

\node [b] at (0.0, 0.0) {  } 
child {node [e] at (0.0, 1.0) {  }  
}
;
}}

\newcommand{\forestTG}{
\tikz[planar forest ] {

\node [b] at (0.0, 0.0) {  } 
child {node [b] at (-0.5, 1.0) {  }  
child {node [e] at (0.0, 1.0) {  }  
}
}
child {node [l] at (0.5, 1.0) { $\times$ }  
}
;
}}

\newcommand{\forestUG}{
\tikz[planar forest ] {

\node [b] at (0.0, 0.0) {  } 
;
}}

\newcommand{\forestVG}{
\tikz[planar forest ] {

\node [b] at (0.0, 0.0) {  } 
child {node [e] at (-1.0, 1.0) {  }  
}
child {node [b] at (0.0, 1.0) {  }  
}
child {node [l] at (1.0, 1.0) { $\times$ }  
}
;
}}

\newcommand{\forestWG}{
\tikz[planar forest ] {

\node [b] at (0.0, 0.0) {  } 
;
}}

\newcommand{\forestXG}{
\tikz[planar forest ] {

\node [b] at (0.0, 0.0) {  } 
child {node [b] at (-0.5, 1.0) {  }  
}
child {node [l] at (0.5, 1.0) { $\times$ }  
}
;
}}

\newcommand{\forestYG}{
\tikz[planar forest ] {

\node [b] at (0.0, 0.0) {  } 
child {node [l] at (0.0, 1.0) { $\times$ }  
}
;
}}

\newcommand{\forestAH}{
\tikz[planar forest ] {

\node [b] at (0.0, 0.0) {  } 
child {node [b] at (-0.5, 1.0) {  }  
child {node [b] at (0.0, 1.0) {  }  
}
}
child {node [l] at (0.5, 1.0) { $\times$ }  
}
;
}}

\newcommand{\forestBH}{
\tikz[planar forest ] {

\node [b] at (0.0, 0.0) {  } 
child {node [b] at (-1.0, 1.0) {  }  
}
child {node [b] at (0.0, 1.0) {  }  
}
child {node [l] at (1.0, 1.0) { $\times$ }  
}
;
}}

\newcommand{\forestCH}{
\tikz[planar forest ] {

\node [b] at (0.0, 0.0) {  } 
child {node [l] at (0.0, 1.0) { $\times$ }  
}
;
}}

\newcommand{\forestDH}{
\tikz[planar forest ] {

\node [b] at (0.0, 0.0) {  } 
child {node [l] at (0.0, 1.0) { $\times$ }  
}
;
}}

\newcommand{\forestEH}{
\tikz[planar forest ] {

\node [b] at (0.0, 0.0) {  } 
;
}}

\newcommand{\forestFH}{
\tikz[planar forest ] {

\node [b] at (0.0, 0.0) {  } 
child {node [l] at (0.0, 1.0) { $\times$ }  
}
;
}}

\newcommand{\forestGH}{
\tikz[planar forest ] {

\node [b] at (0.0, 0.0) {  } 
child {node [l] at (0.0, 1.0) { $\times$ }  
}
;
}}

\newcommand{\forestHH}{
\tikz[planar forest ] {

\node [b] at (0.0, 0.0) {  } 
child {node [e] at (0.0, 1.0) {  }  
}
;
}}

\newcommand{\forestIH}{
\tikz[planar forest ] {

\node [b] at (0.0, 0.0) {  } 
child {node [l] at (0.0, 1.0) { $\times$ }  
}
;
}}

\newcommand{\forestJH}{
\tikz[planar forest ] {

\node [b] at (0.0, 0.0) {  } 
child {node [e] at (-0.5, 1.0) {  }  
}
child {node [l] at (0.5, 1.0) { $\times$ }  
}
;
}}

\newcommand{\forestKH}{
\tikz[planar forest ] {

\node [b] at (0.0, 0.0) {  } 
;
}}

\newcommand{\forestLH}{
\tikz[planar forest ] {

\node [b] at (0.0, 0.0) {  } 
child {node [l] at (0.0, 1.0) { $\times$ }  
}
;
}}

\newcommand{\forestMH}{
\tikz[planar forest ] {

\node [b] at (0.0, 0.0) {  } 
child {node [l] at (0.0, 1.0) { $\times$ }  
}
;
}}

\newcommand{\forestNH}{
\tikz[planar forest ] {

\node [b] at (0.0, 0.0) {  } 
child {node [l] at (0.0, 1.0) { $\times$ }  
}
;
}}

\newcommand{\forestOH}{
\tikz[planar forest ] {

\node [b] at (0.0, 0.0) {  } 
child {node [l] at (0.0, 1.0) { $\times$ }  
}
;
}}

\newcommand{\forestPH}{
\tikz[planar forest ] {

\node [b] at (0.0, 0.0) {  } 
child {node [b] at (-0.5, 1.0) {  }  
}
child {node [l] at (0.5, 1.0) { $\times$ }  
}
;
}}

\newcommand{\forestQH}{
\tikz[planar forest ] {

\node [b] at (0.0, 0.0) {  } 
child {node [l] at (0.0, 1.0) { $\times$ }  
}
;
}}

\newcommand{\forestRH}{
\tikz[planar forest ] {

\node [b] at (0.0, 0.0) {  } 
child {node [l] at (0.0, 1.0) { $\times$ }  
}
;
}}

\newcommand{\forestSH}{
\tikz[planar forest ] {

\node [b] at (0.0, 0.0) {  } 
;
}}

\newcommand{\forestTH}{
\tikz[planar forest ] {

\node [b] at (0.0, 0.0) {  } 
child {node [l] at (0.0, 1.0) { $\times$ }  
}
;
}}

\newcommand{\forestUH}{
\tikz[planar forest ] {

\node [b] at (0.0, 0.0) {  } 
child {node [l] at (0.0, 1.0) { $\times$ }  
}
;
}}

\newcommand{\forestVH}{
\tikz[planar forest ] {

\node [b] at (0.0, 0.0) {  } 
child {node [e] at (0.0, 1.0) {  }  
}
;
}}

\newcommand{\forestWH}{
\tikz[planar forest ] {

\node [b] at (0.0, 0.0) {  } 
child {node [l] at (0.0, 1.0) { $\times$ }  
}
;
}}

\newcommand{\forestXH}{
\tikz[planar forest ] {

\node [b] at (0.0, 0.0) {  } 
child {node [e] at (-0.5, 1.0) {  }  
}
child {node [l] at (0.5, 1.0) { $\times$ }  
}
;
}}

\newcommand{\forestYH}{
\tikz[planar forest ] {

\node [b] at (0.0, 0.0) {  } 
;
}}

\newcommand{\forestAI}{
\tikz[planar forest ] {

\node [b] at (0.0, 0.0) {  } 
child {node [l] at (0.0, 1.0) { $\times$ }  
}
;
}}

\newcommand{\forestBI}{
\tikz[planar forest ] {

\node [b] at (0.0, 0.0) {  } 
child {node [l] at (0.0, 1.0) { $\times$ }  
}
;
}}

\newcommand{\forestCI}{
\tikz[planar forest ] {

\node [b] at (0.0, 0.0) {  } 
child {node [l] at (0.0, 1.0) { $\times$ }  
}
;
}}

\newcommand{\forestDI}{
\tikz[planar forest ] {

\node [b] at (0.0, 0.0) {  } 
child {node [l] at (0.0, 1.0) { $\times$ }  
}
;
}}

\newcommand{\forestEI}{
\tikz[planar forest ] {

\node [b] at (0.0, 0.0) {  } 
child {node [b] at (-0.5, 1.0) {  }  
}
child {node [l] at (0.5, 1.0) { $\times$ }  
}
;
}}

\newcommand{\forestFI}{
\tikz[planar forest ] {

\node [b] at (0.0, 0.0) {  } 
;
}}

\newcommand{\forestGI}{
\tikz[planar forest ] {

\node [b] at (0.0, 0.0) {  } 
child {node [b] at (0.0, 1.0) {  }  
}
;
}}

\newcommand{\forestHI}{
\tikz[planar forest ] {

\node [b] at (0.0, 0.0) {  } 
;
}}

\newcommand{\forestII}{
\tikz[planar forest ] {

\node [b] at (0.0, 0.0) {  } 
child {node [b] at (0.0, 1.0) {  }  
child {node [e] at (0.0, 1.0) {  }  
}
}
;
}}

\newcommand{\forestJI}{
\tikz[planar forest ] {

\node [b] at (0.0, 0.0) {  } 
;
}}

\newcommand{\forestKI}{
\tikz[planar forest ] {

\node [b] at (0.0, 0.0) {  } 
child {node [e] at (-0.5, 1.0) {  }  
}
child {node [b] at (0.5, 1.0) {  }  
}
;
}}

\newcommand{\forestLI}{
\tikz[planar forest ] {

\node [b] at (0.0, 0.0) {  } 
child {node [e] at (0.0, 1.0) {  }  
}
;
}}

\newcommand{\forestMI}{
\tikz[planar forest ] {

\node [b] at (0.0, 0.0) {  } 
child {node [b] at (0.0, 1.0) {  }  
}
;
}}

\newcommand{\forestNI}{
\tikz[planar forest ] {

\node [b] at (0.0, 0.0) {  } 
child {node [l] at (0.0, 1.0) { $\times$ }  
}
;
}}

\newcommand{\forestOI}{
\tikz[planar forest ] {

\node [b] at (0.0, 0.0) {  } 
child {node [b] at (-0.5, 1.0) {  }  
}
child {node [l] at (0.5, 1.0) { $\times$ }  
}
;
}}

\newcommand{\forestPI}{
\tikz[planar forest ] {

\node [b] at (0.0, 0.0) {  } 
child {node [l] at (0.0, 1.0) { $\times$ }  
}
;
}}

\newcommand{\forestQI}{
\tikz[planar forest ] {

\node [b] at (0.0, 0.0) {  } 
child {node [l] at (-0.5, 1.0) { $\times$ }  
}
child {node [b] at (0.5, 1.0) {  }  
}
;
}}

\newcommand{\forestRI}{
\tikz[planar forest ] {

\node [b] at (0.0, 0.0) {  } 
child {node [b] at (-1.0, 1.0) {  }  
}
child {node [l] at (0.0, 1.0) { $\times$ }  
}
child {node [b] at (1.0, 1.0) {  }  
}
;
}}

\newcommand{\forestSI}{
\tikz[planar forest ] {

\node [b] at (0.0, 0.0) {  } 
child {node [l] at (-1.0, 1.0) { $\times$ }  
}
child {node [b] at (0.0, 1.0) {  }  
}
child {node [b] at (1.0, 1.0) {  }  
}
;
}}

\newcommand{\forestTI}{
\tikz[planar forest ] {

\node [b] at (0.0, 0.0) {  } 
child {node [l] at (-0.5, 1.0) { $\times$ }  
}
child {node [b] at (0.5, 1.0) {  }  
child {node [b] at (0.0, 1.0) {  }  
}
}
;
}}

\newcommand{\forestUI}{
\tikz[planar forest ] {

\node [b] at (0.0, 0.0) {  } 
child {node [b] at (-0.5, 1.0) {  }  
child {node [b] at (0.0, 1.0) {  }  
}
}
child {node [l] at (0.5, 1.0) { $\times$ }  
}
;
}}

\newcommand{\forestVI}{
\tikz[planar forest ] {

\node [b] at (0.0, 0.0) {  } 
;
}}

\newcommand{\forestWI}{
\tikz[planar forest ] {

\node [b] at (0.0, 0.0) {  } 
;
}}

\newcommand{\forestXI}{
\tikz[planar forest ] {

\node [l] at (0.0, 0.0) { $\times$ } 
;
}}

\newcommand{\forestYI}{
\tikz[planar forest ] {

\node [b] at (0.0, 0.0) {  } 
;
}}

\newcommand{\forestAJ}{
\tikz[planar forest ] {

\node [b] at (0.0, 0.0) {  } 
;
}}

\newcommand{\forestBJ}{
\tikz[planar forest ] {

\node [b] at (0.0, 0.0) {  } 
;
}}

\newcommand{\forestCJ}{
\tikz[planar forest ] {

\node [b] at (0.0, 0.0) {  } 
;
}}

\newcommand{\forestDJ}{
\tikz[planar forest ] {

\node [b] at (0.0, 0.0) {  } 
child {node [b] at (0.0, 1.0) {  }  
}
;
}}

\newcommand{\forestEJ}{
\tikz[planar forest ] {

\node [b] at (0.0, 0.0) {  } 
child {node [b] at (0.0, 1.0) {  }  
child {node [b] at (0.0, 1.0) {  }  
}
}
;
}}

\newcommand{\forestFJ}{
\tikz[planar forest ] {

\node [b] at (0.0, 0.0) {  } 
child {node [b] at (-0.5, 1.0) {  }  
}
child {node [b] at (0.5, 1.0) {  }  
}
;
}}

\newcommand{\forestGJ}{
\tikz[planar forest ] {

\node [b] at (0.0, 0.0) {  } 
;
}}

\newcommand{\forestHJ}{
\tikz[planar forest ] {

\node [b] at (0.0, 0.0) {  } 
child {node [b] at (0.0, 1.0) {  }  
}
;
}}

\newcommand{\forestIJ}{
\tikz[planar forest ] {

\node [b] at (0.0, 0.0) {  } 
child {node [b] at (-0.5, 1.0) {  }  
}
child {node [b] at (0.5, 1.0) {  }  
}
;
}}

\newcommand{\forestJJ}{
\tikz[planar forest ] {

\node [b] at (0.0, 0.0) {  } 
child {node [l] at (0.0, 1.0) { $\times$ }  
}
;
}}

\newcommand{\forestKJ}{
\tikz[planar forest ] {

\node [b] at (0.0, 0.0) {  } 
child {node [b] at (0.0, 1.0) {  }  
}
;
}}

\newcommand{\forestLJ}{
\tikz[planar forest ] {

\node [b] at (0.0, 0.0) {  } 
child {node [l] at (-0.5, 1.0) { $\times$ }  
}
child {node [b] at (0.5, 1.0) {  }  
}
;
}}

\newcommand{\forestMJ}{
\tikz[planar forest ] {

\node [b] at (0.0, 0.0) {  } 
;
}}

\newcommand{\forestNJ}{
\tikz[planar forest ] {

\node [b] at (0.0, 0.0) {  } 
child {node [l] at (0.0, 1.0) { $\times$ }  
}
;
}}

\newcommand{\forestOJ}{
\tikz[planar forest ] {

\node [b] at (0.0, 0.0) {  } 
child {node [l] at (0.0, 1.0) { $\times$ }  
}
;
}}

\newcommand{\forestPJ}{
\tikz[planar forest ] {

\node [b] at (0.0, 0.0) {  } 
;
}}

\newcommand{\forestQJ}{
\tikz[planar forest ] {

\node [b] at (0.0, 0.0) {  } 
;
}}

\newcommand{\forestRJ}{
\tikz[planar forest ] {

\node [b] at (0.0, 0.0) {  } 
child {node [b] at (0.0, 1.0) {  }  
}
;
}}

\newcommand{\forestSJ}{
\tikz[planar forest ] {

\node [b] at (0.0, 0.0) {  } 
child {node [b] at (0.0, 1.0) {  }  
child {node [b] at (-0.5, 1.0) {  }  
}
child {node [b] at (0.5, 1.0) {  }  
}
}
;
}}

\newcommand{\forestTJ}{
\tikz[planar forest ] {

\node [b] at (0.0, 0.0) {  } 
child {node [b] at (-0.5, 1.0) {  }  
}
child {node [b] at (0.5, 1.0) {  }  
child {node [b] at (0.0, 1.0) {  }  
}
}
;
}}

\newcommand{\forestUJ}{
\tikz[planar forest ] {

\node [b] at (0.0, 0.0) {  } 
child {node [l] at (0.0, 1.0) { $\times$ }  
}
;
}}

\newcommand{\forestVJ}{
\tikz[planar forest ] {

\node [b] at (0.0, 0.0) {  } 
child {node [b] at (0.0, 1.0) {  }  
child {node [b] at (0.0, 1.0) {  }  
}
}
;
}}

\newcommand{\forestWJ}{
\tikz[planar forest ] {

\node [b] at (0.0, 0.0) {  } 
child {node [l] at (-0.5, 1.0) { $\times$ }  
}
child {node [b] at (0.5, 1.0) {  }  
child {node [b] at (0.0, 1.0) {  }  
}
}
;
}}

\newcommand{\forestXJ}{
\tikz[planar forest ] {

\node [b] at (0.0, 0.0) {  } 
;
}}

\newcommand{\forestYJ}{
\tikz[planar forest ] {

\node [b] at (0.0, 0.0) {  } 
child {node [l] at (0.0, 1.0) { $\times$ }  
}
;
}}

\newcommand{\forestAK}{
\tikz[planar forest ] {

\node [b] at (0.0, 0.0) {  } 
child {node [l] at (-0.5, 1.0) { $\times$ }  
}
child {node [b] at (0.5, 1.0) {  }  
}
;
}}

\newcommand{\forestBK}{
\tikz[planar forest ] {

\node [b] at (0.0, 0.0) {  } 
;
}}

\newcommand{\forestCK}{
\tikz[planar forest ] {

\node [b] at (0.0, 0.0) {  } 
child {node [l] at (0.0, 1.0) { $\times$ }  
}
;
}}

\newcommand{\forestDK}{
\tikz[planar forest ] {

\node [b] at (0.0, 0.0) {  } 
child {node [l] at (0.0, 1.0) { $\times$ }  
}
;
}}

\newcommand{\forestEK}{
\tikz[planar forest ] {

\node [b] at (0.0, 0.0) {  } 
child {node [l] at (0.0, 1.0) { $\times$ }  
}
;
}}

\newcommand{\forestFK}{
\tikz[planar forest ] {

\node [b] at (0.0, 0.0) {  } 
;
}}

\newcommand{\forestGK}{
\tikz[planar forest ] {

\node [b] at (0.0, 0.0) {  } 
;
}}

\newcommand{\forestHK}{
\tikz[planar forest ] {

\node [b] at (0.0, 0.0) {  } 
child {node [b] at (0.0, 1.0) {  }  
child {node [b] at (0.0, 1.0) {  }  
}
}
;
}}

\newcommand{\forestIK}{
\tikz[planar forest ] {

\node [b] at (0.0, 0.0) {  } 
child {node [b] at (-0.5, 1.0) {  }  
}
child {node [b] at (0.5, 1.0) {  }  
child {node [b] at (0.0, 1.0) {  }  
}
}
;
}}

\newcommand{\forestJK}{
\tikz[planar forest ] {

\node [b] at (0.0, 0.0) {  } 
child {node [b] at (-0.5, 1.0) {  }  
child {node [b] at (0.0, 1.0) {  }  
}
}
child {node [b] at (0.5, 1.0) {  }  
}
;
}}

\newcommand{\forestKK}{
\tikz[planar forest ] {

\node [b] at (0.0, 0.0) {  } 
child {node [b] at (-1.0, 1.0) {  }  
}
child {node [b] at (0.0, 1.0) {  }  
}
child {node [b] at (1.0, 1.0) {  }  
}
;
}}

\newcommand{\forestLK}{
\tikz[planar forest ] {

\node [b] at (0.0, 0.0) {  } 
child {node [l] at (0.0, 1.0) { $\times$ }  
}
;
}}

\newcommand{\forestMK}{
\tikz[planar forest ] {

\node [b] at (0.0, 0.0) {  } 
child {node [b] at (-0.5, 1.0) {  }  
}
child {node [b] at (0.5, 1.0) {  }  
}
;
}}

\newcommand{\forestNK}{
\tikz[planar forest ] {

\node [b] at (0.0, 0.0) {  } 
child {node [b] at (0.0, 1.0) {  }  
child {node [b] at (-0.5, 1.0) {  }  
}
child {node [b] at (0.5, 1.0) {  }  
}
}
;
}}

\newcommand{\forestOK}{
\tikz[planar forest ] {

\node [b] at (0.0, 0.0) {  } 
child {node [l] at (-1.0, 1.0) { $\times$ }  
}
child {node [b] at (0.0, 1.0) {  }  
}
child {node [b] at (1.0, 1.0) {  }  
}
;
}}

\newcommand{\forestPK}{
\tikz[planar forest ] {

\node [b] at (0.0, 0.0) {  } 
;
}}

\newcommand{\forestQK}{
\tikz[planar forest ] {

\node [b] at (0.0, 0.0) {  } 
child {node [l] at (-0.5, 1.0) { $\times$ }  
}
child {node [b] at (0.5, 1.0) {  }  
}
;
}}

\newcommand{\forestRK}{
\tikz[planar forest ] {

\node [b] at (0.0, 0.0) {  } 
child {node [l] at (0.0, 1.0) { $\times$ }  
}
;
}}

\newcommand{\forestSK}{
\tikz[planar forest ] {

\node [b] at (0.0, 0.0) {  } 
;
}}

\newcommand{\forestTK}{
\tikz[planar forest ] {

\node [b] at (0.0, 0.0) {  } 
child {node [b] at (-0.5, 1.0) {  }  
}
child {node [l] at (0.5, 1.0) { $\times$ }  
}
;
}}

\newcommand{\forestUK}{
\tikz[planar forest ] {

\node [b] at (0.0, 0.0) {  } 
child {node [l] at (0.0, 1.0) { $\times$ }  
}
;
}}

\newcommand{\forestVK}{
\tikz[planar forest ] {

\node [b] at (0.0, 0.0) {  } 
;
}}

\newcommand{\forestWK}{
\tikz[planar forest ] {

\node [b] at (0.0, 0.0) {  } 
;
}}

\newcommand{\forestXK}{
\tikz[planar forest ] {

\node [b] at (0.0, 0.0) {  } 
child {node [b] at (-0.5, 1.0) {  }  
}
child {node [b] at (0.5, 1.0) {  }  
}
;
}}

\newcommand{\forestYK}{
\tikz[planar forest ] {

\node [b] at (0.0, 0.0) {  } 
child {node [l] at (0.0, 1.0) { $\times$ }  
}
;
}}

\newcommand{\forestAL}{
\tikz[planar forest ] {

\node [b] at (0.0, 0.0) {  } 
child {node [b] at (-0.5, 1.0) {  }  
}
child {node [b] at (0.5, 1.0) {  }  
}
;
}}

\newcommand{\forestBL}{
\tikz[planar forest ] {

\node [b] at (0.0, 0.0) {  } 
child {node [b] at (-0.5, 1.0) {  }  
}
child {node [l] at (0.5, 1.0) { $\times$ }  
}
;
}}

\newcommand{\forestCL}{
\tikz[planar forest ] {

\node [b] at (0.0, 0.0) {  } 
child {node [b] at (0.0, 1.0) {  }  
}
;
}}

\newcommand{\forestDL}{
\tikz[planar forest ] {

\node [b] at (0.0, 0.0) {  } 
child {node [l] at (0.0, 1.0) { $\times$ }  
}
;
}}

\newcommand{\forestEL}{
\tikz[planar forest ] {

\node [b] at (0.0, 0.0) {  } 
child {node [l] at (0.0, 1.0) { $\times$ }  
}
;
}}

\newcommand{\forestFL}{
\tikz[planar forest ] {

\node [b] at (0.0, 0.0) {  } 
child {node [b] at (0.0, 1.0) {  }  
}
;
}}

\newcommand{\forestGL}{
\tikz[planar forest ] {

\node [b] at (0.0, 0.0) {  } 
child {node [b] at (-0.5, 1.0) {  }  
child {node [b] at (0.0, 1.0) {  }  
}
}
child {node [l] at (0.5, 1.0) { $\times$ }  
}
;
}}

\newcommand{\forestHL}{
\tikz[planar forest ] {

\node [b] at (0.0, 0.0) {  } 
;
}}

\newcommand{\forestIL}{
\tikz[planar forest ] {

\node [b] at (0.0, 0.0) {  } 
child {node [l] at (0.0, 1.0) { $\times$ }  
}
;
}}

\newcommand{\forestJL}{
\tikz[planar forest ] {

\node [b] at (0.0, 0.0) {  } 
child {node [l] at (-0.5, 1.0) { $\times$ }  
}
child {node [b] at (0.5, 1.0) {  }  
}
;
}}

\newcommand{\forestKL}{
\tikz[planar forest ] {

\node [b] at (0.0, 0.0) {  } 
;
}}

\newcommand{\forestLL}{
\tikz[planar forest ] {

\node [b] at (0.0, 0.0) {  } 
child {node [l] at (0.0, 1.0) { $\times$ }  
}
;
}}

\newcommand{\forestML}{
\tikz[planar forest ] {

\node [b] at (0.0, 0.0) {  } 
child {node [l] at (0.0, 1.0) { $\times$ }  
}
;
}}

\newcommand{\forestNL}{
\tikz[planar forest ] {

\node [b] at (0.0, 0.0) {  } 
child {node [l] at (0.0, 1.0) { $\times$ }  
}
;
}}

\newcommand{\forestOL}{
\tikz[planar forest ] {

\node [b] at (0.0, 0.0) {  } 
;
}}

\newcommand{\forestPL}{
\tikz[planar forest ] {

\node [b] at (0.0, 0.0) {  } 
child {node [l] at (0.0, 1.0) { $\times$ }  
}
;
}}

\newcommand{\forestQL}{
\tikz[planar forest ] {

\node [b] at (0.0, 0.0) {  } 
;
}}

\newcommand{\forestRL}{
\tikz[planar forest ] {

\node [b] at (0.0, 0.0) {  } 
child {node [b] at (0.0, 1.0) {  }  
}
;
}}

\newcommand{\forestSL}{
\tikz[planar forest ] {

\node [b] at (0.0, 0.0) {  } 
child {node [b] at (-0.5, 1.0) {  }  
child {node [b] at (0.0, 1.0) {  }  
}
}
child {node [b] at (0.5, 1.0) {  }  
}
;
}}

\newcommand{\forestTL}{
\tikz[planar forest ] {

\node [b] at (0.0, 0.0) {  } 
child {node [b] at (-0.5, 1.0) {  }  
child {node [b] at (0.0, 1.0) {  }  
}
}
child {node [l] at (0.5, 1.0) { $\times$ }  
}
;
}}

\newcommand{\forestUL}{
\tikz[planar forest ] {

\node [b] at (0.0, 0.0) {  } 
;
}}

\newcommand{\forestVL}{
\tikz[planar forest ] {

\node [b] at (0.0, 0.0) {  } 
child {node [l] at (-1.0, 1.0) { $\times$ }  
}
child {node [b] at (0.0, 1.0) {  }  
}
child {node [b] at (1.0, 1.0) {  }  
}
;
}}

\newcommand{\forestWL}{
\tikz[planar forest ] {

\node [b] at (0.0, 0.0) {  } 
;
}}

\newcommand{\forestXL}{
\tikz[planar forest ] {

\node [b] at (0.0, 0.0) {  } 
child {node [l] at (0.0, 1.0) { $\times$ }  
}
;
}}

\newcommand{\forestYL}{
\tikz[planar forest ] {

\node [b] at (0.0, 0.0) {  } 
child {node [l] at (-0.5, 1.0) { $\times$ }  
}
child {node [b] at (0.5, 1.0) {  }  
}
;
}}

\newcommand{\forestAM}{
\tikz[planar forest ] {

\node [b] at (0.0, 0.0) {  } 
;
}}

\newcommand{\forestBM}{
\tikz[planar forest ] {

\node [b] at (0.0, 0.0) {  } 
child {node [b] at (-0.5, 1.0) {  }  
}
child {node [b] at (0.5, 1.0) {  }  
child {node [b] at (0.0, 1.0) {  }  
}
}
;
}}

\newcommand{\forestCM}{
\tikz[planar forest ] {

\node [b] at (0.0, 0.0) {  } 
child {node [l] at (-0.5, 1.0) { $\times$ }  
}
child {node [b] at (0.5, 1.0) {  }  
child {node [b] at (0.0, 1.0) {  }  
}
}
;
}}

\newcommand{\forestDM}{
\tikz[planar forest ] {

\node [b] at (0.0, 0.0) {  } 
;
}}

\newcommand{\forestEM}{
\tikz[planar forest ] {

\node [b] at (0.0, 0.0) {  } 
child {node [b] at (-1.0, 1.0) {  }  
}
child {node [l] at (0.0, 1.0) { $\times$ }  
}
child {node [b] at (1.0, 1.0) {  }  
}
;
}}

\newcommand{\forestFM}{
\tikz[planar forest ] {

\node [b] at (0.0, 0.0) {  } 
;
}}

\newcommand{\forestGM}{
\tikz[planar forest ] {

\node [b] at (0.0, 0.0) {  } 
child {node [l] at (0.0, 1.0) { $\times$ }  
}
;
}}

\newcommand{\forestHM}{
\tikz[planar forest ] {

\node [b] at (0.0, 0.0) {  } 
child {node [b] at (-0.5, 1.0) {  }  
}
child {node [l] at (0.5, 1.0) { $\times$ }  
}
;
}}

\newcommand{\forestIM}{
\tikz[planar forest ] {

\node [b] at (0.0, 0.0) {  } 
;
}}

\newcommand{\forestJM}{
\tikz[planar forest ] {

\node [b] at (0.0, 0.0) {  } 
;
}}

\newcommand{\forestKM}{
\tikz[planar forest ] {

\node [b] at (0.0, 0.0) {  } 
child {node [b] at (0.0, 1.0) {  }  
child {node [b] at (0.0, 1.0) {  }  
}
}
;
}}

\newcommand{\forestLM}{
\tikz[planar forest ] {

\node [b] at (0.0, 0.0) {  } 
;
}}

\newcommand{\forestMM}{
\tikz[planar forest ] {

\node [b] at (0.0, 0.0) {  } 
child {node [b] at (-0.5, 1.0) {  }  
}
child {node [b] at (0.5, 1.0) {  }  
}
;
}}

\newcommand{\forestNM}{
\tikz[planar forest ] {

\node [b] at (0.0, 0.0) {  } 
child {node [l] at (0.0, 1.0) { $\times$ }  
}
;
}}

\newcommand{\forestOM}{
\tikz[planar forest ] {

\node [b] at (0.0, 0.0) {  } 
;
}}

\newcommand{\forestPM}{
\tikz[planar forest ] {

\node [b] at (0.0, 0.0) {  } 
child {node [b] at (0.0, 1.0) {  }  
}
;
}}

\newcommand{\forestQM}{
\tikz[planar forest ] {

\node [b] at (0.0, 0.0) {  } 
;
}}

\newcommand{\forestRM}{
\tikz[planar forest ] {

\node [b] at (0.0, 0.0) {  } 
;
}}

\newcommand{\forestSM}{
\tikz[planar forest ] {

\node [b] at (0.0, 0.0) {  } 
child {node [b] at (0.0, 1.0) {  }  
}
;
}}

\newcommand{\forestTM}{
\tikz[planar forest ] {

\node [b] at (0.0, 0.0) {  } 
;
}}

\newcommand{\forestUM}{
\tikz[planar forest ] {

\node [b] at (0.0, 0.0) {  } 
child {node [b] at (0.0, 1.0) {  }  
}
;
}}

\newcommand{\forestVM}{
\tikz[planar forest ] {

\node [b] at (0.0, 0.0) {  } 
child {node [b] at (0.0, 1.0) {  }  
child {node [b] at (0.0, 1.0) {  }  
}
}
;
}}

\newcommand{\forestWM}{
\tikz[planar forest ] {

\node [b] at (0.0, 0.0) {  } 
child {node [l] at (-0.5, 1.0) { $\times$ }  
}
child {node [b] at (0.5, 1.0) {  }  
}
;
}}

\newcommand{\forestXM}{
\tikz[planar forest ] {

\node [b] at (0.0, 0.0) {  } 
;
}}

\newcommand{\forestYM}{
\tikz[planar forest ] {

\node [b] at (0.0, 0.0) {  } 
child {node [l] at (0.0, 1.0) { $\times$ }  
}
;
}}

\newcommand{\forestAN}{
\tikz[planar forest ] {

\node [b] at (0.0, 0.0) {  } 
child {node [l] at (0.0, 1.0) { $\times$ }  
}
;
}}

\newcommand{\forestBN}{
\tikz[planar forest ] {

\node [b] at (0.0, 0.0) {  } 
;
}}

\newcommand{\forestCN}{
\tikz[planar forest ] {

\node [b] at (0.0, 0.0) {  } 
;
}}

\newcommand{\forestDN}{
\tikz[planar forest ] {

\node [b] at (0.0, 0.0) {  } 
child {node [b] at (0.0, 1.0) {  }  
}
;
}}

\newcommand{\forestEN}{
\tikz[planar forest ] {

\node [b] at (0.0, 0.0) {  } 
;
}}

\newcommand{\forestFN}{
\tikz[planar forest ] {

\node [b] at (0.0, 0.0) {  } 
child {node [b] at (0.0, 1.0) {  }  
child {node [b] at (0.0, 1.0) {  }  
}
}
;
}}

\newcommand{\forestGN}{
\tikz[planar forest ] {

\node [b] at (0.0, 0.0) {  } 
child {node [l] at (-0.5, 1.0) { $\times$ }  
}
child {node [b] at (0.5, 1.0) {  }  
}
;
}}

\newcommand{\forestHN}{
\tikz[planar forest ] {

\node [b] at (0.0, 0.0) {  } 
;
}}

\newcommand{\forestIN}{
\tikz[planar forest ] {

\node [b] at (0.0, 0.0) {  } 
child {node [l] at (0.0, 1.0) { $\times$ }  
}
;
}}

\newcommand{\forestJN}{
\tikz[planar forest ] {

\node [b] at (0.0, 0.0) {  } 
child {node [l] at (0.0, 1.0) { $\times$ }  
}
;
}}

\newcommand{\forestKN}{
\tikz[planar forest ] {

\node [b] at (0.0, 0.0) {  } 
;
}}

\newcommand{\forestLN}{
\tikz[planar forest ] {

\node [b] at (0.0, 0.0) {  } 
;
}}

\newcommand{\forestMN}{
\tikz[planar forest ] {

\node [b] at (0.0, 0.0) {  } 
child {node [b] at (0.0, 1.0) {  }  
}
;
}}

\newcommand{\forestNN}{
\tikz[planar forest ] {

\node [b] at (0.0, 0.0) {  } 
child {node [b] at (0.0, 1.0) {  }  
child {node [b] at (0.0, 1.0) {  }  
child {node [b] at (0.0, 1.0) {  }  
}
}
}
;
}}

\newcommand{\forestON}{
\tikz[planar forest ] {

\node [b] at (0.0, 0.0) {  } 
child {node [b] at (-0.5, 1.0) {  }  
}
child {node [b] at (0.5, 1.0) {  }  
child {node [b] at (0.0, 1.0) {  }  
}
}
;
}}

\newcommand{\forestPN}{
\tikz[planar forest ] {

\node [b] at (0.0, 0.0) {  } 
child {node [b] at (-0.5, 1.0) {  }  
child {node [b] at (0.0, 1.0) {  }  
}
}
child {node [b] at (0.5, 1.0) {  }  
}
;
}}

\newcommand{\forestQN}{
\tikz[planar forest ] {

\node [b] at (0.0, 0.0) {  } 
child {node [l] at (-1.0, 1.0) { $\times$ }  
}
child {node [b] at (0.0, 1.0) {  }  
}
child {node [b] at (1.0, 1.0) {  }  
}
;
}}

\newcommand{\forestRN}{
\tikz[planar forest ] {

\node [b] at (0.0, 0.0) {  } 
;
}}

\newcommand{\forestSN}{
\tikz[planar forest ] {

\node [b] at (0.0, 0.0) {  } 
child {node [b] at (-1.0, 1.0) {  }  
}
child {node [l] at (0.0, 1.0) { $\times$ }  
}
child {node [b] at (1.0, 1.0) {  }  
}
;
}}

\newcommand{\forestTN}{
\tikz[planar forest ] {

\node [b] at (0.0, 0.0) {  } 
;
}}

\newcommand{\forestUN}{
\tikz[planar forest ] {

\node [b] at (0.0, 0.0) {  } 
child {node [l] at (0.0, 1.0) { $\times$ }  
}
;
}}

\newcommand{\forestVN}{
\tikz[planar forest ] {

\node [b] at (0.0, 0.0) {  } 
child {node [l] at (-0.5, 1.0) { $\times$ }  
}
child {node [b] at (0.5, 1.0) {  }  
}
;
}}

\newcommand{\forestWN}{
\tikz[planar forest ] {

\node [b] at (0.0, 0.0) {  } 
;
}}

\newcommand{\forestXN}{
\tikz[planar forest ] {

\node [b] at (0.0, 0.0) {  } 
child {node [l] at (0.0, 1.0) { $\times$ }  
}
;
}}

\newcommand{\forestYN}{
\tikz[planar forest ] {

\node [b] at (0.0, 0.0) {  } 
child {node [b] at (-0.5, 1.0) {  }  
}
child {node [l] at (0.5, 1.0) { $\times$ }  
}
;
}}

\newcommand{\forestAO}{
\tikz[planar forest ] {

\node [b] at (0.0, 0.0) {  } 
;
}}

\newcommand{\forestBO}{
\tikz[planar forest ] {

\node [b] at (0.0, 0.0) {  } 
child {node [l] at (0.0, 1.0) { $\times$ }  
}
;
}}

\newcommand{\forestCO}{
\tikz[planar forest ] {

\node [b] at (0.0, 0.0) {  } 
child {node [l] at (0.0, 1.0) { $\times$ }  
}
;
}}

\newcommand{\forestDO}{
\tikz[planar forest ] {

\node [b] at (0.0, 0.0) {  } 
child {node [l] at (0.0, 1.0) { $\times$ }  
}
;
}}

\newcommand{\forestEO}{
\tikz[planar forest ] {

\node [b] at (0.0, 0.0) {  } 
;
}}

\newcommand{\forestFO}{
\tikz[planar forest ] {

\node [b] at (0.0, 0.0) {  } 
;
}}

\newcommand{\forestGO}{
\tikz[planar forest ] {

\node [b] at (0.0, 0.0) {  } 
child {node [b] at (0.0, 1.0) {  }  
child {node [b] at (0.0, 1.0) {  }  
}
}
;
}}

\newcommand{\forestHO}{
\tikz[planar forest ] {

\node [b] at (0.0, 0.0) {  } 
;
}}

\newcommand{\forestIO}{
\tikz[planar forest ] {

\node [b] at (0.0, 0.0) {  } 
;
}}

\newcommand{\forestJO}{
\tikz[planar forest ] {

\node [b] at (0.0, 0.0) {  } 
child {node [b] at (0.0, 1.0) {  }  
}
;
}}

\newcommand{\forestKO}{
\tikz[planar forest ] {

\node [b] at (0.0, 0.0) {  } 
child {node [b] at (0.0, 1.0) {  }  
child {node [b] at (0.0, 1.0) {  }  
child {node [b] at (0.0, 1.0) {  }  
}
}
}
;
}}

\newcommand{\forestLO}{
\tikz[planar forest ] {

\node [b] at (0.0, 0.0) {  } 
child {node [l] at (0.0, 1.0) { $\times$ }  
}
;
}}

\newcommand{\forestMO}{
\tikz[planar forest ] {

\node [b] at (0.0, 0.0) {  } 
child {node [l] at (0.0, 1.0) { $\times$ }  
}
;
}}

\newcommand{\forestNO}{
\tikz[planar forest ] {

\node [b] at (0.0, 0.0) {  } 
child {node [l] at (0.0, 1.0) { $\times$ }  
}
;
}}

\newcommand{\forestOO}{
\tikz[planar forest ] {

\node [b] at (0.0, 0.0) {  } 
child {node [l] at (0.0, 1.0) { $\times$ }  
}
;
}}

\newcommand{\forestPO}{
\tikz[planar forest ] {

\node [b] at (0.0, 0.0) {  } 
child {node [l] at (0.0, 1.0) { $\times$ }  
}
;
}}

\newcommand{\forestQO}{
\tikz[planar forest ] {

\node [b] at (0.0, 0.0) {  } 
child {node [l] at (0.0, 1.0) { $\times$ }  
}
;
}}

\newcommand{\forestRO}{
\tikz[planar forest ] {

\node [b] at (0.0, 0.0) {  } 
child {node [l] at (-0.5, 1.0) { $\times$ }  
}
child {node [b] at (0.5, 1.0) {  }  
}
;
}}

\newcommand{\forestSO}{
\tikz[planar forest ] {

\node [b] at (0.0, 0.0) {  } 
child {node [l] at (0.0, 1.0) { $\times$ }  
}
;
}}

\newcommand{\forestTO}{
\tikz[planar forest ] {

\node [b] at (0.0, 0.0) {  } 
child {node [l] at (-0.5, 1.0) { $\times$ }  
}
child {node [b] at (0.5, 1.0) {  }  
child {node [b] at (0.0, 1.0) {  }  
}
}
;
}}

\newcommand{\forestUO}{
\tikz[planar forest ] {

\node [b] at (0.0, 0.0) {  } 
child {node [l] at (-0.5, 1.0) { $\times$ }  
}
child {node [b] at (0.5, 1.0) {  }  
child {node [b] at (0.0, 1.0) {  }  
child {node [b] at (0.0, 1.0) {  }  
}
}
}
;
}}

\newcommand{\forestVO}{
\tikz[planar forest ] {

\node [b] at (0.0, 0.0) {  } 
child {node [b] at (0.0, 1.0) {  }  
child {node [b] at (-0.5, 1.0) {  }  
}
child {node [b] at (0.5, 1.0) {  }  
}
}
;
}}

\newcommand{\forestWO}{
\tikz[planar forest ] {

\node [b] at (0.0, 0.0) {  } 
child {node [l] at (0.0, 1.0) { $\times$ }  
}
;
}}

\newcommand{\forestXO}{
\tikz[planar forest ] {

\node [b] at (0.0, 0.0) {  } 
child {node [b] at (-0.5, 1.0) {  }  
}
child {node [l] at (0.5, 1.0) { $\times$ }  
}
;
}}

\newcommand{\forestYO}{
\tikz[planar forest ] {

\node [b] at (0.0, 0.0) {  } 
child {node [l] at (0.0, 1.0) { $\times$ }  
}
;
}}

\newcommand{\forestAP}{
\tikz[planar forest ] {

\node [b] at (0.0, 0.0) {  } 
child {node [l] at (0.0, 1.0) { $\times$ }  
}
;
}}

\newcommand{\forestBP}{
\tikz[planar forest ] {

\node [b] at (0.0, 0.0) {  } 
child {node [l] at (-0.5, 1.0) { $\times$ }  
}
child {node [b] at (0.5, 1.0) {  }  
}
;
}}

\newcommand{\forestCP}{
\tikz[planar forest ] {

\node [b] at (0.0, 0.0) {  } 
child {node [l] at (0.0, 1.0) { $\times$ }  
}
;
}}

\newcommand{\forestDP}{
\tikz[planar forest ] {

\node [b] at (0.0, 0.0) {  } 
child {node [l] at (0.0, 1.0) { $\times$ }  
}
;
}}

\newcommand{\forestEP}{
\tikz[planar forest ] {

\node [b] at (0.0, 0.0) {  } 
child {node [l] at (-1.0, 1.0) { $\times$ }  
}
child {node [b] at (0.0, 1.0) {  }  
}
child {node [b] at (1.0, 1.0) {  }  
}
;
}}

\newcommand{\forestFP}{
\tikz[planar forest ] {

\node [b] at (0.0, 0.0) {  } 
child {node [l] at (-0.5, 1.0) { $\times$ }  
}
child {node [b] at (0.5, 1.0) {  }  
child {node [b] at (-0.5, 1.0) {  }  
}
child {node [b] at (0.5, 1.0) {  }  
}
}
;
}}

\newcommand{\forestGP}{
\tikz[planar forest ] {

\node [b] at (0.0, 0.0) {  } 
child {node [b] at (-0.5, 1.0) {  }  
}
child {node [b] at (0.5, 1.0) {  }  
child {node [b] at (0.0, 1.0) {  }  
}
}
;
}}

\newcommand{\forestHP}{
\tikz[planar forest ] {

\node [b] at (0.0, 0.0) {  } 
child {node [b] at (-0.5, 1.0) {  }  
}
child {node [l] at (0.5, 1.0) { $\times$ }  
}
;
}}

\newcommand{\forestIP}{
\tikz[planar forest ] {

\node [b] at (0.0, 0.0) {  } 
child {node [l] at (0.0, 1.0) { $\times$ }  
}
;
}}

\newcommand{\forestJP}{
\tikz[planar forest ] {

\node [b] at (0.0, 0.0) {  } 
child {node [l] at (0.0, 1.0) { $\times$ }  
}
;
}}

\newcommand{\forestKP}{
\tikz[planar forest ] {

\node [b] at (0.0, 0.0) {  } 
child {node [b] at (-0.5, 1.0) {  }  
}
child {node [l] at (0.5, 1.0) { $\times$ }  
}
;
}}

\newcommand{\forestLP}{
\tikz[planar forest ] {

\node [b] at (0.0, 0.0) {  } 
child {node [l] at (-0.5, 1.0) { $\times$ }  
}
child {node [b] at (0.5, 1.0) {  }  
}
;
}}

\newcommand{\forestMP}{
\tikz[planar forest ] {

\node [b] at (0.0, 0.0) {  } 
child {node [l] at (-0.5, 1.0) { $\times$ }  
}
child {node [b] at (0.5, 1.0) {  }  
child {node [b] at (0.0, 1.0) {  }  
}
}
;
}}

\newcommand{\forestNP}{
\tikz[planar forest ] {

\node [b] at (0.0, 0.0) {  } 
child {node [l] at (0.0, 1.0) { $\times$ }  
}
;
}}

\newcommand{\forestOP}{
\tikz[planar forest ] {

\node [b] at (0.0, 0.0) {  } 
child {node [l] at (-1.0, 1.0) { $\times$ }  
}
child {node [b] at (0.0, 1.0) {  }  
}
child {node [b] at (1.0, 1.0) {  }  
child {node [b] at (0.0, 1.0) {  }  
}
}
;
}}

\newcommand{\forestPP}{
\tikz[planar forest ] {

\node [b] at (0.0, 0.0) {  } 
child {node [b] at (-0.5, 1.0) {  }  
child {node [b] at (0.0, 1.0) {  }  
}
}
child {node [b] at (0.5, 1.0) {  }  
}
;
}}

\newcommand{\forestQP}{
\tikz[planar forest ] {

\node [b] at (0.0, 0.0) {  } 
child {node [b] at (-0.5, 1.0) {  }  
child {node [b] at (0.0, 1.0) {  }  
}
}
child {node [l] at (0.5, 1.0) { $\times$ }  
}
;
}}

\newcommand{\forestRP}{
\tikz[planar forest ] {

\node [b] at (0.0, 0.0) {  } 
child {node [l] at (0.0, 1.0) { $\times$ }  
}
;
}}

\newcommand{\forestSP}{
\tikz[planar forest ] {

\node [b] at (0.0, 0.0) {  } 
child {node [l] at (-0.5, 1.0) { $\times$ }  
}
child {node [b] at (0.5, 1.0) {  }  
}
;
}}

\newcommand{\forestTP}{
\tikz[planar forest ] {

\node [b] at (0.0, 0.0) {  } 
child {node [l] at (0.0, 1.0) { $\times$ }  
}
;
}}

\newcommand{\forestUP}{
\tikz[planar forest ] {

\node [b] at (0.0, 0.0) {  } 
child {node [l] at (0.0, 1.0) { $\times$ }  
}
;
}}

\newcommand{\forestVP}{
\tikz[planar forest ] {

\node [b] at (0.0, 0.0) {  } 
child {node [l] at (-0.5, 1.0) { $\times$ }  
}
child {node [b] at (0.5, 1.0) {  }  
}
;
}}

\newcommand{\forestWP}{
\tikz[planar forest ] {

\node [b] at (0.0, 0.0) {  } 
child {node [l] at (-0.5, 1.0) { $\times$ }  
}
child {node [b] at (0.5, 1.0) {  }  
}
;
}}

\newcommand{\forestXP}{
\tikz[planar forest ] {

\node [b] at (0.0, 0.0) {  } 
child {node [l] at (-1.0, 1.0) { $\times$ }  
}
child {node [b] at (0.0, 1.0) {  }  
child {node [b] at (0.0, 1.0) {  }  
}
}
child {node [b] at (1.0, 1.0) {  }  
}
;
}}

\newcommand{\forestYP}{
\tikz[planar forest ] {

\node [b] at (0.0, 0.0) {  } 
child {node [b] at (-1.0, 1.0) {  }  
}
child {node [b] at (0.0, 1.0) {  }  
}
child {node [b] at (1.0, 1.0) {  }  
}
;
}}

\newcommand{\forestAQ}{
\tikz[planar forest ] {

\node [b] at (0.0, 0.0) {  } 
child {node [b] at (-1.0, 1.0) {  }  
}
child {node [b] at (0.0, 1.0) {  }  
}
child {node [l] at (1.0, 1.0) { $\times$ }  
}
;
}}

\newcommand{\forestBQ}{
\tikz[planar forest ] {

\node [b] at (0.0, 0.0) {  } 
child {node [l] at (0.0, 1.0) { $\times$ }  
}
;
}}

\newcommand{\forestCQ}{
\tikz[planar forest ] {

\node [b] at (0.0, 0.0) {  } 
child {node [b] at (-1.0, 1.0) {  }  
}
child {node [l] at (0.0, 1.0) { $\times$ }  
}
child {node [b] at (1.0, 1.0) {  }  
}
;
}}

\newcommand{\forestDQ}{
\tikz[planar forest ] {

\node [b] at (0.0, 0.0) {  } 
child {node [l] at (0.0, 1.0) { $\times$ }  
}
;
}}

\newcommand{\forestEQ}{
\tikz[planar forest ] {

\node [b] at (0.0, 0.0) {  } 
child {node [l] at (-1.0, 1.0) { $\times$ }  
}
child {node [b] at (0.0, 1.0) {  }  
}
child {node [b] at (1.0, 1.0) {  }  
}
;
}}

\newcommand{\forestFQ}{
\tikz[planar forest ] {

\node [b] at (0.0, 0.0) {  } 
child {node [l] at (0.0, 1.0) { $\times$ }  
}
;
}}

\newcommand{\forestGQ}{
\tikz[planar forest ] {

\node [b] at (0.0, 0.0) {  } 
child {node [l] at (-1.5, 1.0) { $\times$ }  
}
child {node [b] at (-0.5, 1.0) {  }  
}
child {node [b] at (0.5, 1.0) {  }  
}
child {node [b] at (1.5, 1.0) {  }  
}
;
}}

\newcommand{\forestHQ}{
\tikz[planar forest ] {

\node [b] at (0.0, 0.0) {  } 
child {node [l] at (0.0, 1.0) { $\times$ }  
}
;
}}

\newcommand{\forestIQ}{
\tikz[planar forest ] {

\node [b] at (0.0, 0.0) {  } 
child {node [b] at (0.0, 1.0) {  }  
child {node [b] at (0.0, 1.0) {  }  
}
}
;
}}

\newcommand{\forestJQ}{
\tikz[planar forest ] {

\node [b] at (0.0, 0.0) {  } 
child {node [l] at (0.0, 1.0) { $\times$ }  
}
;
}}

\newcommand{\forestKQ}{
\tikz[planar forest ] {

\node [b] at (0.0, 0.0) {  } 
child {node [l] at (0.0, 1.0) { $\times$ }  
}
;
}}

\newcommand{\forestLQ}{
\tikz[planar forest ] {

\node [b] at (0.0, 0.0) {  } 
child {node [l] at (0.0, 1.0) { $\times$ }  
}
;
}}

\newcommand{\forestMQ}{
\tikz[planar forest ] {

\node [b] at (0.0, 0.0) {  } 
child {node [l] at (0.0, 1.0) { $\times$ }  
}
;
}}

\newcommand{\forestNQ}{
\tikz[planar forest ] {

\node [b] at (0.0, 0.0) {  } 
child {node [l] at (0.0, 1.0) { $\times$ }  
}
;
}}

\newcommand{\forestOQ}{
\tikz[planar forest ] {

\node [b] at (0.0, 0.0) {  } 
child {node [l] at (0.0, 1.0) { $\times$ }  
}
;
}}

\newcommand{\forestPQ}{
\tikz[planar forest ] {

\node [b] at (0.0, 0.0) {  } 
child {node [l] at (-0.5, 1.0) { $\times$ }  
}
child {node [b] at (0.5, 1.0) {  }  
}
;
}}

\newcommand{\forestQQ}{
\tikz[planar forest ] {

\node [b] at (0.0, 0.0) {  } 
child {node [l] at (0.0, 1.0) { $\times$ }  
}
;
}}

\newcommand{\forestRQ}{
\tikz[planar forest ] {

\node [b] at (0.0, 0.0) {  } 
child {node [l] at (-0.5, 1.0) { $\times$ }  
}
child {node [b] at (0.5, 1.0) {  }  
child {node [b] at (0.0, 1.0) {  }  
}
}
;
}}

\newcommand{\forestSQ}{
\tikz[planar forest ] {

\node [b] at (0.0, 0.0) {  } 
child {node [b] at (-0.5, 1.0) {  }  
child {node [b] at (0.0, 1.0) {  }  
child {node [b] at (0.0, 1.0) {  }  
}
}
}
child {node [l] at (0.5, 1.0) { $\times$ }  
}
;
}}

\newcommand{\forestTQ}{
\tikz[planar forest ] {

\node [b] at (0.0, 0.0) {  } 
child {node [l] at (0.0, 1.0) { $\times$ }  
}
;
}}

\newcommand{\forestUQ}{
\tikz[planar forest ] {

\node [b] at (0.0, 0.0) {  } 
child {node [b] at (-0.5, 1.0) {  }  
}
child {node [b] at (0.5, 1.0) {  }  
}
;
}}

\newcommand{\forestVQ}{
\tikz[planar forest ] {

\node [b] at (0.0, 0.0) {  } 
child {node [l] at (0.0, 1.0) { $\times$ }  
}
;
}}

\newcommand{\forestWQ}{
\tikz[planar forest ] {

\node [b] at (0.0, 0.0) {  } 
child {node [b] at (-0.5, 1.0) {  }  
}
child {node [l] at (0.5, 1.0) { $\times$ }  
}
;
}}

\newcommand{\forestXQ}{
\tikz[planar forest ] {

\node [b] at (0.0, 0.0) {  } 
child {node [l] at (0.0, 1.0) { $\times$ }  
}
;
}}

\newcommand{\forestYQ}{
\tikz[planar forest ] {

\node [b] at (0.0, 0.0) {  } 
child {node [l] at (0.0, 1.0) { $\times$ }  
}
;
}}

\newcommand{\forestAR}{
\tikz[planar forest ] {

\node [b] at (0.0, 0.0) {  } 
child {node [l] at (-0.5, 1.0) { $\times$ }  
}
child {node [b] at (0.5, 1.0) {  }  
}
;
}}

\newcommand{\forestBR}{
\tikz[planar forest ] {

\node [b] at (0.0, 0.0) {  } 
child {node [l] at (0.0, 1.0) { $\times$ }  
}
;
}}

\newcommand{\forestCR}{
\tikz[planar forest ] {

\node [b] at (0.0, 0.0) {  } 
child {node [l] at (0.0, 1.0) { $\times$ }  
}
;
}}

\newcommand{\forestDR}{
\tikz[planar forest ] {

\node [b] at (0.0, 0.0) {  } 
child {node [l] at (-1.0, 1.0) { $\times$ }  
}
child {node [b] at (0.0, 1.0) {  }  
}
child {node [b] at (1.0, 1.0) {  }  
}
;
}}

\newcommand{\forestER}{
\tikz[planar forest ] {

\node [b] at (0.0, 0.0) {  } 
child {node [b] at (-0.5, 1.0) {  }  
child {node [b] at (-0.5, 1.0) {  }  
}
child {node [b] at (0.5, 1.0) {  }  
}
}
child {node [l] at (0.5, 1.0) { $\times$ }  
}
;
}}

\newcommand{\forestFR}{
\tikz[planar forest ] {

\node [b] at (0.0, 0.0) {  } 
child {node [l] at (-0.5, 1.0) { $\times$ }  
}
child {node [b] at (0.5, 1.0) {  }  
}
;
}}

\newcommand{\forestGR}{
\tikz[planar forest ] {

\node [b] at (0.0, 0.0) {  } 
child {node [b] at (0.0, 1.0) {  }  
}
;
}}

\newcommand{\forestHR}{
\tikz[planar forest ] {

\node [b] at (0.0, 0.0) {  } 
child {node [l] at (-0.5, 1.0) { $\times$ }  
}
child {node [b] at (0.5, 1.0) {  }  
}
;
}}

\newcommand{\forestIR}{
\tikz[planar forest ] {

\node [b] at (0.0, 0.0) {  } 
child {node [l] at (0.0, 1.0) { $\times$ }  
}
;
}}

\newcommand{\forestJR}{
\tikz[planar forest ] {

\node [b] at (0.0, 0.0) {  } 
child {node [l] at (0.0, 1.0) { $\times$ }  
}
;
}}

\newcommand{\forestKR}{
\tikz[planar forest ] {

\node [b] at (0.0, 0.0) {  } 
child {node [l] at (-0.5, 1.0) { $\times$ }  
}
child {node [b] at (0.5, 1.0) {  }  
}
;
}}

\newcommand{\forestLR}{
\tikz[planar forest ] {

\node [b] at (0.0, 0.0) {  } 
child {node [l] at (-0.5, 1.0) { $\times$ }  
}
child {node [b] at (0.5, 1.0) {  }  
}
;
}}

\newcommand{\forestMR}{
\tikz[planar forest ] {

\node [b] at (0.0, 0.0) {  } 
child {node [l] at (-0.5, 1.0) { $\times$ }  
}
child {node [b] at (0.5, 1.0) {  }  
child {node [b] at (0.0, 1.0) {  }  
child {node [b] at (0.0, 1.0) {  }  
}
}
}
;
}}

\newcommand{\forestNR}{
\tikz[planar forest ] {

\node [b] at (0.0, 0.0) {  } 
child {node [b] at (-1.0, 1.0) {  }  
child {node [b] at (0.0, 1.0) {  }  
}
}
child {node [l] at (0.0, 1.0) { $\times$ }  
}
child {node [b] at (1.0, 1.0) {  }  
}
;
}}

\newcommand{\forestOR}{
\tikz[planar forest ] {

\node [b] at (0.0, 0.0) {  } 
child {node [b] at (-0.5, 1.0) {  }  
}
child {node [l] at (0.5, 1.0) { $\times$ }  
}
;
}}

\newcommand{\forestPR}{
\tikz[planar forest ] {

\node [b] at (0.0, 0.0) {  } 
child {node [b] at (0.0, 1.0) {  }  
}
;
}}

\newcommand{\forestQR}{
\tikz[planar forest ] {

\node [b] at (0.0, 0.0) {  } 
child {node [b] at (-0.5, 1.0) {  }  
}
child {node [l] at (0.5, 1.0) { $\times$ }  
}
;
}}

\newcommand{\forestRR}{
\tikz[planar forest ] {

\node [b] at (0.0, 0.0) {  } 
child {node [l] at (0.0, 1.0) { $\times$ }  
}
;
}}

\newcommand{\forestSR}{
\tikz[planar forest ] {

\node [b] at (0.0, 0.0) {  } 
child {node [l] at (0.0, 1.0) { $\times$ }  
}
;
}}

\newcommand{\forestTR}{
\tikz[planar forest ] {

\node [b] at (0.0, 0.0) {  } 
child {node [b] at (-0.5, 1.0) {  }  
}
child {node [l] at (0.5, 1.0) { $\times$ }  
}
;
}}

\newcommand{\forestUR}{
\tikz[planar forest ] {

\node [b] at (0.0, 0.0) {  } 
child {node [l] at (-0.5, 1.0) { $\times$ }  
}
child {node [b] at (0.5, 1.0) {  }  
}
;
}}

\newcommand{\forestVR}{
\tikz[planar forest ] {

\node [b] at (0.0, 0.0) {  } 
child {node [b] at (-0.5, 1.0) {  }  
child {node [b] at (0.0, 1.0) {  }  
child {node [b] at (0.0, 1.0) {  }  
}
}
}
child {node [l] at (0.5, 1.0) { $\times$ }  
}
;
}}

\newcommand{\forestWR}{
\tikz[planar forest ] {

\node [b] at (0.0, 0.0) {  } 
child {node [b] at (-1.0, 1.0) {  }  
child {node [b] at (0.0, 1.0) {  }  
}
}
child {node [b] at (0.0, 1.0) {  }  
}
child {node [l] at (1.0, 1.0) { $\times$ }  
}
;
}}

\newcommand{\forestXR}{
\tikz[planar forest ] {

\node [b] at (0.0, 0.0) {  } 
child {node [l] at (0.0, 1.0) { $\times$ }  
}
;
}}

\newcommand{\forestYR}{
\tikz[planar forest ] {

\node [b] at (0.0, 0.0) {  } 
child {node [l] at (0.0, 1.0) { $\times$ }  
}
;
}}

\newcommand{\forestAS}{
\tikz[planar forest ] {

\node [b] at (0.0, 0.0) {  } 
child {node [b] at (0.0, 1.0) {  }  
}
;
}}

\newcommand{\forestBS}{
\tikz[planar forest ] {

\node [b] at (0.0, 0.0) {  } 
child {node [l] at (0.0, 1.0) { $\times$ }  
}
;
}}

\newcommand{\forestCS}{
\tikz[planar forest ] {

\node [b] at (0.0, 0.0) {  } 
child {node [l] at (0.0, 1.0) { $\times$ }  
}
;
}}

\newcommand{\forestDS}{
\tikz[planar forest ] {

\node [b] at (0.0, 0.0) {  } 
child {node [l] at (0.0, 1.0) { $\times$ }  
}
;
}}

\newcommand{\forestES}{
\tikz[planar forest ] {

\node [b] at (0.0, 0.0) {  } 
child {node [l] at (0.0, 1.0) { $\times$ }  
}
;
}}

\newcommand{\forestFS}{
\tikz[planar forest ] {

\node [b] at (0.0, 0.0) {  } 
child {node [l] at (0.0, 1.0) { $\times$ }  
}
;
}}

\newcommand{\forestGS}{
\tikz[planar forest ] {

\node [b] at (0.0, 0.0) {  } 
child {node [l] at (0.0, 1.0) { $\times$ }  
}
;
}}

\newcommand{\forestHS}{
\tikz[planar forest ] {

\node [b] at (0.0, 0.0) {  } 
child {node [l] at (-0.5, 1.0) { $\times$ }  
}
child {node [b] at (0.5, 1.0) {  }  
}
;
}}

\newcommand{\forestIS}{
\tikz[planar forest ] {

\node [b] at (0.0, 0.0) {  } 
child {node [l] at (0.0, 1.0) { $\times$ }  
}
;
}}

\newcommand{\forestJS}{
\tikz[planar forest ] {

\node [b] at (0.0, 0.0) {  } 
child {node [b] at (-0.5, 1.0) {  }  
child {node [b] at (0.0, 1.0) {  }  
}
}
child {node [l] at (0.5, 1.0) { $\times$ }  
}
;
}}

\newcommand{\forestKS}{
\tikz[planar forest ] {

\node [b] at (0.0, 0.0) {  } 
child {node [l] at (0.0, 1.0) { $\times$ }  
}
;
}}

\newcommand{\forestLS}{
\tikz[planar forest ] {

\node [b] at (0.0, 0.0) {  } 
child {node [l] at (0.0, 1.0) { $\times$ }  
}
;
}}

\newcommand{\forestMS}{
\tikz[planar forest ] {

\node [b] at (0.0, 0.0) {  } 
child {node [b] at (0.0, 1.0) {  }  
}
;
}}

\newcommand{\forestNS}{
\tikz[planar forest ] {

\node [b] at (0.0, 0.0) {  } 
child {node [l] at (0.0, 1.0) { $\times$ }  
}
;
}}

\newcommand{\forestOS}{
\tikz[planar forest ] {

\node [b] at (0.0, 0.0) {  } 
child {node [l] at (0.0, 1.0) { $\times$ }  
}
;
}}

\newcommand{\forestPS}{
\tikz[planar forest ] {

\node [b] at (0.0, 0.0) {  } 
child {node [l] at (0.0, 1.0) { $\times$ }  
}
;
}}

\newcommand{\forestQS}{
\tikz[planar forest ] {

\node [b] at (0.0, 0.0) {  } 
child {node [l] at (0.0, 1.0) { $\times$ }  
}
;
}}

\newcommand{\forestRS}{
\tikz[planar forest ] {

\node [b] at (0.0, 0.0) {  } 
child {node [l] at (0.0, 1.0) { $\times$ }  
}
;
}}

\newcommand{\forestSS}{
\tikz[planar forest ] {

\node [b] at (0.0, 0.0) {  } 
child {node [l] at (0.0, 1.0) { $\times$ }  
}
;
}}

\newcommand{\forestTS}{
\tikz[planar forest ] {

\node [b] at (0.0, 0.0) {  } 
child {node [l] at (-0.5, 1.0) { $\times$ }  
}
child {node [b] at (0.5, 1.0) {  }  
}
;
}}

\newcommand{\forestUS}{
\tikz[planar forest ] {

\node [b] at (0.0, 0.0) {  } 
child {node [l] at (0.0, 1.0) { $\times$ }  
}
;
}}

\newcommand{\forestVS}{
\tikz[planar forest ] {

\node [b] at (0.0, 0.0) {  } 
child {node [b] at (-0.5, 1.0) {  }  
child {node [b] at (0.0, 1.0) {  }  
}
}
child {node [l] at (0.5, 1.0) { $\times$ }  
}
;
}}

\newcommand{\forestWS}{
\tikz[planar forest ] {

\node [b] at (0.0, 0.0) {  } 
child {node [l] at (-0.5, 1.0) { $\times$ }  
}
child {node [b] at (0.5, 1.0) {  }  
child {node [b] at (0.0, 1.0) {  }  
}
}
;
}}

\newcommand{\forestXS}{
\tikz[planar forest ] {

\node [b] at (0.0, 0.0) {  } 
;
}}

\newcommand{\forestYS}{
\tikz[planar forest ] {

\node [b] at (0.0, 0.0) {  } 
child {node [l] at (-0.5, 1.0) { $\times$ }  
}
child {node [b] at (0.5, 1.0) {  }  
child {node [b] at (0.0, 1.0) {  }  
}
}
;
}}

\newcommand{\forestAT}{
\tikz[planar forest ] {

\node [b] at (0.0, 0.0) {  } 
child {node [l] at (0.0, 1.0) { $\times$ }  
}
;
}}

\newcommand{\forestBT}{
\tikz[planar forest ] {

\node [b] at (0.0, 0.0) {  } 
child {node [l] at (-0.5, 1.0) { $\times$ }  
}
child {node [b] at (0.5, 1.0) {  }  
child {node [b] at (0.0, 1.0) {  }  
child {node [b] at (0.0, 1.0) {  }  
}
}
}
;
}}

\newcommand{\forestCT}{
\tikz[planar forest ] {

\node [b] at (0.0, 0.0) {  } 
child {node [l] at (-0.5, 1.0) { $\times$ }  
}
child {node [b] at (0.5, 1.0) {  }  
child {node [b] at (-0.5, 1.0) {  }  
}
child {node [b] at (0.5, 1.0) {  }  
}
}
;
}}

\newcommand{\forestDT}{
\tikz[planar forest ] {

\node [b] at (0.0, 0.0) {  } 
child {node [b] at (-1.0, 1.0) {  }  
}
child {node [l] at (0.0, 1.0) { $\times$ }  
}
child {node [b] at (1.0, 1.0) {  }  
child {node [b] at (0.0, 1.0) {  }  
}
}
;
}}

\newcommand{\forestET}{
\tikz[planar forest ] {

\node [b] at (0.0, 0.0) {  } 
child {node [b] at (-0.5, 1.0) {  }  
child {node [b] at (0.0, 1.0) {  }  
}
}
child {node [l] at (0.5, 1.0) { $\times$ }  
}
;
}}

\newcommand{\forestFT}{
\tikz[planar forest ] {

\node [b] at (0.0, 0.0) {  } 
;
}}

\newcommand{\forestGT}{
\tikz[planar forest ] {

\node [b] at (0.0, 0.0) {  } 
child {node [b] at (-0.5, 1.0) {  }  
child {node [b] at (0.0, 1.0) {  }  
}
}
child {node [l] at (0.5, 1.0) { $\times$ }  
}
;
}}

\newcommand{\forestHT}{
\tikz[planar forest ] {

\node [b] at (0.0, 0.0) {  } 
child {node [l] at (0.0, 1.0) { $\times$ }  
}
;
}}

\newcommand{\forestIT}{
\tikz[planar forest ] {

\node [b] at (0.0, 0.0) {  } 
child {node [b] at (-0.5, 1.0) {  }  
child {node [b] at (0.0, 1.0) {  }  
child {node [b] at (0.0, 1.0) {  }  
}
}
}
child {node [l] at (0.5, 1.0) { $\times$ }  
}
;
}}

\newcommand{\forestJT}{
\tikz[planar forest ] {

\node [b] at (0.0, 0.0) {  } 
child {node [b] at (-0.5, 1.0) {  }  
child {node [b] at (-0.5, 1.0) {  }  
}
child {node [b] at (0.5, 1.0) {  }  
}
}
child {node [l] at (0.5, 1.0) { $\times$ }  
}
;
}}

\newcommand{\forestKT}{
\tikz[planar forest ] {

\node [b] at (0.0, 0.0) {  } 
child {node [b] at (-1.0, 1.0) {  }  
}
child {node [b] at (0.0, 1.0) {  }  
child {node [b] at (0.0, 1.0) {  }  
}
}
child {node [l] at (1.0, 1.0) { $\times$ }  
}
;
}}

\newcommand{\forestLT}{
\tikz[planar forest ] {

\node [b] at (0.0, 0.0) {  } 
child {node [l] at (-1.0, 1.0) { $\times$ }  
}
child {node [b] at (0.0, 1.0) {  }  
}
child {node [b] at (1.0, 1.0) {  }  
}
;
}}

\newcommand{\forestMT}{
\tikz[planar forest ] {

\node [b] at (0.0, 0.0) {  } 
;
}}

\newcommand{\forestNT}{
\tikz[planar forest ] {

\node [b] at (0.0, 0.0) {  } 
child {node [l] at (-1.0, 1.0) { $\times$ }  
}
child {node [b] at (0.0, 1.0) {  }  
}
child {node [b] at (1.0, 1.0) {  }  
}
;
}}

\newcommand{\forestOT}{
\tikz[planar forest ] {

\node [b] at (0.0, 0.0) {  } 
child {node [l] at (0.0, 1.0) { $\times$ }  
}
;
}}

\newcommand{\forestPT}{
\tikz[planar forest ] {

\node [b] at (0.0, 0.0) {  } 
child {node [l] at (-1.0, 1.0) { $\times$ }  
}
child {node [b] at (0.0, 1.0) {  }  
}
child {node [b] at (1.0, 1.0) {  }  
child {node [b] at (0.0, 1.0) {  }  
}
}
;
}}

\newcommand{\forestQT}{
\tikz[planar forest ] {

\node [b] at (0.0, 0.0) {  } 
child {node [l] at (-1.0, 1.0) { $\times$ }  
}
child {node [b] at (0.0, 1.0) {  }  
child {node [b] at (0.0, 1.0) {  }  
}
}
child {node [b] at (1.0, 1.0) {  }  
}
;
}}

\newcommand{\forestRT}{
\tikz[planar forest ] {

\node [b] at (0.0, 0.0) {  } 
child {node [b] at (-1.5, 1.0) {  }  
}
child {node [l] at (-0.5, 1.0) { $\times$ }  
}
child {node [b] at (0.5, 1.0) {  }  
}
child {node [b] at (1.5, 1.0) {  }  
}
;
}}

\newcommand{\forestST}{
\tikz[planar forest ] {

\node [b] at (0.0, 0.0) {  } 
child {node [b] at (-1.0, 1.0) {  }  
}
child {node [l] at (0.0, 1.0) { $\times$ }  
}
child {node [b] at (1.0, 1.0) {  }  
}
;
}}

\newcommand{\forestTT}{
\tikz[planar forest ] {

\node [b] at (0.0, 0.0) {  } 
;
}}

\newcommand{\forestUT}{
\tikz[planar forest ] {

\node [b] at (0.0, 0.0) {  } 
child {node [b] at (-1.0, 1.0) {  }  
}
child {node [l] at (0.0, 1.0) { $\times$ }  
}
child {node [b] at (1.0, 1.0) {  }  
}
;
}}

\newcommand{\forestVT}{
\tikz[planar forest ] {

\node [b] at (0.0, 0.0) {  } 
child {node [l] at (0.0, 1.0) { $\times$ }  
}
;
}}

\newcommand{\forestWT}{
\tikz[planar forest ] {

\node [b] at (0.0, 0.0) {  } 
child {node [b] at (-1.0, 1.0) {  }  
}
child {node [l] at (0.0, 1.0) { $\times$ }  
}
child {node [b] at (1.0, 1.0) {  }  
child {node [b] at (0.0, 1.0) {  }  
}
}
;
}}

\newcommand{\forestXT}{
\tikz[planar forest ] {

\node [b] at (0.0, 0.0) {  } 
child {node [b] at (-1.0, 1.0) {  }  
child {node [b] at (0.0, 1.0) {  }  
}
}
child {node [l] at (0.0, 1.0) { $\times$ }  
}
child {node [b] at (1.0, 1.0) {  }  
}
;
}}

\newcommand{\forestYT}{
\tikz[planar forest ] {

\node [b] at (0.0, 0.0) {  } 
child {node [b] at (-1.5, 1.0) {  }  
}
child {node [b] at (-0.5, 1.0) {  }  
}
child {node [l] at (0.5, 1.0) { $\times$ }  
}
child {node [b] at (1.5, 1.0) {  }  
}
;
}}

\newcommand{\forestAU}{
\tikz[planar forest ] {

\node [b] at (0.0, 0.0) {  } 
child {node [b] at (-1.0, 1.0) {  }  
}
child {node [b] at (0.0, 1.0) {  }  
}
child {node [l] at (1.0, 1.0) { $\times$ }  
}
;
}}

\newcommand{\forestBU}{
\tikz[planar forest ] {

\node [b] at (0.0, 0.0) {  } 
;
}}

\newcommand{\forestCU}{
\tikz[planar forest ] {

\node [b] at (0.0, 0.0) {  } 
child {node [b] at (-1.0, 1.0) {  }  
}
child {node [b] at (0.0, 1.0) {  }  
}
child {node [l] at (1.0, 1.0) { $\times$ }  
}
;
}}

\newcommand{\forestDU}{
\tikz[planar forest ] {

\node [b] at (0.0, 0.0) {  } 
child {node [l] at (0.0, 1.0) { $\times$ }  
}
;
}}

\newcommand{\forestEU}{
\tikz[planar forest ] {

\node [b] at (0.0, 0.0) {  } 
child {node [b] at (-1.0, 1.0) {  }  
}
child {node [b] at (0.0, 1.0) {  }  
child {node [b] at (0.0, 1.0) {  }  
}
}
child {node [l] at (1.0, 1.0) { $\times$ }  
}
;
}}

\newcommand{\forestFU}{
\tikz[planar forest ] {

\node [b] at (0.0, 0.0) {  } 
child {node [b] at (-1.0, 1.0) {  }  
child {node [b] at (0.0, 1.0) {  }  
}
}
child {node [b] at (0.0, 1.0) {  }  
}
child {node [l] at (1.0, 1.0) { $\times$ }  
}
;
}}

\newcommand{\forestGU}{
\tikz[planar forest ] {

\node [b] at (0.0, 0.0) {  } 
child {node [b] at (-1.5, 1.0) {  }  
}
child {node [b] at (-0.5, 1.0) {  }  
}
child {node [b] at (0.5, 1.0) {  }  
}
child {node [l] at (1.5, 1.0) { $\times$ }  
}
;
}}

\newcommand{\forestHU}{
\tikz[planar forest ] {

\node [b] at (0.0, 0.0) {  } 
child {node [l] at (-0.5, 1.0) { $\times$ }  
}
child {node [b] at (0.5, 1.0) {  }  
}
;
}}

\newcommand{\forestIU}{
\tikz[planar forest ] {

\node [b] at (0.0, 0.0) {  } 
child {node [l] at (0.0, 1.0) { $\times$ }  
}
;
}}

\newcommand{\forestJU}{
\tikz[planar forest ] {

\node [b] at (0.0, 0.0) {  } 
;
}}

\newcommand{\forestKU}{
\tikz[planar forest ] {

\node [b] at (0.0, 0.0) {  } 
child {node [l] at (-0.5, 1.0) { $\times$ }  
}
child {node [b] at (0.5, 1.0) {  }  
}
;
}}

\newcommand{\forestLU}{
\tikz[planar forest ] {

\node [b] at (0.0, 0.0) {  } 
child {node [l] at (0.0, 1.0) { $\times$ }  
}
;
}}

\newcommand{\forestMU}{
\tikz[planar forest ] {

\node [b] at (0.0, 0.0) {  } 
child {node [l] at (0.0, 1.0) { $\times$ }  
}
;
}}

\newcommand{\forestNU}{
\tikz[planar forest ] {

\node [b] at (0.0, 0.0) {  } 
child {node [l] at (-0.5, 1.0) { $\times$ }  
}
child {node [b] at (0.5, 1.0) {  }  
}
;
}}

\newcommand{\forestOU}{
\tikz[planar forest ] {

\node [b] at (0.0, 0.0) {  } 
child {node [b] at (-0.5, 1.0) {  }  
}
child {node [l] at (0.5, 1.0) { $\times$ }  
}
;
}}

\newcommand{\forestPU}{
\tikz[planar forest ] {

\node [b] at (0.0, 0.0) {  } 
child {node [l] at (-0.5, 1.0) { $\times$ }  
}
child {node [b] at (0.5, 1.0) {  }  
child {node [b] at (0.0, 1.0) {  }  
}
}
;
}}

\newcommand{\forestQU}{
\tikz[planar forest ] {

\node [b] at (0.0, 0.0) {  } 
child {node [l] at (0.0, 1.0) { $\times$ }  
}
;
}}

\newcommand{\forestRU}{
\tikz[planar forest ] {

\node [b] at (0.0, 0.0) {  } 
child {node [b] at (-1.0, 1.0) {  }  
}
child {node [l] at (0.0, 1.0) { $\times$ }  
}
child {node [b] at (1.0, 1.0) {  }  
}
;
}}

\newcommand{\forestSU}{
\tikz[planar forest ] {

\node [b] at (0.0, 0.0) {  } 
child {node [l] at (0.0, 1.0) { $\times$ }  
}
;
}}

\newcommand{\forestTU}{
\tikz[planar forest ] {

\node [b] at (0.0, 0.0) {  } 
child {node [b] at (-0.5, 1.0) {  }  
}
child {node [l] at (0.5, 1.0) { $\times$ }  
}
;
}}

\newcommand{\forestUU}{
\tikz[planar forest ] {

\node [b] at (0.0, 0.0) {  } 
child {node [l] at (0.0, 1.0) { $\times$ }  
}
;
}}

\newcommand{\forestVU}{
\tikz[planar forest ] {

\node [b] at (0.0, 0.0) {  } 
;
}}

\newcommand{\forestWU}{
\tikz[planar forest ] {

\node [b] at (0.0, 0.0) {  } 
child {node [b] at (-0.5, 1.0) {  }  
}
child {node [l] at (0.5, 1.0) { $\times$ }  
}
;
}}

\newcommand{\forestXU}{
\tikz[planar forest ] {

\node [b] at (0.0, 0.0) {  } 
child {node [l] at (0.0, 1.0) { $\times$ }  
}
;
}}

\newcommand{\forestYU}{
\tikz[planar forest ] {

\node [b] at (0.0, 0.0) {  } 
child {node [l] at (0.0, 1.0) { $\times$ }  
}
;
}}

\newcommand{\forestAV}{
\tikz[planar forest ] {

\node [b] at (0.0, 0.0) {  } 
child {node [b] at (-0.5, 1.0) {  }  
}
child {node [l] at (0.5, 1.0) { $\times$ }  
}
;
}}

\newcommand{\forestBV}{
\tikz[planar forest ] {

\node [b] at (0.0, 0.0) {  } 
child {node [b] at (-0.5, 1.0) {  }  
}
child {node [l] at (0.5, 1.0) { $\times$ }  
}
;
}}

\newcommand{\forestCV}{
\tikz[planar forest ] {

\node [b] at (0.0, 0.0) {  } 
child {node [b] at (-0.5, 1.0) {  }  
child {node [b] at (0.0, 1.0) {  }  
}
}
child {node [l] at (0.5, 1.0) { $\times$ }  
}
;
}}

\newcommand{\forestDV}{
\tikz[planar forest ] {

\node [b] at (0.0, 0.0) {  } 
child {node [l] at (0.0, 1.0) { $\times$ }  
}
;
}}

\newcommand{\forestEV}{
\tikz[planar forest ] {

\node [b] at (0.0, 0.0) {  } 
child {node [b] at (-1.0, 1.0) {  }  
}
child {node [b] at (0.0, 1.0) {  }  
}
child {node [l] at (1.0, 1.0) { $\times$ }  
}
;
}}

\newcommand{\forestFV}{
\tikz[planar forest ] {

\node [b] at (0.0, 0.0) {  } 
child {node [l] at (0.0, 1.0) { $\times$ }  
}
;
}}

\newcommand{\forestGV}{
\tikz[planar forest ] {

\node [b] at (0.0, 0.0) {  } 
child {node [l] at (0.0, 1.0) { $\times$ }  
}
;
}}

\newcommand{\forestHV}{
\tikz[planar forest ] {

\node [b] at (0.0, 0.0) {  } 
child {node [l] at (0.0, 1.0) { $\times$ }  
}
;
}}

\newcommand{\forestIV}{
\tikz[planar forest ] {

\node [b] at (0.0, 0.0) {  } 
child {node [l] at (0.0, 1.0) { $\times$ }  
}
;
}}

\newcommand{\forestJV}{
\tikz[planar forest ] {

\node [b] at (0.0, 0.0) {  } 
;
}}

\newcommand{\forestKV}{
\tikz[planar forest ] {

\node [b] at (0.0, 0.0) {  } 
child {node [l] at (0.0, 1.0) { $\times$ }  
}
;
}}

\newcommand{\forestLV}{
\tikz[planar forest ] {

\node [b] at (0.0, 0.0) {  } 
child {node [l] at (0.0, 1.0) { $\times$ }  
}
;
}}

\newcommand{\forestMV}{
\tikz[planar forest ] {

\node [b] at (0.0, 0.0) {  } 
child {node [l] at (0.0, 1.0) { $\times$ }  
}
;
}}

\newcommand{\forestNV}{
\tikz[planar forest ] {

\node [b] at (0.0, 0.0) {  } 
child {node [l] at (0.0, 1.0) { $\times$ }  
}
;
}}

\newcommand{\forestOV}{
\tikz[planar forest ] {

\node [b] at (0.0, 0.0) {  } 
child {node [l] at (0.0, 1.0) { $\times$ }  
}
;
}}

\newcommand{\forestPV}{
\tikz[planar forest ] {

\node [b] at (0.0, 0.0) {  } 
child {node [l] at (0.0, 1.0) { $\times$ }  
}
;
}}

\newcommand{\forestQV}{
\tikz[planar forest ] {

\node [b] at (0.0, 0.0) {  } 
child {node [b] at (-0.5, 1.0) {  }  
}
child {node [l] at (0.5, 1.0) { $\times$ }  
}
;
}}

\newcommand{\forestRV}{
\tikz[planar forest ] {

\node [b] at (0.0, 0.0) {  } 
child {node [l] at (-0.5, 1.0) { $\times$ }  
}
child {node [b] at (0.5, 1.0) {  }  
}
;
}}

\newcommand{\forestSV}{
\tikz[planar forest ] {

\node [b] at (0.0, 0.0) {  } 
child {node [l] at (0.0, 1.0) { $\times$ }  
}
;
}}

\newcommand{\forestTV}{
\tikz[planar forest ] {

\node [b] at (0.0, 0.0) {  } 
;
}}

\newcommand{\forestUV}{
\tikz[planar forest ] {

\node [b] at (0.0, 0.0) {  } 
child {node [l] at (-0.5, 1.0) { $\times$ }  
}
child {node [b] at (0.5, 1.0) {  }  
}
;
}}

\newcommand{\forestVV}{
\tikz[planar forest ] {

\node [b] at (0.0, 0.0) {  } 
child {node [l] at (0.0, 1.0) { $\times$ }  
}
;
}}

\newcommand{\forestWV}{
\tikz[planar forest ] {

\node [b] at (0.0, 0.0) {  } 
child {node [l] at (0.0, 1.0) { $\times$ }  
}
;
}}

\newcommand{\forestXV}{
\tikz[planar forest ] {

\node [b] at (0.0, 0.0) {  } 
child {node [l] at (-0.5, 1.0) { $\times$ }  
}
child {node [b] at (0.5, 1.0) {  }  
}
;
}}

\newcommand{\forestYV}{
\tikz[planar forest ] {

\node [b] at (0.0, 0.0) {  } 
child {node [b] at (-0.5, 1.0) {  }  
}
child {node [l] at (0.5, 1.0) { $\times$ }  
}
;
}}

\newcommand{\forestAW}{
\tikz[planar forest ] {

\node [b] at (0.0, 0.0) {  } 
child {node [l] at (-0.5, 1.0) { $\times$ }  
}
child {node [b] at (0.5, 1.0) {  }  
child {node [b] at (0.0, 1.0) {  }  
}
}
;
}}

\newcommand{\forestBW}{
\tikz[planar forest ] {

\node [b] at (0.0, 0.0) {  } 
child {node [l] at (0.0, 1.0) { $\times$ }  
}
;
}}

\newcommand{\forestCW}{
\tikz[planar forest ] {

\node [b] at (0.0, 0.0) {  } 
child {node [b] at (-1.0, 1.0) {  }  
}
child {node [l] at (0.0, 1.0) { $\times$ }  
}
child {node [b] at (1.0, 1.0) {  }  
}
;
}}

\newcommand{\forestDW}{
\tikz[planar forest ] {

\node [b] at (0.0, 0.0) {  } 
child {node [l] at (0.0, 1.0) { $\times$ }  
}
;
}}

\newcommand{\forestEW}{
\tikz[planar forest ] {

\node [b] at (0.0, 0.0) {  } 
child {node [b] at (-0.5, 1.0) {  }  
}
child {node [l] at (0.5, 1.0) { $\times$ }  
}
;
}}

\newcommand{\forestFW}{
\tikz[planar forest ] {

\node [b] at (0.0, 0.0) {  } 
child {node [l] at (0.0, 1.0) { $\times$ }  
}
;
}}

\newcommand{\forestGW}{
\tikz[planar forest ] {

\node [b] at (0.0, 0.0) {  } 
;
}}

\newcommand{\forestHW}{
\tikz[planar forest ] {

\node [b] at (0.0, 0.0) {  } 
child {node [b] at (-0.5, 1.0) {  }  
}
child {node [l] at (0.5, 1.0) { $\times$ }  
}
;
}}

\newcommand{\forestIW}{
\tikz[planar forest ] {

\node [b] at (0.0, 0.0) {  } 
child {node [l] at (0.0, 1.0) { $\times$ }  
}
;
}}

\newcommand{\forestJW}{
\tikz[planar forest ] {

\node [b] at (0.0, 0.0) {  } 
child {node [l] at (0.0, 1.0) { $\times$ }  
}
;
}}

\newcommand{\forestKW}{
\tikz[planar forest ] {

\node [b] at (0.0, 0.0) {  } 
child {node [b] at (-0.5, 1.0) {  }  
}
child {node [l] at (0.5, 1.0) { $\times$ }  
}
;
}}

\newcommand{\forestLW}{
\tikz[planar forest ] {

\node [b] at (0.0, 0.0) {  } 
child {node [b] at (-0.5, 1.0) {  }  
}
child {node [l] at (0.5, 1.0) { $\times$ }  
}
;
}}

\newcommand{\forestMW}{
\tikz[planar forest ] {

\node [b] at (0.0, 0.0) {  } 
child {node [b] at (-0.5, 1.0) {  }  
child {node [b] at (0.0, 1.0) {  }  
}
}
child {node [l] at (0.5, 1.0) { $\times$ }  
}
;
}}

\newcommand{\forestNW}{
\tikz[planar forest ] {

\node [b] at (0.0, 0.0) {  } 
child {node [l] at (0.0, 1.0) { $\times$ }  
}
;
}}

\newcommand{\forestOW}{
\tikz[planar forest ] {

\node [b] at (0.0, 0.0) {  } 
child {node [b] at (-1.0, 1.0) {  }  
}
child {node [b] at (0.0, 1.0) {  }  
}
child {node [l] at (1.0, 1.0) { $\times$ }  
}
;
}}

\newcommand{\forestPW}{
\tikz[planar forest ] {

\node [b] at (0.0, 0.0) {  } 
child {node [l] at (0.0, 1.0) { $\times$ }  
}
;
}}

\newcommand{\forestQW}{
\tikz[planar forest ] {

\node [b] at (0.0, 0.0) {  } 
child {node [l] at (0.0, 1.0) { $\times$ }  
}
;
}}

\newcommand{\forestRW}{
\tikz[planar forest ] {

\node [b] at (0.0, 0.0) {  } 
child {node [l] at (0.0, 1.0) { $\times$ }  
}
;
}}

\newcommand{\forestSW}{
\tikz[planar forest ] {

\node [b] at (0.0, 0.0) {  } 
child {node [l] at (0.0, 1.0) { $\times$ }  
}
;
}}

\newcommand{\forestTW}{
\tikz[planar forest ] {

\node [b] at (0.0, 0.0) {  } 
;
}}

\newcommand{\forestUW}{
\tikz[planar forest ] {

\node [b] at (0.0, 0.0) {  } 
child {node [l] at (0.0, 1.0) { $\times$ }  
}
;
}}

\newcommand{\forestVW}{
\tikz[planar forest ] {

\node [b] at (0.0, 0.0) {  } 
child {node [l] at (0.0, 1.0) { $\times$ }  
}
;
}}

\newcommand{\forestWW}{
\tikz[planar forest ] {

\node [b] at (0.0, 0.0) {  } 
child {node [l] at (0.0, 1.0) { $\times$ }  
}
;
}}

\newcommand{\forestXW}{
\tikz[planar forest ] {

\node [b] at (0.0, 0.0) {  } 
child {node [l] at (0.0, 1.0) { $\times$ }  
}
;
}}

\newcommand{\forestYW}{
\tikz[planar forest ] {

\node [b] at (0.0, 0.0) {  } 
child {node [l] at (0.0, 1.0) { $\times$ }  
}
;
}}

\newcommand{\forestAX}{
\tikz[planar forest ] {

\node [b] at (0.0, 0.0) {  } 
child {node [l] at (0.0, 1.0) { $\times$ }  
}
;
}}

\newcommand{\forestBX}{
\tikz[planar forest ] {

\node [b] at (0.0, 0.0) {  } 
child {node [b] at (-0.5, 1.0) {  }  
}
child {node [l] at (0.5, 1.0) { $\times$ }  
}
;
}}

\newcommand{\forestCX}{
\tikz[planar forest ] {

\node [b] at (0.0, 0.0) {  } 
child {node [l] at (0.0, 1.0) { $\times$ }  
}
;
}}

\newcommand{\forestDX}{
\tikz[planar forest ] {

\node [b] at (0.0, 0.0) {  } 
child {node [b] at (-0.5, 1.0) {  }  
}
child {node [l] at (0.5, 1.0) { $\times$ }  
}
;
}}

\newcommand{\forestEX}{
\tikz[planar forest ] {

\node [b] at (0.0, 0.0) {  } 
child {node [l] at (0.0, 1.0) { $\times$ }  
}
;
}}

\newcommand{\forestFX}{
\tikz[planar forest ] {

\node [b] at (0.0, 0.0) {  } 
child {node [l] at (0.0, 1.0) { $\times$ }  
}
;
}}

\newcommand{\forestGX}{
\tikz[planar forest ] {

\node [b] at (0.0, 0.0) {  } 
child {node [l] at (0.0, 1.0) { $\times$ }  
}
;
}}

\newcommand{\forestHX}{
\tikz[planar forest ] {

\node [b] at (0.0, 0.0) {  } 
child {node [l] at (0.0, 1.0) { $\times$ }  
}
;
}}

\newcommand{\forestIX}{
\tikz[planar forest ] {

\node [b] at (0.0, 0.0) {  } 
;
}}

\newcommand{\forestJX}{
\tikz[planar forest ] {

\node [b] at (0.0, 0.0) {  } 
child {node [l] at (0.0, 1.0) { $\times$ }  
}
;
}}

\newcommand{\forestKX}{
\tikz[planar forest ] {

\node [b] at (0.0, 0.0) {  } 
child {node [l] at (0.0, 1.0) { $\times$ }  
}
;
}}

\newcommand{\forestLX}{
\tikz[planar forest ] {

\node [b] at (0.0, 0.0) {  } 
child {node [l] at (0.0, 1.0) { $\times$ }  
}
;
}}

\newcommand{\forestMX}{
\tikz[planar forest ] {

\node [b] at (0.0, 0.0) {  } 
child {node [l] at (0.0, 1.0) { $\times$ }  
}
;
}}

\newcommand{\forestNX}{
\tikz[planar forest ] {

\node [b] at (0.0, 0.0) {  } 
child {node [l] at (0.0, 1.0) { $\times$ }  
}
;
}}

\newcommand{\forestOX}{
\tikz[planar forest ] {

\node [b] at (0.0, 0.0) {  } 
child {node [l] at (0.0, 1.0) { $\times$ }  
}
;
}}

\newcommand{\forestPX}{
\tikz[planar forest ] {

\node [b] at (0.0, 0.0) {  } 
child {node [b] at (-0.5, 1.0) {  }  
}
child {node [l] at (0.5, 1.0) { $\times$ }  
}
;
}}

\newcommand{\forestQX}{
\tikz[planar forest ] {

\node [b] at (0.0, 0.0) {  } 
;
}}

\newcommand{\forestRX}{
\tikz[planar forest ] {

\node [b] at (0.0, 0.0) {  } 
child {node [b] at (0.0, 1.0) {  }  
child {node [b] at (0.0, 1.0) {  }  
}
}
;
}}

\newcommand{\forestSX}{
\tikz[planar forest ] {

\node [b] at (0.0, 0.0) {  } 
child {node [l] at (0.0, 1.0) { $\times$ }  
}
;
}}

\newcommand{\forestTX}{
\tikz[planar forest ] {

\node [b] at (0.0, 0.0) {  } 
child {node [l] at (0.0, 1.0) { $\times$ }  
}
;
}}

\newcommand{\forestUX}{
\tikz[planar forest ] {

\node [b] at (0.0, 0.0) {  } 
child {node [b] at (-0.5, 1.0) {  }  
}
child {node [l] at (0.5, 1.0) { $\times$ }  
}
;
}}

\newcommand{\forestVX}{
\tikz[planar forest ] {

\node [b] at (0.0, 0.0) {  } 
child {node [l] at (0.0, 1.0) { $\times$ }  
}
;
}}

\newcommand{\forestWX}{
\tikz[planar forest ] {

\node [b] at (0.0, 0.0) {  } 
child {node [l] at (0.0, 1.0) { $\times$ }  
}
;
}}

\newcommand{\forestXX}{
\tikz[planar forest ] {

\node [b] at (0.0, 0.0) {  } 
child {node [l] at (-0.5, 1.0) { $\times$ }  
}
child {node [b] at (0.5, 1.0) {  }  
}
;
}}

\newcommand{\forestYX}{
\tikz[planar forest ] {

\node [b] at (0.0, 0.0) {  } 
child {node [l] at (0.0, 1.0) { $\times$ }  
}
;
}}

\newcommand{\forestAY}{
\tikz[planar forest ] {

\node [b] at (0.0, 0.0) {  } 
child {node [b] at (-1.0, 1.0) {  }  
}
child {node [l] at (0.0, 1.0) { $\times$ }  
}
child {node [b] at (1.0, 1.0) {  }  
}
;
}}

\newcommand{\forestBY}{
\tikz[planar forest ] {

\node [b] at (0.0, 0.0) {  } 
child {node [l] at (0.0, 1.0) { $\times$ }  
}
;
}}

\newcommand{\forestCY}{
\tikz[planar forest ] {

\node [b] at (0.0, 0.0) {  } 
child {node [l] at (-1.0, 1.0) { $\times$ }  
}
child {node [b] at (0.0, 1.0) {  }  
}
child {node [b] at (1.0, 1.0) {  }  
}
;
}}

\newcommand{\forestDY}{
\tikz[planar forest ] {

\node [b] at (0.0, 0.0) {  } 
child {node [b] at (-1.0, 1.0) {  }  
}
child {node [l] at (0.0, 1.0) { $\times$ }  
}
child {node [b] at (1.0, 1.0) {  }  
child {node [b] at (0.0, 1.0) {  }  
}
}
;
}}

\newcommand{\forestEY}{
\tikz[planar forest ] {

\node [b] at (0.0, 0.0) {  } 
child {node [l] at (-1.0, 1.0) { $\times$ }  
}
child {node [b] at (0.0, 1.0) {  }  
}
child {node [b] at (1.0, 1.0) {  }  
child {node [b] at (0.0, 1.0) {  }  
}
}
;
}}

\newcommand{\forestFY}{
\tikz[planar forest ] {

\node [b] at (0.0, 0.0) {  } 
child {node [l] at (-0.5, 1.0) { $\times$ }  
}
child {node [b] at (0.5, 1.0) {  }  
child {node [b] at (0.0, 1.0) {  }  
child {node [b] at (0.0, 1.0) {  }  
}
}
}
;
}}

\newcommand{\forestGY}{
\tikz[planar forest ] {

\node [b] at (0.0, 0.0) {  } 
child {node [b] at (-0.5, 1.0) {  }  
child {node [b] at (0.0, 1.0) {  }  
child {node [b] at (0.0, 1.0) {  }  
}
}
}
child {node [l] at (0.5, 1.0) { $\times$ }  
}
;
}}

\newcommand{\forestHY}{
\tikz[planar forest ] {

\node [b] at (0.0, 0.0) {  } 
;
}}

\newcommand{\forestIY}{
\tikz[planar forest ] {

\node [b] at (0.0, 0.0) {  } 
child {node [b] at (-0.5, 1.0) {  }  
}
child {node [b] at (0.5, 1.0) {  }  
}
;
}}

\newcommand{\forestJY}{
\tikz[planar forest ] {

\node [b] at (0.0, 0.0) {  } 
child {node [b] at (-0.5, 1.0) {  }  
}
child {node [l] at (0.5, 1.0) { $\times$ }  
}
;
}}

\newcommand{\forestKY}{
\tikz[planar forest ] {

\node [b] at (0.0, 0.0) {  } 
child {node [b] at (-0.5, 1.0) {  }  
}
child {node [l] at (0.5, 1.0) { $\times$ }  
}
;
}}

\newcommand{\forestLY}{
\tikz[planar forest ] {

\node [b] at (0.0, 0.0) {  } 
child {node [l] at (-0.5, 1.0) { $\times$ }  
}
child {node [b] at (0.5, 1.0) {  }  
}
;
}}

\newcommand{\forestMY}{
\tikz[planar forest ] {

\node [b] at (0.0, 0.0) {  } 
child {node [l] at (-0.5, 1.0) { $\times$ }  
}
child {node [b] at (0.5, 1.0) {  }  
}
;
}}

\newcommand{\forestNY}{
\tikz[planar forest ] {

\node [b] at (0.0, 0.0) {  } 
child {node [b] at (-1.5, 1.0) {  }  
}
child {node [l] at (-0.5, 1.0) { $\times$ }  
}
child {node [b] at (0.5, 1.0) {  }  
}
child {node [b] at (1.5, 1.0) {  }  
}
;
}}

\newcommand{\forestOY}{
\tikz[planar forest ] {

\node [b] at (0.0, 0.0) {  } 
child {node [l] at (-1.5, 1.0) { $\times$ }  
}
child {node [b] at (-0.5, 1.0) {  }  
}
child {node [b] at (0.5, 1.0) {  }  
}
child {node [b] at (1.5, 1.0) {  }  
}
;
}}

\newcommand{\forestPY}{
\tikz[planar forest ] {

\node [b] at (0.0, 0.0) {  } 
child {node [l] at (-0.5, 1.0) { $\times$ }  
}
child {node [b] at (0.5, 1.0) {  }  
child {node [b] at (-0.5, 1.0) {  }  
}
child {node [b] at (0.5, 1.0) {  }  
}
}
;
}}

\newcommand{\forestQY}{
\tikz[planar forest ] {

\node [b] at (0.0, 0.0) {  } 
child {node [b] at (-0.5, 1.0) {  }  
child {node [b] at (-0.5, 1.0) {  }  
}
child {node [b] at (0.5, 1.0) {  }  
}
}
child {node [l] at (0.5, 1.0) { $\times$ }  
}
;
}}

\newcommand{\forestRY}{
\tikz[planar forest ] {

\node [b] at (0.0, 0.0) {  } 
child {node [l] at (0.0, 1.0) { $\times$ }  
}
;
}}

\newcommand{\forestSY}{
\tikz[planar forest ] {

\node [b] at (0.0, 0.0) {  } 
;
}}

\newcommand{\forestTY}{
\tikz[planar forest ] {

\node [b] at (0.0, 0.0) {  } 
child {node [b] at (0.0, 1.0) {  }  
}
;
}}

\newcommand{\forestUY}{
\tikz[planar forest ] {

\node [b] at (0.0, 0.0) {  } 
child {node [l] at (0.0, 1.0) { $\times$ }  
}
;
}}

\newcommand{\forestVY}{
\tikz[planar forest ] {

\node [b] at (0.0, 0.0) {  } 
child {node [l] at (0.0, 1.0) { $\times$ }  
}
;
}}

\newcommand{\forestWY}{
\tikz[planar forest ] {

\node [b] at (0.0, 0.0) {  } 
child {node [b] at (-0.5, 1.0) {  }  
}
child {node [l] at (0.5, 1.0) { $\times$ }  
}
;
}}

\newcommand{\forestXY}{
\tikz[planar forest ] {

\node [b] at (0.0, 0.0) {  } 
child {node [l] at (0.0, 1.0) { $\times$ }  
}
;
}}

\newcommand{\forestYY}{
\tikz[planar forest ] {

\node [b] at (0.0, 0.0) {  } 
child {node [l] at (0.0, 1.0) { $\times$ }  
}
;
}}

\newcommand{\forestAAB}{
\tikz[planar forest ] {

\node [b] at (0.0, 0.0) {  } 
child {node [l] at (-0.5, 1.0) { $\times$ }  
}
child {node [b] at (0.5, 1.0) {  }  
}
;
}}

\newcommand{\forestBAB}{
\tikz[planar forest ] {

\node [b] at (0.0, 0.0) {  } 
child {node [l] at (0.0, 1.0) { $\times$ }  
}
;
}}

\newcommand{\forestCAB}{
\tikz[planar forest ] {

\node [b] at (0.0, 0.0) {  } 
child {node [b] at (-1.0, 1.0) {  }  
}
child {node [l] at (0.0, 1.0) { $\times$ }  
}
child {node [b] at (1.0, 1.0) {  }  
}
;
}}

\newcommand{\forestDAB}{
\tikz[planar forest ] {

\node [b] at (0.0, 0.0) {  } 
child {node [l] at (0.0, 1.0) { $\times$ }  
}
;
}}

\newcommand{\forestEAB}{
\tikz[planar forest ] {

\node [b] at (0.0, 0.0) {  } 
child {node [l] at (-1.0, 1.0) { $\times$ }  
}
child {node [b] at (0.0, 1.0) {  }  
}
child {node [b] at (1.0, 1.0) {  }  
}
;
}}

\newcommand{\forestFAB}{
\tikz[planar forest ] {

\node [b] at (0.0, 0.0) {  } 
child {node [l] at (0.0, 1.0) { $\times$ }  
}
;
}}

\newcommand{\forestGAB}{
\tikz[planar forest ] {

\node [b] at (0.0, 0.0) {  } 
child {node [l] at (-0.5, 1.0) { $\times$ }  
}
child {node [b] at (0.5, 1.0) {  }  
child {node [b] at (0.0, 1.0) {  }  
}
}
;
}}

\newcommand{\forestHAB}{
\tikz[planar forest ] {

\node [b] at (0.0, 0.0) {  } 
child {node [l] at (0.0, 1.0) { $\times$ }  
}
;
}}

\newcommand{\forestIAB}{
\tikz[planar forest ] {

\node [b] at (0.0, 0.0) {  } 
child {node [b] at (-0.5, 1.0) {  }  
child {node [b] at (0.0, 1.0) {  }  
}
}
child {node [l] at (0.5, 1.0) { $\times$ }  
}
;
}}

\newcommand{\forestJAB}{
\tikz[planar forest ] {

\node [b] at (0.0, 0.0) {  } 
child {node [b] at (-1.0, 1.0) {  }  
}
child {node [b] at (0.0, 1.0) {  }  
child {node [b] at (0.0, 1.0) {  }  
}
}
child {node [l] at (1.0, 1.0) { $\times$ }  
}
;
}}

\newcommand{\forestKAB}{
\tikz[planar forest ] {

\node [b] at (0.0, 0.0) {  } 
child {node [b] at (-1.0, 1.0) {  }  
child {node [b] at (0.0, 1.0) {  }  
}
}
child {node [b] at (0.0, 1.0) {  }  
}
child {node [l] at (1.0, 1.0) { $\times$ }  
}
;
}}

\newcommand{\forestLAB}{
\tikz[planar forest ] {

\node [b] at (0.0, 0.0) {  } 
;
}}

\newcommand{\forestMAB}{
\tikz[planar forest ] {

\node [b] at (0.0, 0.0) {  } 
;
}}

\newcommand{\forestNAB}{
\tikz[planar forest ] {

\node [b] at (0.0, 0.0) {  } 
child {node [b] at (0.0, 1.0) {  }  
}
;
}}

\newcommand{\forestOAB}{
\tikz[planar forest ] {

\node [b] at (0.0, 0.0) {  } 
child {node [b] at (-1.0, 1.0) {  }  
child {node [b] at (0.0, 1.0) {  }  
}
}
child {node [b] at (0.0, 1.0) {  }  
}
child {node [l] at (1.0, 1.0) { $\times$ }  
}
;
}}

\newcommand{\forestPAB}{
\tikz[planar forest ] {

\node [b] at (0.0, 0.0) {  } 
child {node [b] at (-1.0, 1.0) {  }  
child {node [b] at (0.0, 1.0) {  }  
}
}
child {node [l] at (0.0, 1.0) { $\times$ }  
}
child {node [b] at (1.0, 1.0) {  }  
}
;
}}

\newcommand{\forestQAB}{
\tikz[planar forest ] {

\node [b] at (0.0, 0.0) {  } 
child {node [b] at (-1.0, 1.0) {  }  
}
child {node [l] at (0.0, 1.0) { $\times$ }  
}
child {node [b] at (1.0, 1.0) {  }  
child {node [b] at (0.0, 1.0) {  }  
}
}
;
}}

\newcommand{\forestRAB}{
\tikz[planar forest ] {

\node [b] at (0.0, 0.0) {  } 
child {node [l] at (-1.0, 1.0) { $\times$ }  
}
child {node [b] at (0.0, 1.0) {  }  
}
child {node [b] at (1.0, 1.0) {  }  
child {node [b] at (0.0, 1.0) {  }  
}
}
;
}}

\newcommand{\forestSAB}{
\tikz[planar forest ] {

\node [b] at (0.0, 0.0) {  } 
child {node [b] at (-1.0, 1.0) {  }  
}
child {node [b] at (0.0, 1.0) {  }  
child {node [b] at (0.0, 1.0) {  }  
}
}
child {node [l] at (1.0, 1.0) { $\times$ }  
}
;
}}

\newcommand{\forestTAB}{
\tikz[planar forest ] {

\node [b] at (0.0, 0.0) {  } 
child {node [l] at (-1.0, 1.0) { $\times$ }  
}
child {node [b] at (0.0, 1.0) {  }  
child {node [b] at (0.0, 1.0) {  }  
}
}
child {node [b] at (1.0, 1.0) {  }  
}
;
}}

\newcommand{\forestUAB}{
\tikz[planar forest ] {

\node [b] at (0.0, 0.0) {  } 
child {node [b] at (0.0, 1.0) {  }  
child {node [b] at (-1.0, 1.0) {  }  
}
child {node [b] at (0.0, 1.0) {  }  
}
child {node [l] at (1.0, 1.0) { $\times$ }  
}
}
;
}}

\newcommand{\forestVAB}{
\tikz[planar forest ] {

\node [b] at (0.0, 0.0) {  } 
child {node [b] at (0.0, 1.0) {  }  
child {node [l] at (-1.0, 1.0) { $\times$ }  
}
child {node [b] at (0.0, 1.0) {  }  
}
child {node [b] at (1.0, 1.0) {  }  
}
}
;
}}

\newcommand{\forestWAB}{
\tikz[planar forest ] {

\node [b] at (0.0, 0.0) {  } 
child {node [b] at (0.0, 1.0) {  }  
child {node [b] at (-1.0, 1.0) {  }  
}
child {node [l] at (0.0, 1.0) { $\times$ }  
}
child {node [b] at (1.0, 1.0) {  }  
}
}
;
}}

\newcommand{\forestXAB}{
\tikz[planar forest ] {

\node [b] at (0.0, 0.0) {  } 
child {node [l] at (-1.5, 1.0) { $\times$ }  
}
child {node [b] at (-0.5, 1.0) {  }  
}
child {node [b] at (0.5, 1.0) {  }  
}
child {node [b] at (1.5, 1.0) {  }  
}
;
}}

\newcommand{\forestYAB}{
\tikz[planar forest ] {

\node [b] at (0.0, 0.0) {  } 
child {node [b] at (-1.5, 1.0) {  }  
}
child {node [b] at (-0.5, 1.0) {  }  
}
child {node [l] at (0.5, 1.0) { $\times$ }  
}
child {node [b] at (1.5, 1.0) {  }  
}
;
}}

\newcommand{\forestABB}{
\tikz[planar forest ] {

\node [b] at (0.0, 0.0) {  } 
child {node [b] at (-1.5, 1.0) {  }  
}
child {node [l] at (-0.5, 1.0) { $\times$ }  
}
child {node [b] at (0.5, 1.0) {  }  
}
child {node [b] at (1.5, 1.0) {  }  
}
;
}}

\DeclareMathOperator{\Vol}{Vol}
\DeclareMathOperator{\Lie}{Lie}

\newcommand{\graft}{\curvearrowright}

\newcommand{\XM}{\mathfrak{X}(\MM)}
\newcommand{\FM}{\mathcal{C^\infty}(\MM)}
\newcommand{\tri}{\triangleright}

\newcommand{\Tfe}{\TT_C^0}

\allowdisplaybreaks

\newtheorem{theorem}{Theorem}[section]
\newtheorem{definition}[theorem]{Definition}
\newtheorem*{definition*}{Definition}

\newtheorem{proposition}[theorem]{Proposition}
\newtheorem{lemma}[theorem]{Lemma}
\newtheorem{remark}[theorem]{Remark}
\newtheorem*{remark*}{Remark}

\newtheorem*{remarks*}{Remarks}

\newtheorem*{notation*}{Notation}
\newtheorem{ex}[theorem]{Example}
\newtheorem*{ex*}{Example}

\newtheorem*{exs*}{Examples}

\newtheorem*{app*}{Application}
\newtheorem{conjecture*}{Conjecture}

\newcommand{\ppr}{\odot}
\newcommand{\TM}{\operatorname{TM}}
\renewcommand{\tr}{\triangleright}
\newcommand{\g}{\mathfrak{g}}

\title{The free tracial post-Lie-Rinehart algebra of planar aromatic trees for the design of divergence-free Lie-group methods
}

\author{
Adrien Busnot Laurent\textsuperscript{1}, Hans Munthe-Kaas\textsuperscript{2} and Venkatesh~G.~S.\textsuperscript{2}
}

\begin{document}
\footnotetext[1]{
Univ Rennes, INRIA (Research team MINGuS), IRMAR (CNRS UMR 6625) and ENS Rennes, France.
Adrien.Busnot-Laurent@inria.fr.}
\footnotetext[2]{Department of Mathematics and Statistics, UiT – The Arctic University of Norway, Tromsø, Norway.
Hans.Munthe-Kaas@uib.no, Subbarao.V.Guggilam@uit.no.}

\maketitle

\begin{abstract}
Aromatic Butcher series were successfully introduced for the study and design of numerical integrators that preserve volume while solving differential equations in Euclidean spaces. They are naturally associated to pre-Lie-Rinehart algebras and pre-Hopf algebroids structures, and aromatic trees were shown to form the free tracial pre-Lie-Rinehart algebra.
In this paper, we present the generalisation of aromatic trees for the study of divergence-free integrators on manifolds. We introduce planar aromatic trees, prove that they span the free tracial post-Lie-Rinehart algebra, and apply them for deriving new Lie-group methods that preserve geometric divergence-free features up to a high order of accuracy.

\smallskip

\noindent
{\it Keywords:\,} geometric numerical integration, Lie-Butcher series, aromatic trees, divergence-free, volume-preservation.
\smallskip

\noindent
{\it AMS subject classification (2020):\,} 41A58, 65L06, 37M15, 05C05, 16T05.
\end{abstract}


\section{Introduction}

Butcher  trees and series were introduced in the 60's for the creation of high-order integration methods \cite{Butcher63cft, Butcher69teo, Butcher72aat, Hairer06gni}. Several extensions of the initial formalism were then introduced for the development of geometric numerical integration.
In particular, planar trees and Lie-Butcher series were used for designing Lie-group methods \cite{Iserles00lgm} and aromatic trees and aromatic B-series showed to be crucial tools for the study of volume-preserving integrators \cite{Chartier07pfi, Iserles07bsm}.
The creation of numerical methods on manifolds that preserve geometric invariants is an active field of research. In particular, there is, to the best of our knowledge, no existing work discussing the preservation of volume for intrinsic methods on manifolds.
In general, the Lie-group methods (Lie-Runge-Kutta, Crouch-Grossman or Runge-Kutta-Munthe-Kaas methods) and more generally LB-series methods, do not preserve volume, even in the Euclidean case \cite{Chartier07pfi,Iserles07bsm}.
The characterisation of Euclidean numerical volume-preservation is done through the use of backward error analysis \cite{Chartier05asl, Hairer06gni, Chartier10aso, Calaque11tih} and rewrites as the design of methods whose modified vector field satisfies $\Div(\tilde{f})=0$.
It is natural to consider aromatic modifications of LB-series methods to obtain divergence-free features for the numerical method at least up to a high order. A simple way to achieve high order of volume-preservation is obtained by considering high order-methods, but this approach is costly and sub-optimal.
Approaches for Euclidean pseudo-volume-preservation are first discussed in \cite{Bogfjellmo19aso, MuntheKaas16abs}.
There are other Euclidean techniques in the literature for building volume-preserving methods \cite{Shang94cov, Quispel95vpi}, but their complexity blows up with the dimension of the problem, opposite to the aromatic B-series approach.
The extension of the Euclidean results to intrinsic methods on manifolds have not been considered to the best of our knowledge.
In this paper, we introduce planar aromatic trees, study their algebraic structure, and apply them for the study and design of volume-preserving Lie-group methods. The analysis is successfully applied for the design of aromatic Lie-group methods with low convergence order and high order of volume preservation.

To motivate the algebraic formalism below, and its relation to geometry and numerical integration, we recall briefly the basic setup of Lie group integration and related numerical methods. Let $E\rightarrow M$ be a vector bundle over a manifold and $R=C^\infty(M,\R)$ and $L=\Gamma(E)$ the sections of the bundle (e.g.\ vector fields, tensor fields).  If $L$ is also equipped with a Lie bracket $[-,-]$ and a Lie algebra homomorphism $\rho\colon L\rightarrow \XM$ to the vector fields with the Jacobi bracket, we call $L$ a \emph{Lie algebroid}. An important example is the action algebroid obtained from the action of a Lie group on a manifold, which is the generic setup of numerical Lie group integration~\cite{MuntheKaas99hor}. In numerical algorithms we  define basic flows on $M$ which can be computed exactly, from which we develop more advanced integration algorithms. Such basic flows can often be described as geodesics of a connection. For the action algebroid, the canonical connection on $L$ is invariant with zero curvature and  parallel torsion. This is called a \emph{post-Lie algebroid}~\cite{MuntheKaas13opl}. In~\cite{MuntheKaas20icl} an intimate relationship between post-Lie algebroids and action algebroids is developed, and~\cite{AlKaabi22aao} explains why post-Lie algebroids are also fundamental in the understanding of geodesic flows of general non-invariant connections. 

There is a dual picture on the algebraic side, where the manifold $M$ is replaced by the commutative ring of scalar functions $R=C^\infty(M,\R)$, and vector bundles are studied through their space of sections $L=\Gamma(E)$, which  is algebraically described as an $R$-module, with the product of a scalar function with a section defined point wise. The Serre-Swan theorem states that the category of finitely generated projective modules over $R$ is a faithful representation of the category of vector bundles $E\rightarrow M$. Informally: \emph{`projective modules are like vector bundles'}. 
The geometric concept of Lie algebroids are described on the algebraic side as Lie-Rinehart algebras, 
and
post-Lie algebroids as \emph{tracial post-Lie-Rinehart algebras}, which is the main algebraic object of study in this paper. The trace condition, defined below,  is fulfilled for any finitely generated projective module, as those arising in the Serre-Swan theorem.

From the algebraic point of view, planar trees generate the free post-Lie algebra \cite{MuntheKaas13opl}, with numerous applications in various fields (see, for instance, \cite{Vallette07hog, Bai10ngl, Jacques23pla}). On the other hand, aromatic trees yield the free tracial pre-Lie-Rinehart algebra \cite{Floystad20tup}, and the use of aromas was recently extended to a variety of contexts \cite{Laurent21ocf, Zhu25aac}.
The natural generalisation of these structures is post-Lie-Rinehart algebras, and have been recently mentioned in \cite{Guo21iag, Jacques23pla, Busnot25pha, Rahm26tup}.
In numerical analysis, such structures are naturally used with their universal enveloping algebras. Post-Lie and pre-Lie-Rinehart algebras respectively give rise to post-Hopf algebras \cite{MuntheKaas08oth} and pre-Hopf algebroids \cite{Bogfjellmo19aso, Bronasco22cef}.
For post-Lie-Rinehart algebras, the universal enveloping algebra yields a post-Hopf algebroid \cite{Busnot25pha}.
We show in the present paper that the newly defined planar aromatic trees generate the free tracial post-Lie-Rinehart algebra.

The article is organized as follows.
In Section \ref{section:LR_algebra}, we recall the definition and properties of Lie-Rinehart algebra. We specify such structures given an affine connection to define tracial Lie-Rinehart algebras and the divergence in Section \ref{section:connection}.
In Section \ref{section:pLR_algebra}, we define the main algebraic structures of this article, that are, post-Lie-Rinehart algebras.
Section \ref{sec:planar_AT} introduces the planar aromatic trees, their algebraic properties, and shows that they are the free tracial post-Lie-Rinehart algebra.
The approach is applied successfully in Section \ref{sec:num} for the design of Lie-group methods that preserve divergence-free features up to a high order.
Future works are discussed in Section \ref{sec:Conclusion}.

\section{Lie-Rinehart Algebras, trace and divergence}\label{section:LR_algebra}
This section introduces the algebraic framework
needed to define divergence and trace in a coordinate-free setting.
In particular, we aim to encode the notion of divergence of vector
fields (and later, volume preservation) in purely algebraic terms
within Lie--Rinehart algebras. These constructions generalise
the classical notion of divergence of vector fields on a manifold.

\subsection{Lie-Rinehart Algebras}

Let $k$ be a field of characteristic $0$.

\begin{definition}\label{de:LR} A $k$-Lie—Rinehart algebra is a tuple $(R,L,\rho)$ where $R$ is a commutative $k$-algebra and $\left(L,\left[\cdot,\cdot\right]\right)$ is a $k$-Lie algebra such that:
\begin{enumerate}[label=(\roman*)]
\item $L$ is a $R$-module,

\item $R$ is a $L$-module where the action of $L$ on $R$ is given by the {\em anchor map} $\rho$ viz, a Lie morphism,
\begin{align*}
\rho : L \longrightarrow Der(R,R)	,
\end{align*}
where $Der(R,R)$ stands for Lie algebra of derivations from $R$ to $R$,

\item (Leibniz Rule) : For all $f \in R$ and $X,Y \in L$,
\begin{align*}
	\left[X, fY\right] = (X.f)Y + f\left[X,Y\right].
\end{align*}
\end{enumerate}
\end{definition}

\begin{remark}
	For the sake of brevity, the action of $L$ on $R$ via the anchor map is denoted as the following:
	\begin{align*}
		X.f = \rho(X)(f),\quad X \in L,\quad f \in R.
	\end{align*}
\end{remark}


\subsection{Connections}
\label{section:connection}

We refer to Appendix~A for a general discussion of  connections. An affine/linear connection on the Lie-Rinehart algebra can alternatively be described in a more elementary manner following~\cite{Floystad20tup} which the current paper also adheres to.

\begin{definition}\label{de:conn} A linear connection on a $R$-module $N$ with respect to Lie-Rinehart algebra $(R,L)$ is a $R$-linear map
\begin{align}\label{eq:conn}
\nabla : L &\longrightarrow \End_k(N)\\
         X &\longmapsto \nabla_X
\end{align}
such that for all $f \in R $ and $X,Y \in L$:
\begin{align}
\nabla_X(fY) = (X.f)Y + f\nabla_XY.
\end{align}
\end{definition}

Observe that the associative algebra $\End(N)$ can be endowed a Lie algebra structure via the commutator $\llbracket\cdot, \cdot\rrbracket$.

\begin{definition}\label{de:parallel-tensor} Let $N$ be a $R$-module equipped with a connection $\nabla$  with respect to Lie-Rinehart algebra $(R,L)$. An $m$ rank $k$-multilinear map $\mathcal{W} : N^{\otimes m} \longrightarrow M$ is called parallel if for all $X \in L$ and $Y_1,Y_2,\ldots, Y_m$
\begin{align}\label{eq:parallel-tensor}
\nabla_X(\mathcal{W}(Y_1,Y_2,\ldots,Y_m))&= \mathcal{W}(\nabla_XY_1,Y_2,\ldots,Y_m) + \mathcal{W}(Y_1,\nabla_XY_2,\ldots,Y_m) + \cdots  \\ 
&\hspace*{1.5 in} \cdots + \mathcal{W}(Y_1,Y_2,\ldots,\nabla_XY_m) \nonumber.
\end{align}
\end{definition}

\begin{definition}\label{de:curvature}
	The curvature of the connection $\nabla$ on the $R$-module $N$ is the alternating bilinear map defined as 
	\begin{align*}
		\mathcal{R} : L \wedge L  &\longrightarrow End(N) \\
		(X,Y) &\longmapsto \mathcal{R}(X,Y),
	\end{align*}  
	where $\mathcal{R}(X,Y) = \llbracket \nabla_X, \nabla_Y\rrbracket - \nabla_{\left[X,Y\right]}$.
\end{definition}

\begin{remark} The curvature $\mathcal{R} = 0 $ if and only if $N$ is a module over the Lie algebra $L$ and the action is given by the connection $\nabla$. Since $N$ is acted by both $R$ and $L$ of the Lie-Rinehart pair $(R,L)$ in this case, $N$ is termed as a module over the Lie-Rinehart algebra $(R,L)$.  
\end{remark}

\begin{definition}[Lie-Rinehart module] A \(R\)-module \(N\) is a module over the Lie-Rinehart pair \((R,L)\) or simply called an \((R,L)\)-module), with a connection \(\nabla\)  if
\begin{itemize}
\item \(N\) is a module over the Lie algebra \(L\) where the action is given by the connection \(\nabla\).
\[
\nabla_{[X_1,X_2]}Y = (\nabla X_1 \circ \nabla X_2 -  \nabla X_2 \circ \nabla X_1) Y
\] 
for all \(X_1, X_2 \in L\) and \(Y \in N\).

\item The action of the commutaive algebra \(R\) on \(N\) and the action on Lie algebra \(L\) on \(N\) satisfy the Lie-Rinehart pair compatibility.  For all \(X_1, X_2 \in L\), \(f \in R\) and \(Y \in N\):
\[
\nabla_{[X_1,fX_2]}Y = (X_1.f)\nabla_{X_2}Y + f \nabla_{[X_1,X_2]} Y
\]
\end{itemize}
\end{definition}

\begin{ex}
	For a Lie-Rinehart pair $(R,L)$, a primary example of Lie-Rinehart module over $(R,L)$ is the module $R$. Looking at $R$ as $R$-module, define the connection 
	\begin{align*}
		\nabla : L &\longrightarrow \End_k(R) \\
		X &\longmapsto \nabla_X,
	\end{align*}
	where $\nabla_X(f) = X.f $ for all $X \in L$ and $f \in R$. The flatness of the connection $\nabla$ follows from the definition of $(R,L)$ being a Lie-Rinehart pair. Hence, $R$ is a Lie-Rinehart module over $(R,L)$.
\end{ex}

\begin{definition}[Morphism of Lie-Rinehart modules] Let \(M,N\) be \((R,L)\)-modules with connection \(\nabla\), \(\nabla'\) respectively. A \(R\)-module morphism \(\alpha : M \longrightarrow N\) is a morphism of \((R,L)\)-modules if \(\alpha\) is a morphism of \(L\)-modules viz. for all \(X \in L\) and \(Y \in M\)
	\[
	 \alpha(\nabla_X Y) = \nabla'_X (\alpha(Y)).
	\]	
\end{definition}
\medskip

For a connection $\nabla$ defined on the $R$-module $L$ with respect to Lie-Rinehart pair $(R,L)$, the torsion $T$ of the connection $\nabla$ is defined as
\begin{align*}
	T : L\wedge L &\longrightarrow L \\
	(X,Y) &\longmapsto T(X,Y),
\end{align*}
where  $T(X,Y) =  \nabla_XY - \nabla_YX -\left[X,Y\right]$.

Let $N$ be a $R$-module endowed with a connection $\nabla$ with respect to the Lie-Rinehart pair $(R,L)$. The $R$-module $\End_R(N)$ is canonically equipped with the connection $\hat{\nabla}$(with respect to $(R,L)$), defined as 
\begin{align}\label{eq:conn-on-endo}
	(\hat{\nabla}_X \nu)(Y) = \nabla_X(\nu(Y)) - \nu(\nabla_XY),  
\end{align} 
for all $X \in L, Y \in N$ and $\nu \in \End_R(N)$.
From $(\ref{eq:conn-on-endo})$, it is straightforward to verify that for all $X \in L$ the connection $\hat{\nabla}_X$ acts a derivation on the algebra $\End_R(N)$ viz., for all $\nu, \mu \in \End_R(N)$:
\begin{align}\label{eq: Leibnitz-on-End}
	\hat{\nabla}_X(\nu \circ \mu) &= \hat{\nabla}_X(\nu) \circ \mu + \nu \circ \hat{\nabla}_X(\mu).
\end{align}

\begin{proposition}\cite{Floystad20tup}
If $N$ is a Lie-Rinehart module with respect to the Lie-Rinehart pair $(R,L)$, then $\End_R(N)$ is also a Lie-Rinehart module over $(R,L)$.
\end{proposition}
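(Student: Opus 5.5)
We must show that $\End_R(N)$, equipped with the connection $\hat{\nabla}$ of \eqref{eq:conn-on-endo}, satisfies the two axioms of a Lie-Rinehart module over $(R,L)$. The plan is to check, in order, four things: that $\hat{\nabla}_X\nu$ again lies in $\End_R(N)$; that $\hat{\nabla}$ is a connection in the sense of Definition~\ref{de:conn}; that $\hat{\nabla}$ is flat; and that the Lie-Rinehart compatibility holds. Throughout we use only the flatness and the Leibniz property of $\nabla$ on $N$.

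\emph{Well-definedness and connection property.} For $f\in R$, $Y\in N$ and $\nu\in\End_R(N)$, we expand $\nabla_X(\nu(fY))=\nabla_X(f\nu(Y))=(X.f)\nu(Y)+f\nabla_X\nu(Y)$. We also expand $\nu(\nabla_X(fY))=(X.f)\nu(Y)+f\nu(\nabla_XY)$. The $(X.f)$ terms cancel, so $\hat{\nabla}_X\nu$ is $R$-linear. Next, $R$-linearity of $X\mapsto\hat{\nabla}_X$ is immediate from $R$-linearity of $\nabla$. The Leibniz rule $\hat{\nabla}_X(f\nu)=(X.f)\nu+f\hat{\nabla}_X\nu$ follows by the same one-line expansion.

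\emph{Flatness.} This is the main computation. We evaluate $\llbracket\hat{\nabla}_{X_1},\hat{\nabla}_{X_2}\rrbracket\nu$ on $Y$ and expand each double application. Writing $\nabla_i=\nabla_{X_i}$, the term $\hat{\nabla}_{X_1}\hat{\nabla}_{X_2}\nu$ applied to $Y$ equals
\[
\nabla_1\nabla_2\nu(Y)-\nabla_1\nu(\nabla_2Y)-\nabla_2\nu(\nabla_1Y)+\nu(\nabla_2\nabla_1Y).
\]
The two middle terms are symmetric in $1\leftrightarrow 2$, so they cancel in the commutator. What remains is
\[
\llbracket\nabla_1,\nabla_2\rrbracket(\nu(Y))-\nu(\llbracket\nabla_1,\nabla_2\rrbracket Y).
\]
By flatness of $\nabla$ on $N$, this equals $\nabla_{[X_1,X_2]}\nu(Y)-\nu(\nabla_{[X_1,X_2]}Y)$, which is $(\hat{\nabla}_{[X_1,X_2]}\nu)(Y)$. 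The only care needed here is the bookkeeping of the cross terms.

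\emph{Compatibility.} For $\hat{\nabla}_{[X_1,fX_2]}\nu$ we use the Leibniz rule of Definition~\ref{de:LR}(iii) to write $[X_1,fX_2]=(X_1.f)X_2+f[X_1,X_2]$. Then $R$-linearity of $X\mapsto\hat{\nabla}_X$, already established, gives $(X_1.f)\hat{\nabla}_{X_2}\nu+f\hat{\nabla}_{[X_1,X_2]}\nu$, as required.

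Together these steps show that $\End_R(N)$ is a Lie-Rinehart module over $(R,L)$. No step presents a genuine obstacle.
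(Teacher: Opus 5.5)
Your proof is correct and complete. The paper gives no proof of its own and only cites \cite{Floystad20tup}; your direct check of closure in $\End_R(N)$, the connection axioms, flatness and compatibility is the standard argument.

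Two small remarks:
\begin{itemize}
\item The $R$-linearity of $X\mapsto\hat{\nabla}_X$ uses the $R$-linearity of $\nu$ as well as that of $\nabla$, since you need $\nu(g\nabla_XY)=g\,\nu(\nabla_XY)$.
\item Your flatness computation is exactly the Jacobi identity in $(\End_k(N),\llbracket\cdot,\cdot\rrbracket)$, because $\hat{\nabla}_X\nu=\llbracket\nabla_X,\nu\rrbracket$.
\end{itemize}
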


For completeness, we recall the Chevalley—Eilenberg complex associated to Lie—Rinehart algebras in Appendix~A.

\subsection{Tracial Lie-Rinehart Algebras}
We now introduce algebraic analogues of covariant differentiation
and divergence. The map $d$ plays the role of taking the covariant derivative
of a section, while the trace map $\tau$ will encode the contraction
leading to divergence.

Given a connection $\nabla$ on $L$, the assignment
$X \mapsto dX$ associates to each element $X \in L$
an $R$-linear endomorphism of $L$, analogous to the
covariant derivative viewed as a $(1,1)$-tensor.
 
 We proceed in three steps: first defining an algebraic covariant
derivative, then introducing the algebra of elementary endomorphisms,
and finally defining trace and divergence.

\begin{definition}\label{de:d}
Let $L$ be the $R$-module endowed with connection $\nabla$ over the Lie-Rinehart algebra $(R,L)$ and then define $k$-linear maps 
\begin{align*}
	d : L &\longrightarrow \End_{R}(L) \\
	X &\longmapsto dX,
\end{align*} 
where $dX(Z) = \nabla_ZX$ for all $Z,X \in L$ and 
\begin{align*}
\delta : L &\longrightarrow \End_R(L) \\
X &\longmapsto \delta X,
\end{align*}
where $\delta X(Y) = T(Y,X)$ for all $X,Y \in L$.
\end{definition}

The $k$-linear map $\delta$ satisfies the following properties
\begin{enumerate}
  \item For all $X,Y \in L$ : $\delta X(Y) +\delta Y(X) = 0$,
  \item If \(T\) is parallel, then for all $X,Y_1,Y_2$ 
  \begin{align*}
  d(\delta Y_1(Y_2))(X) = \delta(dY_1(X))(Y_2) -\delta(dY_2(X))(Y_1).
  \end{align*}
\end{enumerate}
	
Define $\El_R(L)$ as the $R$-algebra of {\em elementary $R$-module endomorphisms} generated by the set of $R$-linear maps 
\begin{align*}
\big\lbrace\hat{\nabla}_{X_1}\hat{\nabla}_{X_2}\cdots\hat{\nabla}_{X_k}dY, \hat{\nabla}_{X_1}\hat{\nabla}_{X_2}\cdots\hat{\nabla}_{X_k}\delta Y \,:\; k \geq 0 \, , \, X_1,X_2,\ldots,X_k,Y \in L \big\rbrace.
\end{align*}
By definition, $\El_R(L)$ is a subalgebra of the algebra of $\End_R(L)$. If $L$ is a Lie-Rinehart module, $\El_R(L)$ is a Lie-Rinehart submodule of the Lie-Rinehart module $\End_R(L)$. This algebra collects operators generated by iterated covariant
derivatives and torsion. To define divergence, we require a notion of trace on such endomorphisms.  $\El_R(L)$ serves as the domain for the trace. Note that it is not possible to define a trace on the full group of endomorphisms.

\begin{definition}\label{de:tracial-LR} A Lie-Rinehart algebra $(R,L)$ is {\em tracial} if $L$ is a Lie-Rinehart module over $(R,L)$ with connection $\nabla$ and a $R$-linear map $\tau: \El_R(L) \longrightarrow R$ such that
	\begin{enumerate}[label=(\roman*)]
		\item $\tau$ is a homomorphism of $(R,L)$-modules from $(\El_R, \hat{\nabla})$  to $(R, \nabla)$, that is, for \(nu \in \El_R\) and \(X \in L\):
		\[
		 \tau(\hat\nabla_X \nu) = \nabla_X (\tau(\nu)) = X. (\tau(\nu))
		\]
		
		\item for all $\nu, \mu \in \El_R(L)$
		\begin{equation*}
			\tau(\nu \circ \mu) = \tau(\mu \circ \nu),
		\end{equation*}
viz., $\tau$ is invariant under the action of cyclic permutations.
	\end{enumerate}
\end{definition}

We now arrive at the central notion of this section.
\begin{definition}\label{de:div} The {\em divergence} map on $(L,\nabla)$ in a tracial Lie-Rinehart pair $(R,L,\tau)$ is
\begin{align}\label{eq:div}
\Div : L &\longrightarrow R \nonumber \\
X &\longmapsto \Div(X) = (\tau \circ d)(X).
\end{align}
Therefore,  we have $\Div = \tau \circ d$.
\end{definition}

This notion of divergence will be realised explicitly in terms of planar
aromatic trees in Section~5.

\begin{remark}[Geometric interpretation of the trace and divergence]
In the classical geometric setting where $R = C^\infty(M)$ and $L = \mathfrak{X}(M)$ for a smooth manifold $M$, a nowhere-vanishing volume form $\mu$ induces a trace functional $\tau$ on suitable endomorphisms of $L$. In this case, the divergence defined by
\[
\operatorname{div}(X) = \tau(dX)
\]
coincides with the standard divergence associated to $\mu$, characterized by
\[
d(\iota_X \mu) = (\operatorname{div} X)\,\mu.
\]
Thus, tracial Lie--Rinehart algebras can be viewed as an algebraic abstraction of divergence relative to a volume form.

From this perspective, the trace map $\tau$ captures the infinitesimal
change of volume induced by the connection $\nabla$, i.e. its trace
(determinant) component. Thus, divergence depends on both $\nabla$ and
$\tau$, corresponding geometrically to the choice of a connection together
with a volume form.
\end{remark}

\section{Post-Lie-Rinehart algebras}
\label{section:pLR_algebra}

Let $V$ be a $k$-vector space and let $\triangleright : V^{\otimes 2} \longrightarrow V$ be a magmatic product defined on $V$. Then, the {\em Associator} map is defined as
\begin{align*}
	\Ass_{\triangleright} : V^{\otimes 3} &\longrightarrow V \\
	(a,b,c) &\longmapsto a \triangleright (b \triangleright c) - (a \triangleright b) \triangleright c.
\end{align*}
Post-Lie-Rinehart algebras were introduced  in \cite{Guo21iag, Jacques23pla} and further studied in \cite{Busnot25pha}, where their link with post-Hopf algebroids is discussed.

\begin{definition}\label{de:post-LR} A post-Lie-Rinehart algebra is a Lie-Rinehart algebra $(R,L)$ endowed with flat connection $\nabla : L \longrightarrow  \End_k(L)$ with parallel torsion or equivalently 
\begin{align*}
\nabla: L \otimes L &\longrightarrow L \\
(X,Y) &\longmapsto X \triangleright Y := \nabla_X Y.
\end{align*}
\end{definition}
Hence for all $X,Y,Z \in L$:
\begin{enumerate}[label=(\roman*)]\label{enum:post-Lie}
\item $ \Ass_{\triangleright}(X,Y,Z) - \Ass_{\triangleright}(Y,X,Z) = T(Y,X) \triangleright Z$ \quad \text{($\mathcal{R}=0 \Leftrightarrow L$ is a Lie-Rinehart module)},

\item $X \triangleright T(Y,Z) = T((X \triangleright Y),Z) + T(Y, (X \triangleright Z))$ \quad \text{(torsion $T$ is parallel)}.
\end{enumerate}

\begin{ex}[The covariant derivation algebra]
This example illustrates how post-Lie-Rinehart structures arise naturally
from connections on manifolds and govern the algebra of flows and geodesics.
Let $(M,\nabla)$ be a manifold with a general $\TM$-connection $\nabla$ (not necessarily  flat nor constant torsion). Associated with this geometric space is a natural post-Lie-Rinehart algebra which is governing the algebra of flows of vector fields and geodesics on $M$. We give a brief introduction and refer to~\cite{AlKaabi22aao} for details. 
Define the tensor algebra of vector fields 
\[
T(\XM) := \R \oplus \XM \oplus (\XM\ppr\XM)\oplus (\XM\ppr\XM\ppr\XM)\oplus\cdots = \bigoplus_{r=0}^\infty T_0^r,
\]
where ${\XM\ppr\XM := \XM\otimes_R \XM}$. We follow~\cite{Oudom08otl,Ebrahimi15otl} and extend the connection to an $\R$-linear product $\tr$ on $T(\XM)$, as the Guin-Odoum construction:
for $\varphi\in \R$, $X, Y\in \XM$ and $W_1,W_2\in T(\XM)$ let
\begin{align*}
X\tr Y &= \nabla_X Y,\\
\varphi\tr W_1 &= \varphi W_1,\\
X\tr (W_1\ppr W_2) &= (X\tr W_1)\ppr W_2 + W_1\ppr (X\tr W_2),\\
(X\ppr W_1)\tr W_2 &= X\tr(W_1\tr W_2)-(X\tr W_1)\tr W_2. 
\end{align*}
The product $\tr$ is $\R$ multi-linear in left argument and $\RR$ linear in the right. \\
Let 
$\Delta \colon T(\XM)\rightarrow T(\XM)\otimes T(\XM)$ denote the de-shuffle co-product and define the associative Grossman—Larson product
$*\colon T(\XM)\otimes T(\XM)\rightarrow T(\XM)$ as
\[A*B = \sum_{\Delta(A)}A_{(1)}\ppr \left(A_{(2)}\tr B\right).\]
Then $W_1\tr\left(W_2\tr W_3\right) = \left(W_1 * W_2\right)\tr W_3$. 

We call $\D(\M) := (T(\XM), \ppr\ , \tr, \Delta, *)$ the \emph{covariant derivation algebra} on $(\M,\nabla)$. It acts naturally as higher order covariant derivations on the tensor bundles $T^r_s$ over $M$. The primitive elements 
\[\g = \{W\in \D(\M)\, \colon\, \Delta(W) = W\otimes I + I\otimes W\}\]
acts as first order covariant derivations, in particular $\g$ acts on $R=T^0_0$ as derivations, which defines
an anchor map 
\[\rho\colon \g\rightarrow \Der(R,R)=\XM.\]
The lifted connection $\tr$ on $\g$ is flat with parallel torsion, hence $(\g,[-,-],\tr)$ is a post-Lie algebra with 
$[A,B]= A\ppr B-B\ppr A$, and $(\g,\llbracket-,-\rrbracket)$ is a Lie algebra with 
\[\llbracket A,B\rrbracket = A*B-B*A = [A,B] + A\tr B-B\tr A.\]
The tuple $(R,L\!=\!\g,\rho,\tr)$ is a tracial post-Lie-Rinehart algebra with enveloping algebra $\D(\M)$. 
The exponential with respect to $\ppr$ yields pullback series along geodesics, while the exponential with respect to $*$ yields pullback series along the exact flow of a vector field. 

It might be a surprise that a general $\TM$-connection $\nabla$ with torsion $T^\nabla$ and curvature $\RR^\nabla$ lifts to a flat post-Lie connection $\tr$ on $\g$. The curvature  $\RR^\nabla$  and torsion $T^\nabla$ appear through a split exact sequence of Lie algebras. Let  $[-,-]_J$ be the Jacobi bracket on vector fields $\XM$ and for $X,Y\in \XM$
define the curvature form
\[s(X,Y) := \llbracket X,Y\rrbracket - [X,Y]_J, \]
thus $s$ is a tensor of mixed type $s\in T^2_2\oplus T^1_2$. 
Then $\RR^\nabla(X,Y)Z = s(X,Y)\tr Z$.

 Let ${{\mathcal I} = \langle s(X,Y) \colon X,Y\in \XM\rangle}$ be the two-sided ideal of the Grossman—Larson Lie algebra $(\g,\llbracket-,-\rrbracket)$ generated by the curvature forms. Then there is a split short exact sequence
\[
\begin{tikzcd}
0 \arrow[r]
& {\mathcal I} \arrow[r]
& \mathfrak{g} \arrow[r, "\rho"]
& {\XM} \arrow[r] \arrow[l, bend left=50,  "i"]
& 0.
\end{tikzcd}
\]
Thus the G-L Lie algebra $(\g,\llbracket-,-\rrbracket)$ decomposes in a semi direct product
\[\g \cong {\mathcal I}\rtimes \XM, \]
where the left part contains the curvature forms and the right part the torsion forms of $\nabla$. 
As an example, we have for $X,Y\in \XM$ that  $\rho([X,Y]) = -T^\nabla(X,Y)$ and $(1-\rho)([X,Y]) = s(X,Y)$. 
We refer to~\cite{AlKaabi22aao} and forthcoming papers for more details. 
\end{ex}

\begin{ex} Let $M^n$ be an $n$-dimensional real manifold and $U$ be an open subset of $M^n$. Let $R = \mathcal{C}^{\infty}(U)$ and $L = \mathbf{\Gamma}(TM_{|U})$ be the space of local sections of the tangent bundle restricted to $U$ with nonholonomic basis $\left(e_1,e_2,\ldots,e_n \right)$ equipped with a flat connection with parallel torsion 
\begin{align*}
\nabla : L^{\otimes 2} &\longrightarrow L \\
(X,Y) &\longmapsto X \triangleright Y,
\end{align*}
such that for all $i,j,k = 1,2,\ldots,n$,
\begin{align*}
\left[e_i,e_j\right] = \sum_{k=1}^nc_{ij}^ke_k,  
\end{align*}
where $c_{ij}^k$ are {\bf structure constants} and 
\begin{align*}
e_i \triangleright e_j = \nabla_{e_i}(e_j) = \sum_{k=1}^n\Gamma_{ij}^k e_k,
\end{align*}
where $\Gamma_{ij}^k$ are {\bf connection coefficients}. The torsion tensor is hence computed as
\begin{align*}
T(e_i,e_j) = \sum_{k=1}^n (\Gamma_{ij}^k - \Gamma_{ji}^k - c_{ij}^k)e_k.
\end{align*}
Thus $\left(\mathcal{C}^{\infty}(U), \mathbf{\Gamma}(TU), \nabla\right)$ is a post-Lie-Rinehart algebra and
for all $X,Y \in L$ with $X = \sum_{i=1}^n X^i e_i$ and $Y = \sum_{j=1}^n Y^j e_j$ where $X^i, Y^j \in R$: 
\begin{align*}
X \triangleright Y = \sum_{j=1}^n \left(\sum_{i=1}X^i(e_i.Y^j)\right)e_j + \sum_{k=1}^n \left(\sum_{i=1}^n \sum_{j=1}^n X^iY^j \Gamma_{ij}^k\right) e_k.
\end{align*} 
Thus for a vector field $Y = \sum_{j=1}^n Y_j e_j$, the corresponding endomorphism $dY$ maps 
\begin{align*}
e_i \longmapsto \sum_{j=1}^n \left(e_i.Y^j\right)e_j + \sum_{k=1}^n \left(\sum_{j=1}^n Y^j \Gamma_{ij}^k\right) e_k.
\end{align*}
Therefore the divergence of $Y$ is given by
\begin{align*}
\Div (Y) = \tau(dY) = \sum_{i=1}^n \left(e_i.Y^i\right) + \sum_{i=1}^n \left(\sum_{j=1}^n \Gamma_{ij}^i Y^j\right).
\end{align*}
\end{ex}

\begin{proposition}\label{prop:gen-El} In a post-Lie-Rinehart algebra $(R,L)$, the algebra of elementary $R$-module endomorphisms, $\El_R(L)$ is generated by $\{dX, \delta X:\, X \in L\}$.	
\end{proposition}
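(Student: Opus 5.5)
The plan is to show that every generator $\hat{\nabla}_{X_1}\cdots\hat{\nabla}_{X_k}dY$ and $\hat{\nabla}_{X_1}\cdots\hat{\nabla}_{X_k}\delta Y$ of $\El_R(L)$ lies in the $R$-subalgebra $\mathcal{A}$ generated by $\{dX,\delta X : X\in L\}$. The reverse inclusion is trivial, since these are the generators with $k=0$. Because $\hat{\nabla}_X$ is a derivation of $\End_R(L)$ by \eqref{eq: Leibnitz-on-End}, and satisfies $\hat{\nabla}_X(f\nu)=(X.f)\nu+f\hat{\nabla}_X\nu$, the subalgebra $\mathcal{A}$ is stable under every $\hat{\nabla}_X$ as soon as each $\hat{\nabla}_X dY$ and each $\hat{\nabla}_X\delta Y$ lies in $\mathcal{A}$. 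An induction on $k$ then finishes the proof. Everything therefore reduces to two closed formulas, for $\hat{\nabla}_X dY$ and for $\hat{\nabla}_X \delta Y$.

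For $\delta$, I use parallel torsion, property (ii) after Definition~\ref{de:post-LR}. Evaluating on $Z$:
\[
(\hat{\nabla}_X\delta Y)(Z)=X\tr T(Z,Y)-T(X\tr Z,Y)=T(Z,X\tr Y)=\delta(X\tr Y)(Z),
\]
so $\hat{\nabla}_X\delta Y=\delta(X\tr Y)$, which lies in $\mathcal{A}$.

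For $d$, I use flatness, property (i). By definition,
\[
(\hat{\nabla}_X dY)(Z)=X\tr(Z\tr Y)-(X\tr Z)\tr Y=\Ass_\tr(X,Z,Y).
\]
Property (i) gives $\Ass_\tr(X,Z,Y)=\Ass_\tr(Z,X,Y)+T(Z,X)\tr Y$, and the right-hand side expands as
\[
Z\tr(X\tr Y)-(Z\tr X)\tr Y+T(Z,X)\tr Y = d(X\tr Y)(Z)-dY(dX(Z))+dY(\delta X(Z)).
\]
Hence
\[
\hat{\nabla}_X dY = d(X\tr Y) - dY\circ dX + dY\circ\delta X,
\]
which again lies in $\mathcal{A}$.

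The main obstacle I expect is this second identity. One has to choose the correct orientation of the associator so that flatness converts $\hat{\nabla}_X dY$, which is a second covariant derivative of $Y$ differentiated in a "wrong" slot, into compositions of first-order operators. The signs in $T(Z,X)=\delta X(Z)$ also need care. I also need to check that the formulas hold as $R$-linear maps, which is automatic since both sides are $R$-linear in $Z$. With these two identities in hand, the induction on $k$ is routine.
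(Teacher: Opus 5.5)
Your proof is correct and follows the paper's own argument. You derive the same two identities, $\hat{\nabla}_X dY = d(X\triangleright Y) - dY\circ(dX-\delta X)$ from flatness and $\hat{\nabla}_X\delta Y=\delta(X\triangleright Y)$ from parallel torsion, and then induct using that $\hat{\nabla}_X$ is a derivation of $\End_R(L)$. Your induction step, showing that the $R$-subalgebra generated by the $dX,\delta X$ is stable under each $\hat{\nabla}_X$, is if anything cleaner than the paper's. The paper's explicit $n=2$ expansion omits the term $dX\circ\hat{\nabla}_{Y_2}(dY-\delta Y)$, although this does not affect its conclusion.
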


\begin{proof} The proof proceeds by induction on the length $n$ of arbitrary generators of $\El_R(L)$ of the form $\hat{\nabla}_{Y_n}\hat{\nabla}_{Y_{n-1}}\cdots \hat{\nabla}_{Y_1}dX$ and $\hat{\nabla}_{Y_n}\hat{\nabla}_{Y_{n-1}}\cdots \hat{\nabla}_{Y_1}\delta X$ for all $X,Y_1,Y_2,\ldots,Y_n \in L$.

\underline{(Base Case)$n=1$}: For $X,Y,Z \in L$
\begin{enumerate}[label=(\roman*)]
\item \begin{align*}
(\hat{\nabla}_{Y}dX)(Z) &= \nabla_{Y}(dX(Z)) - dX(\nabla_YZ)\\
&= Y \triangleright(Z \triangleright X) - (Y \triangleright Z) \triangleright X \\
&= Z \triangleright (Y \triangleright X) - (Z \triangleright Y) \triangleright X + T(Z,Y) \triangleright X\\
&= \Big\lbrace d(Y \triangleright X) - dX \circ dY + dX \circ \delta Y\Big\rbrace(Z) \\
&= \Big\lbrace d(Y \triangleright X) - dX \circ (dY - \delta Y)\Big\rbrace(Z).
\end{align*}

\item \begin{align*}
(\hat{\nabla}_Y \delta X)(Z) &= \nabla_{Y}(\delta X(Z)) - \delta X(\nabla_{Y}(Z)) \\
&= \nabla_{Y} (T(Z,X)) - T(\nabla_Y Z, X) \\
&= Y \triangleright T(Z,X) - T( Y \triangleright Z, X) \\
&= T(Z, Y\triangleright X) = \delta(dX(Y)) (Z).
\end{align*}
\end{enumerate}

To check for $\underline{n = 2}$, 
\begin{align*}
\hat{\nabla}_{Y_2}\hat{\nabla}_{Y_1}dX &= \hat{\nabla}_{Y_2}(d(Y \triangleright X) - dX \circ (dY - \delta Y) \quad (\text{from Case}\, n=1) \\
&\overset{(\ref{eq: Leibnitz-on-End})}{=}\hat{\nabla}_{Y_2}d(Y\triangleright X) - \hat{\nabla}_{Y_2}dX \circ (dY - \delta Y), 
\end{align*}
where all the terms are generated by $\{dX,\delta X : X\in L \}$ in view of the case $n=1$. Successively, the cases for $n=2,3,\ldots $ can be proven in the same way. For,  
\begin{align*}
\hat{\nabla}_{Y_2}\hat{\nabla}_{Y_1}\delta X &= \hat{\nabla}_{Y_2}(\delta(dX(Y_1)),
\end{align*}
and is reduced to Case $n=1$. Likewise, the cases for $n=2,3,\ldots$ can iteratively be reduced to the Case $n=1$. 
\end{proof}

\section{Planar aromatic trees and the free tracial post-Lie-Rinehart algebra}
\label{sec:planar_AT}

This section reviews planar rooted trees and introduces planar aromatic trees, which provide an explicit description of the free tracial post-Lie-Rinehart algebra.

\subsection{Planar rooted trees and free post-Lie-Rinehart algebra}

The planar rooted trees are a convenient graphical representation of the free magma \cite{MuntheKaas13opl}.
\begin{definition}
\label{def:planar_trees}
A planar tree, gathered in the set $T$, is defined by induction:
\[
\forestA\in T,\quad (t_1\cdots t_p)\graft \forestB\in T,\quad t_i\in T,
\]
where $(t_1\cdots t_p)\graft$ is an abstract notation for the ordered list of sub-trees $t_1,\dots, t_p$.
The notation $\forestC$ stands for a vertex of the tree, that are arbitrarily labeled (with different labels). The set of vertices $V(t)$ of a tree $t=(t_1\cdots t_p)\graft \forestD$ contains the root $\forestE$ and the vertices of its subtrees $t_k$. The number of vertices of a tree, denoted $|t|$ is called the order of the tree.

Let $C$ be a finite set of decorations, also called colours in numerics.
A decorated planar tree is a tuple $(t,\phi)$ with a tree $t$ and a map $\phi : V \longrightarrow C$. The decorated trees are gathered in the set $T_C$ and the space $\TT_C=\Span(T_C)$. Similarly, the subset/subspace of decorated planar trees of order $N$ are denoted $T_C^N$ and $\TT_C^N$.
\end{definition}

The index $C$ is omitted for examples and numerical applications, where one is mainly interested in one colour $C=\{\forestF\}$.
Planar trees are drawn with the root at the bottom. The planar trees of order up to five in $T$ are the following:
\[
\forestG;
\forestH;
\forestI,\forestJ;
\forestK,\forestL,\forestM,\forestN,\forestO;
\forestP,\forestQ,\forestR,\forestS,\forestT,\forestU,\forestV,\forestW,\forestX,\forestY,\forestAB,\forestBB,\forestCB,\forestDB.
\]
We emphasize that the following trees are different as they are defined by induction as
\[
\forestEB=(\forestFB\cdot(\forestGB\graft\forestHB))\graft\forestIB\,, \quad \forestJB=((\forestKB\graft\forestLB)\cdot\forestMB)\graft\forestNB.
\]

The left grafting product $\graft\colon \TT_C\times \TT_C\rightarrow \TT_C$ is defined on trees and then extended by bilinearity as
\[t_2\graft t_1=\sum_{v\in V(t_1)} t_2\graft_v t_1,\]
where $t_2\graft_v t_1$ attaches the root of $t_2$ to the node $v$ of \(t_2\) on the left.
For instance, we find
\[
\forestOB\graft \forestPB=\forestQB+\forestRB+\forestSB+\forestTB.
\]
Let us consider the free Lie algebra spanned by decorated planar trees $(\Lie(\TT_C),[-,-])$. The grafting product is extended on $\Lie(\TT_C)$ by
\begin{align*}
t_3\graft [t_2,t_1]&=[t_3\graft t_2,t_1]+[t_2,t_3\graft t_1],\\
[t_3,t_2]\graft t_1&=\Ass_{\graft}(t_3,t_2,t_1)-\Ass_{\graft}(t_2,t_3,t_1).
\end{align*}

\begin{proposition}[\cite{MuntheKaas13opl}]
\label{prop:free_post_Lie}
The space $(\Lie(\TT_C),[-,-],\graft)$ is the free post-Lie algebra over the set $C$, that is, for every set map \(F : C \longrightarrow L\) where \((L, [-,-], \triangleright)\) is a post-Lie algebra, there exists a unique post-Lie (epi-)morphism \(\F: \Lie(\TT_C) \longrightarrow L\) extending the map \(F\).
\[\begin{tikzcd}
	C & {( L, [-,-] ,\triangleright)} \\
	{(\Lie(T_C), [-,-],\curvearrowright)}
	\arrow["F", from=1-1, to=1-2]
	\arrow["\iota"', hook, from=1-1, to=2-1]
	\arrow["{\F}"', dashed, two heads, from=2-1, to=1-2]
\end{tikzcd}\]
\end{proposition}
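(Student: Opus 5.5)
The plan follows the standard route for freeness results of this kind, as in the proofs in \cite{MuntheKaas13opl}. It has three parts: show that $(\Lie(\TT_C),[-,-],\graft)$ is a post-Lie algebra, show that the extension of a set map is unique, and show that it exists.

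\textbf{Step 1: the post-Lie axioms.} We must check that the two extension rules for $\graft$ on $\Lie(\TT_C)$ are well defined and satisfy the post-Lie axioms.
\begin{itemize}
\item The first rule is a derivation rule. It is compatible with antisymmetry and with the Jacobi identity, so $t\graft(-)$ extends uniquely to a derivation of the free Lie algebra.
\item The second rule defines $[t_3,t_2]\graft(-)$ for brackets in the left argument. We then check by induction on bracket length that this is consistent with the Lie relations. The key point is that $x\mapsto x\graft(-)$, composed with the commutator structure, gives a Lie algebra morphism from $(\Lie(\TT_C),\llbracket-,-\rrbracket)$ to the derivations, where $\llbracket x,y\rrbracket=[x,y]+x\graft y-y\graft x$.
\item The post-Lie identities are then exactly the two defining rules: the associator identity with the bracket, and $\graft$ acting as a derivation of $[-,-]$.
\end{itemize}

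\textbf{Step 2: uniqueness.} Every planar tree is obtained from single vertices by iterated left grafting, using the recursion
\[
(t_1\cdots t_p)\graft c \;=\; t_1\graft\big((t_2\cdots t_p)\graft c\big)\;-\;\text{(terms of fewer grafting operations)}.
\]
This follows from the Guin--Oudom type identity relating $\graft$ on forests to iterated grafting. Hence $\Lie(\TT_C)$ is generated as a post-Lie algebra by $C$. Any post-Lie morphism is therefore determined by its values on $C$, and the same generation statement gives surjectivity onto the post-Lie subalgebra generated by $F(C)$.

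\textbf{Step 3: existence.} Define $\mathcal F$ on trees recursively by
\[
\mathcal F\big((t_1\cdots t_p)\graft c\big) \;=\; \big(\mathcal F(t_1)\cdots\mathcal F(t_p)\big)\triangleright F(c).
\]
Here the left-hand product is computed in the universal enveloping algebra $U(L)$, using the Guin--Oudom extension of $\triangleright$ and the ordered (non-symmetrised) concatenation product. Extend $\mathcal F$ to brackets as a Lie morphism. One then verifies that $\mathcal F(t\graft s)=\mathcal F(t)\triangleright\mathcal F(s)$ by induction on $|s|$, using that $\mathcal F(t)\triangleright(-)$ is a derivation of the concatenation product on $U(L)$.

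\textbf{Main obstacle.} The hardest part is this existence step, specifically checking compatibility of $\mathcal F$ with brackets on the left: $\mathcal F([t_3,t_2]\graft t_1)=[\mathcal F t_3,\mathcal F t_2]\triangleright\mathcal F t_1$. The approach is to reduce this to the associator identity (i) in $L$, applied inside $U(L)$. An alternative, cleaner route would be to identify the universal enveloping algebra of $\Lie(\TT_C)$ with the tensor algebra on planar forests (the Munthe-Kaas--Wright post-Hopf algebra) and to realise $\mathcal F$ as the restriction of a Hopf morphism.
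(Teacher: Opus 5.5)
Your outline is correct, but it is organised differently from the argument behind the citation. The paper gives no proof of its own; it quotes \cite{MuntheKaas13opl}, where the identification is made through enveloping algebras. There, ordered forests $\mathbb T(\TT_C)$ with concatenation and forest grafting form the free D-algebra (post-Hopf algebra) over $C$. The enveloping algebra of any post-Lie algebra is a D-algebra, and $\Lie(\TT_C)$ is recovered as the primitive part. This is essentially the alternative in your last sentence.

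Both routes need the same input on the target side, namely the Guin--Oudom extension of $\triangleright$ to $U(L)$; they differ on the source side. With forests, the post-Lie axioms on $\Lie(\TT_C)$ come for free. Forest grafting is defined on a basis of a free associative algebra, and for primitives $[x,y]\graft z$ is forced by $(xy)\graft z=x\graft(y\graft z)-(x\graft y)\graft z$. So nothing has to be checked against the Lie relations, and one also gets at once the Hopf-level morphism $\mathbb T(\TT_C)\to U(L)$ behind LB-series. Your route stays with trees and is more explicit: your recursion for $\mathcal F$ is exactly the elementary differential map $\F^F$ used later in the paper. The price is that you must do Step~1 by hand.

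\textbf{The real difficulty is in Step~1, not Step~3.} Left-bracket compatibility in Step~3 is immediate. The elements $x_1$, $x_2$ and $x_1\graft x_2$ all have smaller bracket length than $[x_1,x_2]$. So by induction $\mathcal F$ maps $\Ass_{\graft}(x_1,x_2,y)$ to $\Ass_{\triangleright}(\mathcal F x_1,\mathcal F x_2,\mathcal F y)$, and the identity $[X,Y]\triangleright Z=\Ass_\triangleright(X,Y,Z)-\Ass_\triangleright(Y,X,Z)$ in $L$ concludes, with no detour through $U(L)$.

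The delicate point is well-definedness in Step~1, and the justification you offer there is circular. Saying that $x\mapsto x\graft(-)$ is a Lie morphism on $(\Lie(\TT_C),\llbracket-,-\rrbracket)$ presupposes that $\llbracket-,-\rrbracket$ satisfies Jacobi, which is a consequence of the axioms being established. The induction does close, though:
\begin{itemize}
\item Grade by bracket length and write $D_x=x\graft(-)$. Assume $D$ is defined in lower degrees by derivations satisfying $D_{[x,y]}=[D_x,D_y]-D_{x\graft y}+D_{y\graft x}$.
\item The degree-$n$ part of the free Lie algebra is the sum over $i+j=n$ of the tensor products of the degree-$i$ and degree-$j$ parts, modulo antisymmetry and Jacobi. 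Antisymmetry is obvious, so you only need $\sum_{\circlearrowleft}\big([D_x,D_{[y,z]}]-D_{x\graft[y,z]}+D_{[y,z]\graft x}\big)=0$.
\item Expand with the derivation property and the lower-degree recursion. The triple commutators cancel by Jacobi in $\End$, the terms $[D_a,D_{b\graft c}]$ cancel in pairs, and so do the terms $D_{(a\graft b)\graft c}$ and $D_{a\graft(b\graft c)}$.
\end{itemize}
That computation should appear in your proof.

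Finally, you are right that $\mathcal F$ is onto only the post-Lie subalgebra generated by $F(C)$; the ``(epi-)'' in the statement must be read that way.
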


\begin{remark}
Let us now consider the universal enveloping algebra $\mathcal U(\Lie(\TT_C))$, equipped with its product. It naturally is a cocommutative Hopf algebra with the unshuffle coproduct \(\Delta_{\shuffle}\) and is isomorphic to the Hopf algebra $(\mathbb T(\TT_C),\cdot,\Delta_\shuffle)$ by the Cartier-Quillen-Milnor-Moore theorem \cite{Milnor65ots}.
The grafting product is extended to $\mathbb T(\TT_C)$ using the (post-Lie) Guin-Oudum construction \cite{Oudom08otl, Ebrahimi15otl} and $(\mathbb T(\TT_C),\oast,\Delta_\shuffle)$, where \(\oast\) is the Grossman-Larson product. The enveloping algebra of a post-Lie algebra endows a cointeracting structure. The tuple \((\mathbb T (\TT_C), \cdot, \Delta_\shuffle)\) is a Hopf algebra in the category of \((\mathbb T (\TT_C), \oast, \Delta_{\shuffle})\)-comodules. This cointeracting structure is termed as a post-Hopf algebra \cite{Li23pha}.
Note that this construction is compatible with the notation for the inductive definition of planar trees in Definition \ref{def:planar_trees}.
The Hopf algebras associated to planar trees have been extensively studied in the numerical literature \cite{Iserles00lgm, MuntheKaas08oth, Lundervold11hao, Ebrahimi24aso}, where they are used for representing the Taylor expansion of Lie-group methods.
\end{remark}

For the Lie algebra \(\Lie(\TT_C)\), the left adjoint Lie morphism for \(t \in \Lie(\TT_C)\) is given by
\begin{align*}
\delta_{t} : \Lie(\TT_C) &\longrightarrow \End(\Lie(\TT_C)) \\
\eta &\longmapsto \left[t, \eta\right]
\end{align*}

Let $T_C^0$ be the set of decorated planar trees with one free edge. We denote $\hat{T}_C^0$ the subset of decorated planar trees with the free edge attached to the root. For instance, we find
\[
\forestUB\in T_{\{\forestVB,\forestWB\}}^0,\quad
\forestXB\in \hat{T}_{\{\forestYB,\forestAC\}}^0\subset T_{\{\forestBC,\forestCC\}}^0.
\]
The trees with a free edge can be seen as endomorphisms in $\End(\Lie(\TT_C))$ that act on trees by grafting on the free edge, 
\[
\forestDC\colon \forestEC \mapsto \forestFC
\]
The (endomorphism) algebra generated by the \(\Lie(\TT_C)\) (via the left adjoint \(\delta_{\cdot}\)) and  \(\mathcal T_C^{0}\) is defined by
\[
\forestGC\circ\forestHC=\forestIC,\quad
\forestJC\circ\delta_{\forestKC}=\forestLC-\forestMC.
\]
This gives us in particular
\[
\forestNC\colon [\forestOC,\forestPC] \mapsto \forestQC-\forestRC.
\]
This yields that $\Tfe=\Span(T_C^0)$ equipped with the composition $\circ$ is a subalgebra of $(\End(\TT_\CC),\circ)$.
Moreover, the subalgebra $(\Tfe,\circ)$ is generated by $(\hat{\TT}_C^0,\circ)$ (with $\hat{\TT}_C^0=\Span(\hat{T}_C^0)$) as there is a unique way to decompose an element of $T_C^0$ into a composition of elements of $\hat{T}_C^0$:
\[
\forestSC=\forestTC\circ \forestUC\circ \forestVC.
\]

Let the map $d\colon \TT_C\rightarrow \Tfe$ that sums over all possible ways to graft from the left a free edge to the input:
\begin{align}\label{eq:differential-d}
d\forestWC=\forestXC+\forestYC+\forestAD+\forestBD.
\end{align}

\begin{lemma}
\label{lemma:elem_end}
The algebra of elementary module morphisms \(\El_k(\Lie(\TT_C))\) for the post-Lie-Rinehart algebra \((k,\Lie(\TT_C), \left[-,-\right], \graft)\) is the algebra generated by \(\hat{\TT}_C^0\), the subalgebra of trees with a free edge at the root and the Lie subalgebra \(\delta(\Lie(\TT_C))\). 
\end{lemma}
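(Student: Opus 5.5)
By Proposition~\ref{prop:gen-El}, applied to the post-Lie-Rinehart algebra $(k,\Lie(\TT_C),[-,-],\graft)$ with $R=k$, the algebra $\El_k(\Lie(\TT_C))$ is generated by the operators $dX$ and $\delta X$ with $X\in\Lie(\TT_C)$. The lemma therefore reduces to comparing two subalgebras of $\End(\Lie(\TT_C))$. The first, $A$, is generated by $\{dX,\delta X\}$. The second, $B$, is generated by $\hat{\TT}_C^0$ and $\delta(\Lie(\TT_C))$. I plan to prove $A\subseteq B$ and $B\subseteq A$. In this free setting the torsion of $\graft$ satisfies $T(Y,X)=Y\graft X-X\graft Y-[Y,X]$, so $\delta X(Y)=T(Y,X)$ is a combination of $dX$, of right grafting by $X$ (an element of $\hat\TT_C^0$ applied to $Y$), and of the adjoint $\delta_X$. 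I will keep track of this relation, since the paper's $\delta$ and the adjoint $\delta_\cdot$ differ exactly by grafting terms.

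\textbf{Step 1: $A\subseteq B$.} Take first a tree $t$. The operator $dt$ grafts a free edge on each vertex of $t$, so $dt$ is a sum of elements of $T_C^0$. Each such element factors uniquely as a composition of elements of $\hat T_C^0$, by the decomposition displayed just before the lemma. Hence $dt\in B$. For Lie brackets, the Leibniz rule for $d$ with respect to $[-,-]$ follows from the extension $t_3\graft[t_2,t_1]=[t_3\graft t_2,t_1]+[t_2,t_3\graft t_1]$. It gives $d[X,Y](Z)=[dX(Z),Y]+[X,dY(Z)]=(\delta_X\circ dY-\delta_Y\circ dX)(Z)$, and induction on bracket length then gives $dX\in B$ for every $X$. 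Finally, $\delta X$ is $dX$ minus right grafting by $X$ minus $\pm\delta_X$. Right grafting by a tree $t$, namely $Y\mapsto Y\graft t$, is a sum of trees with a free edge, so it lies in $B$ by the same factorisation argument. For brackets, right grafting by $[t_2,t_1]$ equals $\delta_{t_2}\circ(\cdot\graft t_1)$ plus a similar term, again by the Leibniz extension.

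\textbf{Step 2: $B\subseteq A$.} This is the direction I expect to be the main obstacle: one must isolate a single element of $\hat T_C^0$, that is, grafting at the root only, from the operators $dX$, which graft everywhere. I will induct on the order of the tree with a free edge at the root. Write $\hat e=(t_1\cdots t_p\,\times)\graft c$, where $\times$ marks the position of the free edge among the subtrees. The key observation is that $dt$ for $t=(t_1\cdots t_p)\graft c$ equals the sum over root positions of the free edge plus the sum over edges grafted into the subtrees $t_i$. The latter are compositions of root-free-edge trees of smaller order, hence in $A$ by induction. This isolates the sum over all insertion positions at the root, not a single position.

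To separate the positions, I will use right grafting, which lies in $A$ because it is $dX-\delta X\mp\delta_X$, together with compositions. Grafting $Y$ onto the root of $t$ in the leftmost position gives $Y\graft_{\mathrm{root}}t$. The other positions come from the Guin--Oudom-type recursion $(t_1\cdots t_p)\graft c=t_1\graft((t_2\cdots t_p)\graft c)-(t_1\graft(t_2\cdots t_p))\graft c$. With it I will express the operator that inserts $Y$ at the $j$-th root position through compositions of operators "graft a fixed tree on $Y$" and "graft $Y$ on a smaller tree", followed by triangular elimination over positions and orders. The base case is a single vertex, where $d(c)$ is exactly the element of $\hat T_C^0$ with an edge attached at $c$. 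Finally, $\delta(\Lie(\TT_C))\subseteq A$ follows from the same torsion relation read in the opposite direction.
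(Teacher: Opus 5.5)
You have a genuine gap, and it starts with the torsion formula, which uses the wrong bracket; Step~2 is built on it. In $\Lie(\TT_C)$ the connection $\graft$ is flat for $\llbracket X,Y\rrbracket=[X,Y]+X\graft Y-Y\graft X$, not for the free bracket. Condition (i) of Definition~\ref{de:post-LR}, together with $[t_3,t_2]\graft t_1=\Ass_{\graft}(t_3,t_2,t_1)-\Ass_{\graft}(t_2,t_3,t_1)$, forces $T(Y,X)=[X,Y]$. Hence $\delta X=\delta_X$ exactly, with no grafting correction; this is why the lemma can write $\delta(\Lie(\TT_C))$ for the adjoints.

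In your expression $T(Y,X)=Y\graft X-X\graft Y-[Y,X]$ the terms are also mislabelled. The term $Y\graft X$ is $dX(Y)$ itself. The other one, $Y\mapsto X\graft Y=\nabla_XY$, grafts the fixed $X$ onto every vertex of the argument; it is not right grafting.

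Step~1 survives, because in fact $\delta X=\delta_X$. Step~2 does not: it needs the operators ``graft a fixed tree on $Y$'', i.e.\ $\nabla_X$, to lie in $A$, and they do not. With the correct torsion, $dX-\delta X\mp\delta_X$ is $dX$ or $dX-2\delta_X$, never $\nabla_X$. In fact $\nabla_X\notin B\supseteq A$. Every element of $B$ inserts its argument unaltered into fixed contexts or brackets it with fixed elements, whereas $X\graft Y$ changes the interior of $Y$. So your Guin--Oudom elimination is not carried out inside $A$.

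There is a second misreading. Grafting is on the left, so $d\bigl((t_1\cdots t_p)\graft c\bigr)$ contains only the leftmost root position $(\times\,t_1\cdots t_p)\graft c$ (compare \eqref{eq:differential-d}). The task is therefore not to split a sum over positions, but to reach the non-leftmost positions at all.

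The missing ingredient is composition with adjoints. A bracket in a child slot expands as $[X,Z]=XZ-ZX$ in $\mathbb T(\TT_C)$, so
\[
\bigl((s_1\cdots s_{j-2}\,\times\,s_j\cdots s_p)\graft c\bigr)\circ\delta_{s_{j-1}}
=(s_1\cdots s_{j-1}\,\times\,s_j\cdots s_p)\graft c-(s_1\cdots s_{j-2}\,\times\,s_{j-1}\cdots s_p)\graft c .
\]
This is exactly the identity for a tree with a free edge composed with $\delta$ displayed just before the lemma. For the leftmost position,
\[
(\times\,t_1\cdots t_p)\graft c=d\bigl((t_1\cdots t_p)\graft c\bigr)-\sum_{i=1}^p\bigl((t_1\cdots t_{i-1}\,\times\,t_{i+1}\cdots t_p)\graft c\bigr)\circ dt_i .
\]
A double induction, first on the order and then on the position of the free edge, gives $\hat{\TT}_C^0\subseteq A$. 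In the first identity, the target equals a lower-order element composed with $\delta_{s_{j-1}}$, plus an element of the same order with the free edge one step further left.

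Alternatively, you may keep your recursion, provided $\nabla_X$ only enters through the commutators $[\nabla_X,\nu]=\hat\nabla_X\nu$. These lie in $\El_k(\Lie(\TT_C))$ by definition and are reduced to $d$'s and $\delta$'s by Proposition~\ref{prop:gen-El}.

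The paper's own proof is just the appeal to Proposition~\ref{prop:gen-El}. Your two-inclusion plan is the right way to make it precise, but the reverse inclusion needs the adjoint argument above rather than the one you propose.
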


\begin{proof}
This follows from (mimicking) Proposition~\ref{prop:gen-El}. 
\end{proof}
\medskip

The grafting operation \(\graft\) on \(\Lie(\TT_C)\) is extended as the action of the Lie algebra on the trees with a free edge
\[
\graft : \Lie(\TT_C) \times  \TT_C^0 \longrightarrow \TT_C^0
\]
as for all \(t_1, t_2 \in \Lie(\TT_C)\) and \(t \in \TT_C^0\) as
\begin{align}\label{eq:graft-planar-tree-tree-with free-edge}
	t_1 \graft a &= \sum_{v \in V(a)} t_1 \graft_v t, \nonumber \\
	[t_1,t_2] \graft t &= \Ass_{\graft}(t_1,t_2,t) - \Ass_{\graft}(t_2,t_1,t).
\end{align}
The grafting of a planar tree \(t_1\) on a planar tree with a free edge \(t\) is the sum of all possible grafting \(t_1\) on all the vertices of \(t\). The important remark is that {\em the free edge of \(a\) is not disturbed at all.}
For example, one finds
\begin{align*}
\forestCD \graft \forestDD = \forestED + \forestFD + \forestGD.
\end{align*}

\subsection{Planar aromatic trees and free tracial post-Lie-Rinehart algebras}

\begin{definition}
A planar aroma is a list $(t_1,\dots,t_n)_\circlearrowleft$ with $t_i\in \hat{T}_C^0$ that has cyclic invariance:
\[(t_1,\dots,t_n)_\circlearrowleft=(t_2,\dots,t_n,t_1)_\circlearrowleft=\dots=(t_n,t_1,\dots,t_{n-1})_\circlearrowleft.\]
We adopt the convention that the free edge in each $t_p$ is indicated with a cross $\forestHD$.
Let $\AA_C$ be the symmetric algebra generated by planar aromas, whose elements are called multi-aromas.
The $\AA_C$-module of aromatic trees is $\AA\TT_C=\AA_C\otimes \TT_\CC$ and the $\AA_C$-Lie algebra is $\Lie_{\AA_C}(\AA\TT_C)=\AA_C\otimes \Lie(\TT_C)$ equipped with the $\AA_C$-bilinear bracket $[-,-]$.
These vector spaces are graded by the number of vertices.
\end{definition}

An example of planar aroma is the following:
\[
(\forestID,\forestJD,\forestKD)_\circlearrowleft
=(\forestLD,\forestMD,\forestND)_\circlearrowleft
=(\forestOD,\forestPD,\forestQD)_\circlearrowleft.\]
In the Euclidean setting where there are no planar features, the location of the cross would not matter.
We added the notation $\forestRD$ for the sake of clarity as we shall now extend the map $d$ to aromas, which shall yield additional free edges.

The planar aromatic trees are the main object of interest for our numerical interests. For $a\in \AA_C$, $t\in \Lie(\TT_C)$, the associated aromatic tree \(a \otimes t\) is written as $at$ for simplicity.
The grafting product is extended by the Leibniz rule:
\begin{align*}
\graft&\colon \Lie_{\AA_C}(\AA\TT_C)\times \Lie_{\AA_C}(\AA\TT_C)\rightarrow\Lie_{\AA_C}(\AA\TT_C)\\
(a_1 t_1)\graft (a_2 t_2)&=a_1 a_2 (t_1\graft t_2)+a_1 (t_1\graft a_2) t_2,
\end{align*}
where the grafting of $t\in \Lie(\TT_C)$ on a multiaroma satisfies
\begin{align*}
t\graft\textbf{1}&=0,\\
t\graft(t_1,\dots, t_p)_\circlearrowleft&=(t\graft t_1,\dots, t_p)_\circlearrowleft+\dots +(t_1,\dots, t\graft t_p)_\circlearrowleft,\\
[t_1,t_2]\graft a&=\Ass_{\graft}(t_1,t_2,a)-\Ass_{\graft}(t_2,t_1,a),\\
t\graft(a_1a_2)&=(t\graft a_1)a_2+a_1 (t\graft a_2).
\end{align*}
The associated anchor map is
\[
\rho\colon \Lie_{\AA_C}(\AA\TT_C)\rightarrow \Der(\AA_C),\quad
\rho(x)(y)= x\graft y.
\]

The following proposition is an embodiment of the discussion so far in this subsection

\begin{proposition}
The space of planar aromatic forests $(\AA_C, \Lie_{\AA_C}(\AA\TT_C),[-,-], \rho,\graft)$ is a post-Lie-Rinehart algebra.
\end{proposition}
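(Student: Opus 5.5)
The plan is to check the axioms of Definition~\ref{de:post-LR} for $R=\AA_C$ and $L=\Lie_{\AA_C}(\AA\TT_C)=\AA_C\otimes\Lie(\TT_C)$ in turn, using Proposition~\ref{prop:free_post_Lie} as the base case. The aim is to reduce every identity to the post-Lie identities of $(\Lie(\TT_C),[-,-],\graft)$ together with derivation properties. The verification splits into three parts.

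\textbf{Step 1: $\AA_C$ is a module over $\Lie(\TT_C)$ by derivations.} Grafting a tree $t$ onto an aroma is defined by the Leibniz rule over the entries $t_i\in\hat{T}_C^0$ of the cyclic list, and extended to products of aromas again by Leibniz. So $\rho(t)$ is a derivation of $\AA_C$ once it is well defined. Well-definedness holds because the sum over the entries is invariant under cyclic shifts. It also uses the fact, from (\ref{eq:graft-planar-tree-tree-with free-edge}), that grafting on an element of $\hat{T}_C^0$ keeps the free edge at the root, so $t\graft t_i$ stays in $\hat{\TT}_C^0$.

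The bracket acts by $[t_1,t_2]\graft a=\Ass_\graft(t_1,t_2,a)-\Ass_\graft(t_2,t_1,a)$. One then checks that $\rho$ is a Lie morphism, that is, $\rho(\llbracket t_1,t_2\rrbracket)=[\rho(t_1),\rho(t_2)]$ for the Lie bracket $\llbracket x,y\rrbracket=[x,y]+x\graft y-y\graft x$ of the Lie–Rinehart structure. This is equivalent to the flatness identity~(i) applied to aromas:
\[
\Ass_\graft(t_1,t_2,a)-\Ass_\graft(t_2,t_1,a)=[t_1,t_2]\graft a ,
\]
which holds by the very definition of the action of brackets. It remains to check that this definition is consistent, namely that it is again a derivation and respects the Lie relations. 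For that I would note that it is the post-Lie action restricted to a single aroma entry. That entry is a tree with a free edge, on which the identities hold just as they do for trees in $\Lie(\TT_C)$, since the free edge is never touched.

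\textbf{Step 2: extend $\graft$ to $L$ and check the Leibniz rules.} With $(a_1t_1)\graft(a_2t_2)=a_1a_2(t_1\graft t_2)+a_1(t_1\graft a_2)t_2$, the product is $\AA_C$-linear in the left argument. In the right argument it satisfies $x\graft(fy)=(x\graft f)y+f(x\graft y)$, so it is a connection in the sense of Definition~\ref{de:conn}. The bracket $[-,-]$ is $\AA_C$-bilinear, so the Lie–Rinehart Leibniz rule $\llbracket x,fy\rrbracket=(x.f)y+f\llbracket x,y\rrbracket$ follows from the connection property alone.

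\textbf{Step 3: flatness and parallel torsion on $L$; the main obstacle.} The two identities (i) and (ii) of the post-Lie–Rinehart definition must be verified for general elements $a_it_i$. For (i) I would expand $\Ass_\graft(a_1t_1,a_2t_2,a_3t_3)$ by the Leibniz rule. The left factors $a_1,a_2$ come out by $\AA_C$-linearity, which leaves terms of three types:
\begin{enumerate}[label=(\alph*)]
\item pure tree terms, controlled by Proposition~\ref{prop:free_post_Lie};
\item terms where $t_1$ and $t_2$ both act on $a_3$, controlled by Step~1;
\item mixed terms, in which one of $t_1,t_2$ acts on $a_3$ and the other on $t_3$, and terms in which $t_1$ acts on the aroma $a_2$ in front of $t_2$.
\end{enumerate}
The mixed terms of type (c) are symmetric in $(1,2)$ and cancel in the antisymmetrisation. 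The terms where $t_1$ acts on $a_2$ are exactly the extra contribution to the torsion $T(a_2t_2,a_1t_1)$ coming from the anchor, so both sides of (i) match.

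For (ii), the torsion is $\AA_C$-bilinear: it is a tensor, and the anchor terms cancel inside it. Parallelism then reduces to the tree case plus the derivation property of $x\graft-$ on $\AA_C$. The bookkeeping of the anchor and mixed terms in (i) is where I expect the effort to lie; everything else is formal.
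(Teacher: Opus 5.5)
Your strategy is the right one: verify the axioms directly and reduce everything, through the Leibniz rules, to Proposition~\ref{prop:free_post_Lie} and to the derivation action on $\AA_C$. The paper relies on this implicitly and gives no argument beyond presenting the proposition as a summary of the preceding constructions. Step~2 and Step~3(ii) are correct as stated. However, Step~3(i), which you single out as the crux, contains a wrong claim. You say that the terms in which $t_1$ acts on $a_2$ are matched by an anchor contribution to $T(a_2t_2,a_1t_1)$, and no such contribution exists. With $\llbracket x,y\rrbracket=[x,y]+x\graft y-y\graft x$ as in your Step~1, we have $T(a_2t_2,a_1t_1)=[a_1t_1,a_2t_2]=a_1a_2[t_1,t_2]$, which is $\AA_C$-bilinear; you use exactly this in (ii). So the right-hand side of (i) is $a_1a_2\,[t_1,t_2]\graft(a_3t_3)$ and contains no $t_1\graft a_2$ terms, and matching terms the way you propose would leave those terms unaccounted for.

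What actually happens is that these terms cancel inside each associator. Both $x\graft((fy)\graft z)$ and $(x\graft(fy))\graft z$ contain the same term $(x\graft f)(y\graft z)$, so $\Ass_{\graft}(x,fy,z)=f\,\Ass_{\graft}(x,y,z)$, and $\Ass_{\graft}$ is $\AA_C$-bilinear in its first two slots. For $x=a_1t_1$, $y=a_2t_2$, $z=a_3t_3$ this gives
\begin{align*}
\Ass_{\graft}(x,y,z)={}&a_1a_2a_3\,\Ass_{\graft}(t_1,t_2,t_3)+a_1a_2\,\Ass_{\graft}(t_1,t_2,a_3)\,t_3\\
&+a_1a_2\big((t_1\graft a_3)(t_2\graft t_3)+(t_2\graft a_3)(t_1\graft t_3)\big).
\end{align*}
The last term is symmetric under $1\leftrightarrow 2$, so antisymmetrising gives $a_1a_2\big(a_3\,[t_1,t_2]\graft t_3+([t_1,t_2]\graft a_3)\,t_3\big)=[x,y]\graft z$. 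The same bilinearity yields the anchor condition $[x,y]\graft f=\Ass_{\graft}(x,y,f)-\Ass_{\graft}(y,x,f)$ for general $x,y\in\Lie_{\AA_C}(\AA\TT_C)$; your Step~1 only checks it for $x,y\in\Lie(\TT_C)$.

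A smaller point concerns Step~1. The remark ``the free edge is never touched'' does not by itself show that the recursive $\Ass_{\graft}$-formula for brackets respects the Jacobi identity of $\Lie(\TT_C)$. A clean justification is that $(\Lie(\TT_C),\llbracket-,-\rrbracket)$ is the free Lie algebra on $\TT_C$: in the Grossman--Larson algebra, $x_1*\cdots*x_n$ equals $x_1\cdots x_n$ plus shorter words. Hence any action of trees extends uniquely to a Lie-module action. Its image lies in $\Der(\AA_C)$, because it is generated as a Lie algebra by the images of trees, which are derivations. The paper does not address this point either.
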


Say the tree $t_1$ is with a free edge (other than the crossed free edge defining the aroma), then we have an aroma with a free edge.
\[
(t_1,\dots, t_p)_\circlearrowleft\colon t\in\Lie(\TT_C) \rightarrow (t_1(t),\dots, t_p)_\circlearrowleft.\]
Here \(t_1(t)\) denotes the grafting of \(t\) on the non-crossed free edge of \(t_1\). For instance, we find
\[
(\forestSD,\forestTD,\forestUD)_\circlearrowleft(\forestVD)=(\forestWD,\forestXD,\forestYD)_\circlearrowleft,
\]
where we recall that the free edges with a cross are the one defining the aroma and cannot be grafted upon. Thus hereafter unless otherwise mentioned, free edges for aromatic planar trees should imply that edges are non-crossed.
\smallskip

Let $\Lie_{\AA_C}^0(\AA\TT_C)$ be the Lie algebra of aromatic trees where exactly one aroma or one tree has a free edge.
The algebra $(\Lie_{\AA_C}^0(\AA\TT_C),\circ)$ forms a subalgebra of $(\End_{\AA_C}(\Lie_{\AA_C}(\AA\TT_C)),\circ)$ by imposing $\AA_C$-linearity.

The map $d$ extends on aromatic trees by the Leibniz rule:
\[
d\colon \Lie_{\AA_C}(\AA\TT_C)\rightarrow \Lie_{\AA_C}^0(\AA\TT_C) \subset \End_{\AA_C}(\Lie_{\AA_C}(\AA\TT_C)),\quad
d(at)=d(a)t+ad(t),
\]
where we extend the map \(d\) defined in (\ref{eq:differential-d}) on multiaromas and commutators by
\begin{align*}
d(\textbf{1})&=0,\\
d(a_1a_2)&=d(a_1)a_2 +a_1d(a_2),\\
d((t_1,\dots,t_n)_\circlearrowleft)&=(d(t_1),\dots,t_n)_\circlearrowleft+\dots+(t_1,\dots,d(t_n))_\circlearrowleft,\\
d([t_1,t_2])&=[d(t_1),t_2] +[t_1,d(t_2)].
\end{align*}
Here \(d(t_1)\)  is the sum of all possible graftings of a  free edge from the left onto all the vertices of the planar tree \(t_1\) and the {\em the crossed free edge is not disturbed at all.} For example, 
\begin{align*}
	d(\forestAE) = \forestBE
\end{align*}

\begin{lemma}
Extend $\delta$ to $\Lie_{\AA_C}(\AA\TT_C)$ by $\AA_C$-linearity. Then, the elementary endomorphisms of the post-Lie-Rinehart algebra of planar aromatic forests are given by
\[\El_{\AA_C}(\Lie_{\AA_C}(\AA\TT_C))= \Lie_{\AA_C}^0(\AA\TT_C)\oplus\delta(\Lie_{\AA_C}(\AA\TT_C)).\]
\end{lemma}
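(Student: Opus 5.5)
We prove the two inclusions separately. Throughout, the work is organised by Proposition~\ref{prop:gen-El}: in a post-Lie-Rinehart algebra, $\El_{\AA_C}(\Lie_{\AA_C}(\AA\TT_C))$ is the $\AA_C$-algebra generated by $\{dX,\delta X : X\in \Lie_{\AA_C}(\AA\TT_C)\}$. The first task is to recognise these generators in the graphical picture.

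For the inclusion ``$\subseteq$'', take $X=at$ with $a\in\AA_C$ and $t\in\Lie(\TT_C)$. By the Leibniz extension, $d(at)=d(a)t+a\,d(t)$. Here $a\,d(t)$ is $a$ times a tree with one free edge, and $d(a)t$ is an aroma carrying one non-crossed free edge multiplied by the tree $t$. Both therefore lie in $\Lie_{\AA_C}^0(\AA\TT_C)$. By $\AA_C$-linearity, $\delta(at)=a\,\delta_t$ lies in $\delta(\Lie_{\AA_C}(\AA\TT_C))$.

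It then remains to show that the right-hand side is closed under composition. We check the four types of product.
\begin{itemize}
\item \emph{Free edge after free edge.} Grafting the output of one free-edge element into the free edge of another keeps exactly one free edge, whether that edge sits in a tree or in an aroma. This is the aromatic analogue of the decomposition of $T_C^0$ into $\hat T_C^0$ shown after Lemma~\ref{lemma:elem_end}.
\item \emph{Free edge after $\delta$.} A composite $u\circ\delta_s$ with $u\in\Lie^0_{\AA_C}$ equals $u([s,\cdot])$. Since $u$ is built from grafting, the bracket is distributed as in the example $\forestNC$. The outcome is again a combination of elements with a single free edge, in which the bracket is formed inside the tree.
\item \emph{$\delta$ after free edge.} The composite $\delta_s\circ u=[s,u(\cdot)]$ is a bracket of $s$ with a tree that has a free edge, so it belongs to $\Lie^0_{\AA_C}(\AA\TT_C)$. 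This uses that $\Lie^0$ was defined as a Lie algebra, so its elements may contain brackets.
\item \emph{$\delta$ after $\delta$.} The composite $\delta_s\circ\delta_r=[s,[r,\cdot]]$ is likewise an element of $\Lie^0$.
\end{itemize}
Hence the right-hand side is an $\AA_C$-subalgebra containing all generators, which gives ``$\subseteq$''.

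For the inclusion ``$\supseteq$'', it suffices to show that every element of $\Lie^0_{\AA_C}(\AA\TT_C)$ is an $\AA_C$-combination of compositions of the generators $dX$ and $\delta X$; the summand $\delta(\Lie_{\AA_C}(\AA\TT_C))$ is then trivially included. Lemma~\ref{lemma:elem_end} already gives this when the free edge sits in a tree: $\hat{\TT}_C^0$ and $\delta(\Lie(\TT_C))$ generate those elements. We argue by induction on the number of vertices, using the differential relation $\hat\nabla_Y dX=d(Y\graft X)-dX\circ(dY-\delta Y)$.
\begin{itemize}
\item \emph{Trees with the free edge at the root.} Write $\hat t=(t_1\cdots t_p\,\times\cdots)\graft\bullet$, with the free edge inserted at some position among the children. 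We show that $\hat t$ equals $d$ of a suitable tree minus lower-order composites, and conclude by induction.
\item \emph{Free edge in an aroma.} An element with the free edge in an aroma is $(u_1,\dots,u_n)_\circlearrowleft\, t$, where $u_1$ carries the free edge. We write it as $d\big((u_1',u_2,\dots,u_n)_\circlearrowleft\big)\,t$, where $u_1'$ is $u_1$ with the free edge removed, minus the terms in which the free edge is grafted elsewhere in the aroma. Those terms have the same vertex count but the edge in a different position, so we order by position and induct. The extra multiplication by the tree $t$ is harmless: the element acts as $Z\mapsto (u_1(Z),\dots)_\circlearrowleft\,t$, which is an $\AA_C$-combination of $d$ of an aroma times the generator-expressible map sending $Z$ to $t$. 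This last step needs care, since multiplication by a fixed element $t$ is not obviously expressible through the generators.
\end{itemize}

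The main obstacle is therefore this step: checking that an aroma with one free edge, multiplied by a tree, genuinely lies in the algebra generated by the $dX$ and $\delta X$, rather than merely in the larger $\End_{\AA_C}$.
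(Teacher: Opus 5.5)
The paper states this lemma without proof, so your argument has to stand on its own. Your check that $d(at)$ and $\delta(at)$ lie in the right-hand side is fine, but there are two genuine gaps.

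First, the case $\delta_s\circ\delta_r$ is exactly where the claimed \emph{direct} sum breaks, and you never verify $\Lie^0_{\AA_C}(\AA\TT_C)\cap\delta(\Lie_{\AA_C}(\AA\TT_C))=0$. There are two possible readings.
If a bare free edge is admitted in $\Lie^0_{\AA_C}(\AA\TT_C)$, so that $[s,[r,\cdot\,]]$ belongs to it, then by the same token $\delta_r=[r,\cdot\,]$ belongs to it, and the sum is not direct.
If it is not admitted, then $\delta_s\circ\delta_r$ lies in neither summand, although it certainly lies in $\El_{\AA_C}(\Lie_{\AA_C}(\AA\TT_C))$. This is the reading that $\oplus$ forces, and it matches the recursion defining $\tau$ on commutators, whose base case is a composite of elements of $\hat T_C^0$. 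The map $\delta_s\circ\delta_r$ is not even a derivation, since
\[
\delta_s\delta_r[x,y]-[\delta_s\delta_r x,y]-[x,\delta_s\delta_r y]=[[r,x],[s,y]]+[[s,x],[r,y]]\neq 0
\]
for distinct trees $r,s,x,y$ in the free Lie algebra. So it is not of the form $\delta_X$.
Either way, your ``$\subseteq$'' does not give the stated decomposition. What your closure argument really produces is $\Lie^0_{\AA_C}(\AA\TT_C)$ plus the associative algebra generated by the $\delta_X$, i.e.\ iterated brackets around a bare free edge.

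Second, in ``$\supseteq$'' the step you single out is not the obstacle. Multiplication by a fixed $t$ is harmless, because $d(a)\,t=d(at)-a\,d(t)$ lies in $\El_{\AA_C}(\Lie_{\AA_C}(\AA\TT_C))$ for every multi-aroma $a$. The real obstruction is planar position, for three reasons.
\begin{enumerate}
\item On an aroma, $d$ grafts the new edge leftmost at each vertex, hence to the left of the crossed edge; for example $d\big((\forestLE)_\circlearrowleft\big)=(\forestPE)_\circlearrowleft$.
\item Precomposing with $\delta_s$, or with a tree $u$ with a free edge, only feeds $[s,Z]$ or $u(Z)$ into that leftmost slot.
\item Once the argument $Z$ sits inside an aroma, every later factor of a composition acts $\AA_C$-linearly and leaves that aroma untouched.
\end{enumerate}
Hence no element of $\El_{\AA_C}(\Lie_{\AA_C}(\AA\TT_C))$ has its free edge attached at the root of some $t_i$ of an aroma $(t_1,\dots,t_n)_\circlearrowleft$ to the right of the crossed edge of $t_i$. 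An example is $Z\mapsto(\forestSD(Z))_\circlearrowleft\,t$, which is exactly the kind of aroma with a free edge the paper itself uses in $(\forestSD,\forestTD,\forestUD)_\circlearrowleft$.
For trees you move the free edge past a sibling $s$ by composing with $\delta_s$. In an aroma there is nothing to bracket with in place of the crossed edge, so your induction ``by position'' cannot close. A correct version has to restrict $\Lie^0_{\AA_C}(\AA\TT_C)$ to the reachable free-edge positions and prove the inclusion for exactly that class.
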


Using the unique decomposition of elements of $T_C^0$ into compositions of elements of $\hat{T}_C^0$, we define the trace and the divergence maps. Examples of computations are presented in Table \ref{table:ex_planar_aroma_tree_comput} and Appendix \ref{app:AT_examples}.

We now define the trace combinatorially on planar aromatic trees,
in a way that reflects the cyclic invariance of the algebra.
\begin{definition}
The trace is the $\AA_C$-linear map $\tau\colon \El_{\AA_C}(\Lie_{\AA_C}(\AA\TT_C))\rightarrow \AA_C$ that vanishes on vanishes on $\delta(\Lie_{\AA_C}(\AA\TT_C))$ and is given on $\Lie_{\AA_C}^0(\AA\TT_C)$ by the following.
If the free edge is on a tree $t$:
\[
\tau(t)=(t_1,\dots,t_n)_\circlearrowleft,\quad t=t_1\circ\dots\circ t_n, \quad t_1,\dots, t_n\in \hat{T}_C^0.
\]
If the free edge is on an aroma:
\[
\tau((t_1,\dots,t_n)_\circlearrowleft t)=(t_1,\dots,t_n)_\circlearrowleft(t).
\]
If the free edge is in a commutator:
\[
\tau([t_1,t_2])=\hat{\tau}([t_1,t_2])(\id),\]
where the auxiliary map $\hat{\tau}$ satisfies
\begin{align*}
\hat{\tau}(t_1\circ\dots\circ t_n)(\delta_x)&=(t_1,\dots,t_n\circ \delta_x)_\circlearrowleft t, \quad t_1,\dots, t_n\in \hat{T}_C^0,\quad x\in \Lie(\TT_C),\\
\hat{\tau}([t_1,t_2])(\delta_x)&=\hat{\tau}(t_1)(\delta_x\circ\delta_{t_2}),\quad t_1\in \Lie^0(\TT_C),\quad t_2\in \Lie(\TT_C).
\end{align*}
The divergence map is then defined by
\[
\Div\colon \Lie_{\AA_C}(\AA\TT_C)\rightarrow\AA_C,\quad \Div=\tau\circ d.
\]
\end{definition}

\begin{figure}[ht]
\begin{longtable}{|C|C|C|}
\hline
t\in \Lie_{\AA}(\AA\TT) & d(t) & \Div(t) \\\hline
\forestCE & \forestDE & (\forestEE)_\circlearrowleft \\\hline
\forestFE & \forestGE+\forestHE & (\forestIE,\forestJE)_\circlearrowleft+(\forestKE)_\circlearrowleft \\
(\forestLE)_\circlearrowleft \forestME & (\forestNE)_\circlearrowleft \forestOE+(\forestPE)_\circlearrowleft \forestQE & (\forestRE)_\circlearrowleft (\forestSE)_\circlearrowleft+(\forestTE)_\circlearrowleft \\\hline
\forestUE & \forestVE+\forestWE+\forestXE & (\forestYE,\forestAF,\forestBF)_\circlearrowleft+(\forestCF,\forestDF)_\circlearrowleft+(\forestEF)_\circlearrowleft \\
\forestFF & \forestGF+\forestHF+\forestIF & (\forestJF,\forestKF)_\circlearrowleft+(\forestLF,\forestMF)_\circlearrowleft+(\forestNF)_\circlearrowleft \\
(\forestOF)_\circlearrowleft \forestPF & (\forestQF)_\circlearrowleft \forestRF+(\forestSF)_\circlearrowleft \forestTF+(\forestUF)_\circlearrowleft \forestVF & (\forestWF)_\circlearrowleft (\forestXF,\forestYF)_\circlearrowleft+(\forestAG)_\circlearrowleft (\forestBG)_\circlearrowleft+(\forestCG)_\circlearrowleft \\
(\forestDG)_\circlearrowleft \forestEG & (\forestFG)_\circlearrowleft \forestGG+(\forestHG)_\circlearrowleft \forestIG+(\forestJG)_\circlearrowleft \forestKG & (\forestLG)_\circlearrowleft(\forestMG)_\circlearrowleft+(\forestNG)_\circlearrowleft+(\forestOG)_\circlearrowleft \\
(\forestPG)_\circlearrowleft \forestQG & (\forestRG)_\circlearrowleft \forestSG+(\forestTG)_\circlearrowleft \forestUG+(\forestVG)_\circlearrowleft \forestWG & (\forestXG)_\circlearrowleft(\forestYG)_\circlearrowleft+(\forestAH)_\circlearrowleft+(\forestBH)_\circlearrowleft \\
(\forestCH,\forestDH)_\circlearrowleft \forestEH & (\forestFH,\forestGH)_\circlearrowleft \forestHH+2(\forestIH,\forestJH)_\circlearrowleft \forestKH & (\forestLH,\forestMH)_\circlearrowleft (\forestNH)_\circlearrowleft+2(\forestOH,\forestPH)_\circlearrowleft \\
(\forestQH)_\circlearrowleft (\forestRH)_\circlearrowleft \forestSH & (\forestTH)_\circlearrowleft (\forestUH)_\circlearrowleft \forestVH+2(\forestWH)_\circlearrowleft (\forestXH)_\circlearrowleft \forestYH & (\forestAI)_\circlearrowleft (\forestBI)_\circlearrowleft (\forestCI)_\circlearrowleft+2(\forestDI)_\circlearrowleft (\forestEI)_\circlearrowleft \\
{}[\forestFI,\forestGI] & [\forestHI,\forestII]+[\forestJI,\forestKI]+[\forestLI,\forestMI] & (\forestNI,\forestOI)_\circlearrowleft-(\forestPI,\forestQI)_\circlearrowleft+(\forestRI)_\circlearrowleft\\&&-(\forestSI)_\circlearrowleft+(\forestTI)_\circlearrowleft-(\forestUI)_\circlearrowleft \\
\hline
\caption{Divergence of elements of $\Lie_{\AA}(\AA\TT)$ of order up to three (see also Appendix \ref{app:AT_examples}).}
\label{table:ex_planar_aroma_tree_comput}
\end{longtable}
\end{figure}

Planar aromatic trees yield an important example of tracial post-Lie-Rinehart algebra.
\begin{proposition}
The tuple $(\AA_C, \Lie_{\AA_C}(\AA\TT_C),[-,-], \rho,\graft,\tau)$ is a \(k\)-tracial post-Lie-Rinehart algebra.
\end{proposition}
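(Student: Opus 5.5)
We must show that the planar aromatic forests satisfy Definition~\ref{de:tracial-LR}, given that the preceding proposition already makes $(\AA_C,\Lie_{\AA_C}(\AA\TT_C),[-,-],\rho,\graft)$ a post-Lie-Rinehart algebra. The flatness and parallel-torsion conditions of Definition~\ref{de:post-LR} say that $\Lie_{\AA_C}(\AA\TT_C)$ is a Lie-Rinehart module over itself via $\graft$, so only two things remain. First, $\tau$ must be well defined and $\AA_C$-linear on $\El_{\AA_C}(\Lie_{\AA_C}(\AA\TT_C))=\Lie^0_{\AA_C}(\AA\TT_C)\oplus\delta(\Lie_{\AA_C}(\AA\TT_C))$. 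Second, it must satisfy (i) the $(R,L)$-module morphism property $\tau(\hat\nabla_X\nu)=X\graft\tau(\nu)$ and (ii) cyclicity $\tau(\nu\circ\mu)=\tau(\mu\circ\nu)$.

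\textbf{Well-definedness.} I would rely on the unique factorisation of elements of $T_C^0$ as compositions of elements of $\hat T_C^0$. This makes $\tau(t)=(t_1,\dots,t_n)_\circlearrowleft$ unambiguous on trees with a free edge. The aroma case is defined directly, and the commutator case reduces, through the recursion for $\hat\tau$, to the tree case with factors $\delta_x$ inserted. Since the planar aroma is by construction invariant only under cyclic rotation, no further identification is needed.

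\textbf{Cyclicity (ii).} By Lemma~\ref{lemma:elem_end} and Proposition~\ref{prop:gen-El}, $\El$ is generated as an algebra by $\hat{\TT}_C^0$, by aromas carrying one free edge, and by $\delta(\Lie)$. By bilinearity it therefore suffices to check (ii) on monomials $\nu=\nu_1\circ\dots\circ\nu_k$ in these generators and to show that $\tau$ is invariant under rotating a single factor to the other end. The cases split as follows.
\begin{enumerate}
\item For pure compositions of elements of $\hat T_C^0$, the trace is the cyclic list itself, so rotation invariance is exactly the defining cyclic invariance of planar aromas.
\item If some factor is an aroma with a free edge $(s_1,\dots,s_p)_\circlearrowleft t$, then $\mathrm{Im}$ of that factor is $\AA_C t$-like. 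Hence $\nu\circ\mu$ is again an aroma factor times $\mu$-images, and both sides reduce to the aroma evaluated on the composed tree. I would check this on rank-one endomorphisms, using the identity $\tau(a\,t\circ\mu)=a(\mu(t))$ in both orders.
\item For monomials containing $\delta_x$ factors, the recursion defining $\hat\tau$ (the rule $\hat\tau([t_1,t_2])(\delta_x)=\hat\tau(t_1)(\delta_x\circ\delta_{t_2})$) is designed so that $\delta$'s are moved cyclically into the last slot. Cyclicity then follows by induction on the number of $\delta$ factors, reducing to case 1. Traces of pure $\delta$-monomials vanish on both sides.
\end{enumerate}

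\textbf{Module-morphism property (i).} Here $X=a t$, and by $\AA_C$-linearity and the Leibniz rule for $\graft$ on multi-aromas it suffices to take $X=t$ a planar tree, since commutators then follow from the defining identity $[t_1,t_2]\graft a=\Ass(t_1,t_2,a)-\Ass(t_2,t_1,a)$. By \eqref{eq: Leibnitz-on-End}, $\hat\nabla_t$ is a derivation of $\circ$, and $t\graft(\cdot)$ is a derivation of the cyclic list $(t_1,\dots,t_n)_\circlearrowleft$, acting factorwise. It is therefore enough to prove $\tau(\hat\nabla_t s)=t\graft\tau(s)$ for a single generator $s\in\hat T^0_C$. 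This is the core computation. By definition $(\hat\nabla_t s)(y)=t\graft s(y)-s(t\graft y)$. Grafting $t$ onto $s(y)$ produces graftings onto vertices of $s$, which give $t\graft s$ with the free edge undisturbed, together with graftings onto vertices of $y$, which cancel against $s(t\graft y)$. Hence $\hat\nabla_t s=t\graft s$ in the sense of \eqref{eq:graft-planar-tree-tree-with free-edge}, and therefore $\tau(\hat\nabla_t s)=(t\graft s)_\circlearrowleft=t\graft(s)_\circlearrowleft$. For $\delta$ generators, the base case of Proposition~\ref{prop:gen-El} gives $\hat\nabla_t\delta_x=\delta_{t\graft x}$ (parallel torsion), and both sides vanish or are handled by the $\hat\tau$ recursion.

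I expect the main obstacle to be the interplay of $\hat\nabla_t$ and cyclicity with the $\delta$ terms, that is, verifying that the ad hoc $\hat\tau$ recursion is simultaneously cyclic and $\graft$-equivariant. There I would first check the order-three examples of Table~\ref{table:ex_planar_aroma_tree_comput} and then set up an induction on the number of commutators.
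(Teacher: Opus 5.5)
The paper states this proposition without proof, so a direct verification like yours is the natural route. Your tree part is sound. For every $w\in\TT^0_C$ the graftings of $t$ onto the input cancel, so $\hat\nabla_t w=t\graft w$. For products of elements of $\hat{\TT}^0_C$, the derivation argument together with the cyclic invariance of planar aromas then gives (i) and (ii).

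The gap is the rest of $\El$, which is exactly what you leave to an unspecified induction. Your reduction of (i) to a single generator $s\in\hat T^0_C$ only works when the trace of a product is the cyclic list of its factors. It therefore does not reach mixed monomials such as $\delta_x\circ s$ or $s_1\circ\delta_y\circ s_2$, nor the aroma-type elements $y\mapsto\alpha(y)\,u$ coming from $d(a)t$; ``handled by the $\hat\tau$ recursion'' is not an argument.

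The missing observation is \emph{absorption}: for $s\in\hat T^0_C$ one has $s\circ\delta_x\in\hat{\TT}^0_C$. Inserting $[x,y]$ into the free edge at the root only rearranges the children of the root, so the new free edge stays at the root. Hence every monomial with at least one tree factor becomes, after a cyclic rotation, a product of elements of $\hat{\TT}^0_C$ (absorb each $\delta$ into the tree factor to its left). That is precisely what the $\hat\tau$ recursion evaluates. Your computation $\hat\nabla_t w=t\graft w$ then applies verbatim to the absorbed factors, where it reads $\hat\nabla_t(s\circ\delta_x)=(t\graft s)\circ\delta_x+s\circ\delta_{t\graft x}$ (grafting onto the vertices of $s$, resp.\ of the absorbed $x$). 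Property (i) transfers back along rotations because $\hat\nabla_t$ is a derivation of $\circ$ and (ii) holds.

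Aroma-type products need their own short check. Any product containing such a factor has rank one, $A\circ(\alpha u)\circ B\colon y\mapsto\alpha(B(y))\,A(u)$, with trace $\alpha(B(A(u)))$ in every rotation. Moreover $\hat\nabla_t(\alpha u)=(t\graft\alpha)u+\alpha\,(t\graft u)$ gives $\tau(\hat\nabla_t(\alpha u))=t\graft(\alpha(u))$. No induction on the number of commutators is needed.

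Two definitional points must also be made explicit.

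First, products $\delta_{x_1}\circ\cdots\circ\delta_{x_k}$ with $k\ge 2$ lie in $\El$ but not in $\delta(\Lie_{\AA_C}(\AA\TT_C))$, and the paper's formulas assign them no value in $\AA_C$. Your ``traces of pure $\delta$-monomials vanish'' is therefore an extension of the definition. It is a consistent one. By parallel torsion $\hat\nabla_t\delta_x=\delta_{t\graft x}$, so their span is stable under $\hat\nabla_t$ and under rotations. It also meets the span of the other monomials only in $0$: evaluate on a tree $z$ larger than everything involved; pure $\delta$-monomials keep $z$ as a letter of the free Lie algebra, while all other monomials bury it inside a larger tree or an aroma.

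Second, unique factorisation gives well-definedness only on trees with a free edge. Since $\tau$ must be a function on endomorphisms, formal commutator representatives related by antisymmetry and the Jacobi identity must receive equal traces. This follows from the freeness of $\Lie(\TT_C)$, which makes the $\delta$-composite applied after the leftmost tree factor uniquely determined by the endomorphism. It has to be argued, not inferred from the cyclic invariance of aromas.
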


The main result of this section is that the Lie algebra of planar aromatic trees is a free object, which generalises \cite{Floystad20tup} to the post-Lie context.
\begin{theorem}
\label{thm:free_tpLR}
The free \(k\)-tracial post-Lie-Rinehart algebra over the set $C$ is the space of planar aromatic trees \((\AA_C, \Lie_{\AA_C}(\AA\TT_C),[-,-], \rho,\graft,\tau)\), that is, for every set map \(F : C \longrightarrow L\) where \((R, L,[-,-], \rho,\rhd,\tau)\) is a \(k\)-tracial post-Lie-Rinehart algebra, there exists a unique (epi-)morphism of \(k\)-tracial post-Lie-Rinehart algebra \(\F: (\AA_C, \Lie_{\AA_C}(\AA\TT_C)) \longrightarrow (R,L)\) extending the map \(F\).
\[
\begin{tikzcd}
	C & {(R, L,[-,-], \rho,\rhd,\tau)} \\
	{(\AA_C, \Lie_{\AA_C}(\AA\TT_C),[-,-], \rho,\graft,\tau)}
	\arrow["F", from=1-1, to=1-2]
	\arrow["\iota"', hook, from=1-1, to=2-1]
	\arrow["{\F}"', dashed, two heads, from=2-1, to=1-2]
\end{tikzcd}
\]
\end{theorem}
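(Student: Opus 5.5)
The plan is to construct $\F$ in two layers, first on the Lie algebra of planar trees and then on aromas, and to check that the trace and divergence are respected at each stage. Uniqueness will come for free from the generation properties.

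\textbf{Tree part.} Forgetting $R$, $\rho$ and $\tau$, the structure $(L,[-,-],\rhd)$ is a $k$-post-Lie algebra. Proposition~\ref{prop:free_post_Lie} therefore gives a unique post-Lie morphism $\F_0\colon \Lie(\TT_C)\to L$ extending $F$.

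\textbf{Endomorphism part.} Next I extend $\F_0$ to the free-edge objects by
\[
\F(t_1\circ\cdots\circ t_n)=\F(t_1)\circ\cdots\circ\F(t_n)
\]
on $\Tfe$, using the unique decomposition into elements of $\hat{T}_C^0$. On a tree $t\in\hat{T}_C^0$ with a free edge at the root, written $t=(s_1\cdots s_{i}\,\times\, s_{i+1}\cdots s_p)\graft\bullet_c$, I define $\F(t)$ as the corresponding iterated expression built from $d$, $\hat\nabla$ and compositions in $\El_R(L)$. Concretely, I will show inductively on $p$ that each such tree equals an explicit polynomial in the elements $d(s)$ and $\delta(s)$, $s\in\Lie(\TT_C)$, modulo trees of fewer vertices. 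The identities used are the two base-case computations in the proof of Proposition~\ref{prop:gen-El}, namely
\[
\hat\nabla_Y dX=d(Y\rhd X)-dX\circ(dY-\delta Y),\qquad \hat\nabla_Y\delta X=\delta(dX(Y)).
\]
Read in the free algebra, these let one move a left-grafted branch past the free edge. Lemma~\ref{lemma:elem_end} guarantees that these operators generate. I then set $\F(\delta_x)=\delta\F(x)$.

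\textbf{Aroma part.} On aromas I define
\[
\F\big((t_1,\dots,t_n)_\circlearrowleft\big)=\tau\big(\F(t_1)\circ\cdots\circ\F(t_n)\big).
\]
This is well defined because $\tau$ is cyclically invariant (Definition~\ref{de:tracial-LR}(ii)). I extend multiplicatively to $\AA_C$, and then $\F(at)=\F(a)\F(t)$ on $\Lie_{\AA_C}(\AA\TT_C)$.

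\textbf{Verification.} The following properties must be checked.
\begin{enumerate}
\item $\F$ is a $k$-algebra map $\AA_C\to R$ and is $\F$-semilinear on modules; both hold by construction.
\item $\F$ preserves the bracket and $\graft$. The key case is $\F(t\graft a)=\F(t).\F(a)$ for an aroma $a$. Using Definition~\ref{de:tracial-LR}(i), $X.\tau(\nu)=\tau(\hat\nabla_X\nu)$, together with the Leibniz rule \eqref{eq: Leibnitz-on-End}, this reduces to
\[
\F(t\graft s)=\hat\nabla_{\F(t)}\F(s)\qquad\text{for } s\in\hat{\TT}_C^0,
\]
which is the defining property of the endomorphism extension.
\item $\F$ intertwines $d$ and $\tau$. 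Applying $\F$ to the combinatorial trace gives exactly $\tau$ applied to the image, including the commutator case through $\hat\tau$ and the convention that $\tau$ vanishes on $\delta$. It follows that $\F\circ\Div=\Div\circ\F$.
\end{enumerate}

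\textbf{Uniqueness and surjectivity.} Any tracial morphism is forced on trees by Proposition~\ref{prop:free_post_Lie}. It is then forced on $\hat{\TT}_C^0$ because it must commute with $d$ and $\hat\nabla$, and on aromas because it must commute with $\tau$. These elements generate everything. The morphism is an epimorphism onto the tracial sub-Lie-Rinehart algebra generated by $F(C)$, which is where the image lands.

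\textbf{Main obstacle.} I expect the hardest step to be the endomorphism part: proving that every root-free-edge planar tree is uniquely expressible through $d$, $\delta$ and $\hat\nabla$, so that $\F$ on $\hat{\TT}_C^0$ is well defined and compatible with grafting. The difficulty is that in the planar setting the free edge has a position among the branches, and branches to its left produce torsion terms $\delta$. My approach is induction on the number of branches to the left of the free edge. In each step I peel off the leftmost branch $s$ via the identity
\[
\hat\nabla_{s}(\cdot)=s\graft(\cdot),
\]
and show that the correction terms match $dX\circ\delta Y$ exactly, mirroring the computation in Proposition~\ref{prop:gen-El}.
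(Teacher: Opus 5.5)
Your construction follows the paper's proof step for step. You use the free post-Lie morphism on $\Lie(\TT_C)$, extend it to an algebra map on elementary endomorphisms, send aromas to $\tau$ of compositions (well defined by cyclicity), and extend multiplicatively with $\F(at)=\F(a)\F(t)$. For the endomorphism step, which the paper takes from the post-Lie literature, well-definedness is immediate if you set $\F(s)(X)=(\F(s_1)\cdots\F(s_i)\,X\,\F(s_{i+1})\cdots\F(s_p))\rhd F(c)$ as in Section~\ref{sec:num}. The identities of Proposition~\ref{prop:gen-El} are then needed only to show that this lies in $\El_R(L)$; you do not need the "unique expressibility" you flag as the main obstacle. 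Your anchor check via $\F(t\graft s)=\hat\nabla_{\F(t)}\F(s)$ is fine.

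The gap is Verification~3, which you assert and which does not follow from Definition~\ref{de:tracial-LR}. When the free edge sits on an aroma, the rule $\tau\big((t_1,\dots,t_n)_\circlearrowleft t\big)=(t_1,\dots,t_n)_\circlearrowleft(t)$ transports to the requirement $\tau\big(X\mapsto (X.g)\,Y\big)=Y.g$, i.e.\ $\Div(gY)=g\Div(Y)+Y.g$. Its first instance is the free identity $\Div(a\bullet)=a^2+\bullet\graft a$ with $a=\Div(\bullet)$. Axioms (i)--(ii) are invariant under $\tau\mapsto c\tau$ for $c\in k$, but this identity is not: when $g$ is itself a trace, the left side scales like $c^2$ and the right side like $c$.

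Here is an explicit failure. Let $R=k[x]$, $L=R\partial_x$, with $f\partial_x\rhd g\partial_x=fg'\partial_x$ (flat and torsion-free), so that $\El_R(L)=\End_R(L)\cong R$. Take $\tau(r)=2r$ and $F(\bullet)=x^2\partial_x$. Any morphism commuting with $\Div$ must have $\F(a)=\Div(x^2\partial_x)=4x$. Hence $\F(\Div(a\bullet))=16x^2+4x^2=20x^2$, while $\Div(\F(a\bullet))=\Div(4x^3\partial_x)=24x^2$. So no such morphism exists.

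The statement, and your step~3, therefore need the normalisation $\tau(X\mapsto (X.g)Y)=Y.g$ as an extra axiom. It holds for the canonical trace of a finitely generated projective module and in the Weitzenb\"ock example, and with it the aroma case of the divergence follows at once. The paper's concise proof passes over this same point.

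Step~3 also overstates in a second way. The free trace vanishes on $\delta$ by definition, but nothing forces $\tau(\delta Y)=0$ in the target. For the Weitzenb\"ock connection, $\tau(\delta Y)$ is pointwise $\mathrm{tr}(\mathrm{ad}_Y)$, which is nonzero on non-unimodular groups. So that convention does not transport, and $\F$ cannot intertwine $\tau$ on all of $\El$. What you should claim, and what suffices, is $\F\circ\Div=\Div\circ\F$. In $\tau(d(x))$ every $\delta$ occurs composed with a factor $d(t)$, and cyclicity absorbs it into an element of $\Tfe$. So only traces of elements of $\Tfe$ and of the rank-one aroma terms $d(a)t$ are ever needed.
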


Let us now present a concise proof of Theorem \ref{thm:free_tpLR}. We refer to \cite{Rahm26tup} for a fully detailed proof of this result.
\begin{proof}
As $\Lie(\TT_C)$ is the free post-Lie algebra, there exists a unique post-Lie algebra morphism $\F\colon \Lie(\TT_C) \rightarrow L$ (see Proposition \ref{prop:free_post_Lie}).
Following the proof of \cite{MuntheKaas13opl}, it also naturally yields a unique algebra morphism $\psi\colon \El_k(\Lie(\TT_C))\rightarrow \El_R(L)$, which naturally induces the following map (analogously to the pre-Lie-Rinehart case \cite{Floystad20tup}):
\[
\hat \psi\colon \El_k(\Lie(\TT_C))/[\TT_C^0,\TT_C^0]_\circ \rightarrow \El_R(L)/[\El_R(L),\El_R(L)]_\circ,
\]
where $[u,v]_\circ=u\circ v-v\circ u$.
We thus obtain
\[
\F\colon \TT_C^0/[\TT_C^0,\TT_C^0]_\circ \rightarrow R, \quad \F=\hat \tau\circ \hat \psi,
\]
where we factor the trace $\hat \tau\colon \El_R(L)/[\El_R(L),\El_R(L)]_\circ\rightarrow R$.
As $\TT_C^0/[\TT_C^0,\TT_C^0]_\circ$ is exactly the space of planar aromas, this defines $\F$ on aromas, and one thus extends $\F$ to the symmetric algebra of multiaromas $\AA_C$ as an algebra morphism $\F\colon \AA_C \rightarrow R$.
Then, we obtain the unique \(k\)-tracial post-Lie-Rinehart algebra $\F$ by
\[
\F\colon (\AA_C, \Lie_{\AA_C}(\AA\TT_C))\rightarrow (R,L), \quad \F(at)=\F(a)\F(t).
\]
Hence the result.
\end{proof}

\section{Numerical preservation of divergence-free properties on manifolds}
\label{sec:num}

\subsection{Post-Lie-Rinehart structure of the connection algebra}

Consider a manifold $\MM$ on $k$ equipped with a frame basis $(E_d)$, defined globally for simplicity\footnote{The operations used in Lie-group methods only require a local frame family (see \cite{Owren06ocf, Bronasco25hoi}).}.
The manifold is naturally equipped with the Weitzenböck connection, that we write as a product on the vector fields $\XM$:
\[
Y \tri X= Y[x^i] E_i,\quad X=x^i E_i,
\]
where we use the Einstein summation notation.
The connection $\tri$ is naturally curvature-free $R=0$, but does not have constant torsion in general.
Thus, we further assume that the $(E_d)$ span a Lie algebra, or equivalently that $\MM$ is locally a Lie group, or equivalently that we have constant torsion $\nabla T=0$ (see \cite{Nomizu54iac}).
Let the Jacobi bracket $\llbracket -,-\rrbracket_J$ and the bracket derived from the torsion $[-,-]=-T$, given explicitly for the Weitzenböck connection by
\[
[X,Y]=x^i y^j \llbracket E_i,E_j\rrbracket_J,\quad X=x^i E_i,\quad Y=y^j E_j.
\]
Note that the two brackets are linked by the identity
\[
\llbracket X,Y \rrbracket_J=[X,Y]+X\tri Y- Y\tri X.
\]
Then, the connection algebra $(\XM,[-,-],\tri)$ is a post-Lie algebra \cite{MuntheKaas13opl}.

A post-Lie-Rinehart structure on the $\FM$-module $\XM$ is conveniently defined when using the Weitzenböck connection.
Let the anchor map
\[
\rho(X)(\phi)=X[\phi],\quad X=x^i E_i,\quad \phi\in \FM,
\]
the trace
\[
\tau(u)=u^i_i,\quad u\in\End_{\FM}(\XM), \quad u(E_j)=u^i_j E_i,
\]
and the divergence
\[
\Div(X)=E_i[x^i], \quad X=x^i E_i.
\]
\begin{proposition}
Assume that $\tri$ has constant torsion, then $(\FM,\XM,[-,-],\rho,\tri,\tau)$ is a $k$-tracial post-Lie-Rinehart algebra.
\end{proposition}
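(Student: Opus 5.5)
We have to check three things in turn: that $(\FM,\XM,[-,-],\rho)$ is a Lie-Rinehart algebra, that $\tri$ is a flat connection with parallel torsion (so Definition~\ref{de:post-LR} holds), and that $\tau$ satisfies the two conditions of Definition~\ref{de:tracial-LR}.

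\emph{Lie-Rinehart structure and connection.} We take the bracket to be $[X,Y]=x^iy^j\llbracket E_i,E_j\rrbracket_J$. Since the $E_d$ span a Lie algebra, $\llbracket E_i,E_j\rrbracket_J=c_{ij}^kE_k$ with constant $c_{ij}^k$. The bracket is $\FM$-bilinear, so the Leibniz rule of Definition~\ref{de:LR} would fail if read literally. I would resolve this as follows. The Lie-Rinehart bracket is the Jacobi bracket $\llbracket-,-\rrbracket_J$, for which $\rho(X)=X[\cdot]$ is a Lie morphism and the Leibniz rule is classical. The torsion of $\tri$ with respect to this bracket is
\[
T(X,Y)=X\tri Y-Y\tri X-\llbracket X,Y\rrbracket_J=-[X,Y],
\]
by the identity relating the two brackets.

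Next, $Y\tri(fX)=Y[f]X+fY\tri X$ is immediate from $Y\tri X=Y[x^i]E_i$, and $\tri$ is $\FM$-linear in $Y$, so $\tri$ is a connection. For flatness, note that $E_i\tri E_j=0$. Hence
\[
\mathcal R(Y,Z)X=\bigl(Y[Z[x^i]]-Z[Y[x^i]]-\llbracket Y,Z\rrbracket_J[x^i]\bigr)E_i=0.
\]
For parallel torsion, $T(E_i,E_j)=-c_{ij}^kE_k$ has constant coefficients, so $\tri$ applied to it vanishes. Both sides of condition (ii) in Definition~\ref{de:post-LR} are tensorial derivations, so it suffices to check them on frame elements, where every term is zero.

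\emph{Trace.} In the frame, $\hat\nabla_Xu$ has components $X[u^i_j]$, since $\tri E_j=0$. Therefore
\[
\tau(\hat\nabla_Xu)=X[u^i_i]=X.\tau(u),
\]
which is condition (i). Cyclicity $\tau(u\circ v)=u^i_jv^j_i=\tau(v\circ u)$ is the usual matrix trace identity, giving condition (ii). Here $\tau$ is defined on all of $\End_{\FM}(\XM)$, hence a fortiori on $\El_R$.

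Finally, $\Div(X)=\tau(dX)$ with $dX(E_j)=E_j[x^i]E_i$, so $\Div(X)=E_i[x^i]$, matching the stated formula.

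The only real obstacle is the bracket convention, namely identifying $[-,-]$ with the torsion (Jacobi) data so that Leibniz holds. I would first try to state precisely that the Lie-Rinehart bracket is the Jacobi one and that $[-,-]=-T$. Everything else reduces to the fact that the frame is $\tri$-parallel and the structure constants are constant.
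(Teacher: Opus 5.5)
Your verification is correct. The paper states this proposition without proof, so there is no argument to compare against. Your direct frame computation is presumably the intended one:
\begin{itemize}
\item the Jacobi bracket serves as the Lie--Rinehart bracket, with $[-,-]=-T$;
\item flatness follows because $\rho$ is a Lie morphism;
\item since $E_i\tri E_j=0$, one has $(\hat\nabla_X u)^i_j=X[u^i_j]$, which gives $\tau(\hat\nabla_Xu)=X[\tau(u)]$;
\item cyclicity is the matrix-trace identity over the commutative ring $\FM$.
\end{itemize}

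Your reading of the bracket convention agrees with how the paper uses such tuples elsewhere. In the axioms of Definition~\ref{de:post-LR}, $T(Y,X)=[X,Y]$ plays the role of the post-Lie bracket. In the covariant derivation algebra example, $\llbracket-,-\rrbracket$ is the Lie bracket carrying the anchor, while $[-,-]$ is the post-Lie one.

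Two small remarks. First, parallel torsion is literally the hypothesis, since the paper identifies ``constant torsion'' with $\nabla T=0$. Your detour through constant structure constants is therefore unnecessary, though equivalent, because $(\nabla_XT)(E_i,E_j)=-X[c_{ij}^k]E_k$. Second, in condition~(ii) it is the difference of the two sides, namely $(\nabla_XT)(Y,Z)$, that is $\FM$-multilinear. Neither side is tensorial on its own, since each carries a term $X[f]\,T(Y,Z)$ when $Y$ is replaced by $fY$. It is the difference that justifies checking on frame elements.
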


\begin{remark}
\label{rk:frozen_prod}
The Taylor expansions of numerical methods are not expressed directly in $\XM$ but rather in the enveloping algebra of $\XM$, equipped with the frozen product $\cdot$.
Four our choice of connection, the frozen product $\cdot$ acts on vector fields and functions as the differential operator
\[
(X_1\cdots X_p)\tri Y=((X_1\cdots X_p)\tri y^j) E_j,\quad
(X_1\cdots X_p)\tri \phi=x_1^{i_1}\dots x_p^{i_p} E_{i_1}[\dots E_{i_p}[\phi]\dots].
\]
It is showed in particular in \cite{Ebrahimi15otl, Busnot25pha} that the universal enveloping algebra of post-Lie algebras (respectively post-Lie-Rinehart algebras) are, under technical assumptions, post-Hopf algebras (respectively post-Hopf algebroids).
\end{remark}

\subsection{Divergence-free vector fields, volume-preservation, and planar aromatic trees}

Let a smooth ordinary differential equation on $\MM$ led by a divergence-free vector field
\begin{equation}
\label{eq:ODE}
y'(t)=F(y(t)),\quad y(0)=y_0,\quad F=x^i E_i,\quad \Div(F)=0,
\end{equation}
where $F$ is a smooth Lipschitz vector field.
The divergence characterises geometric properties in specific cases.
\begin{proposition}
\label{prop:vol_pres_div_free}
Assume that $\MM$ is compact and equipped with a bi-invatiant Riemannian metric and the corresponding volume form $d\vol$.
Let the volume of a measurable set $\Omega$ of $\MM$ be $\Vol(\Omega)=\int_\Omega d\vol$.
We say that a flow $\varphi_t$ is volume-preserving if for all measurable $\Omega$ and all $t>0$, we have $\Vol(\varphi_t(\Omega))=\Vol(\Omega)$.
Then, the flow of \eqref{eq:ODE} is volume-preserving if and only if $\Div(F)=0$.
\end{proposition}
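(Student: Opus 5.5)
We plan to reduce the statement to the classical Liouville formula for the Jacobian of a flow, expressed with respect to the Riemannian volume form. The key is to show that the algebraic divergence $\Div(X)=E_i[x^i]$ of the Weitzenböck setting coincides with the Riemannian divergence $\operatorname{div}_{d\vol}$, characterised by $\mathcal{L}_X d\vol=(\operatorname{div}_{d\vol}X)\,d\vol$.

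\textbf{Step 1: the frame is volume-preserving.} Since $\MM$ is compact with a bi-invariant metric, and the frame $(E_i)$ spans a Lie algebra (locally a Lie group), we take the $E_i$ to be invariant fields; left-invariant and right-invariant fields are each orthonormalisable simultaneously up to a constant matrix, so we may assume $(E_i)$ orthonormal. Then $d\vol=e^1\wedge\dots\wedge e^n$ up to a constant, where $(e^i)$ is the dual coframe. The flow of each $E_i$ is a translation, hence an isometry, hence volume-preserving, so $\operatorname{div}_{d\vol}E_i=0$. Equivalently, using the Cartan formula and $de^k=-\tfrac12 c^k_{ij}e^i\wedge e^j$, we get $\operatorname{div}E_j=-\sum_k c^k_{kj}$. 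This vanishes because compactness gives unimodularity, i.e.\ $\operatorname{tr}\operatorname{ad}=0$, which holds for any Lie algebra with a bi-invariant metric since $\operatorname{ad}$ is skew.

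\textbf{Step 2: divergences agree.} We apply the product rule $\operatorname{div}(fY)=Y[f]+f\operatorname{div}Y$ to $X=x^iE_i$. Combined with Step 1, it gives $\operatorname{div}_{d\vol}X=E_i[x^i]=\Div(X)$. As a consistency check, this matches the divergence formula of the earlier nonholonomic-frame example with $\Gamma=0$ for the Weitzenböck connection, where the trace $\tau(u)=u^i_i$ is the frame trace.

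\textbf{Step 3: Liouville.} For the flow $\varphi_t$ of $F$ we have
\[
\tfrac{d}{dt}\varphi_t^*d\vol=\varphi_t^*(\mathcal{L}_F d\vol)=\varphi_t^*\big((\Div F)\,d\vol\big).
\]
Hence
\[
\tfrac{d}{dt}\Vol(\varphi_t(\Omega))=\int_{\varphi_t(\Omega)}\Div(F)\,d\vol.
\]
Global existence of the flow holds by compactness. If $\Div F=0$, the volume is constant. Conversely, if $\Div F(p)\neq0$, continuity gives a small ball $\Omega$ around $p$ on which $\Div F$ has a fixed sign. Then the derivative at $t=0$ is nonzero, so volume is not preserved for small $t>0$.

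The main obstacle is Step 1, namely justifying that the given frame may be taken invariant with $\operatorname{tr}\operatorname{ad}=0$. The statement's hypotheses (bi-invariant metric, frame spanning a Lie algebra) must be interpreted so that the $E_i$ are Killing or invariant fields. We handle this through the structure-constant computation, using the skew-symmetry of $\operatorname{ad}$ under the bi-invariant metric.
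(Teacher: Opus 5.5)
Your proposal is correct and follows the same route as the paper. The paper simply invokes Liouville's theorem for the Riemannian (Levi-Civita) divergence and asserts that it coincides with the Weitzenb\"ock divergence $\Div$ for a bi-invariant metric; your Steps 1--2 supply the justification of that coincidence which the paper leaves implicit (invariant frame fields are divergence-free because $\operatorname{ad}$ is skew, hence traceless), and one slip is harmless: with $[E_i,E_j]=c^k_{ij}E_k$ the Cartan computation gives $\operatorname{div}E_j=\sum_k c^k_{kj}=-\operatorname{tr}\operatorname{ad}_{E_j}$, so your sign is flipped, but this is immaterial since the quantity vanishes.
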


\begin{proof}
The preservation of volume is characterised by the divergence for the Levi-Civita connection \cite[Liouville's theorem]{Lee18itr}, which coincides with the Weitzenböck divergence $\Div$ in the case of a bi-invariant metric.
\end{proof}

\begin{ex}
Consider the differential equation on the special orthogonal group $\SO_d(\R)$:
\[y'(t)=A(y(t))y(t),\quad A\colon \SO_d(\R)\rightarrow \mathfrak{so}_d(\R).\]
Given a frame basis of right-invariant vector fields $E_i(y)=A_i y$ and $(A_i)$ basis of $\mathfrak{so}_d(\R)$, the Weitzenböck connection becomes the standard right-invariant connection on $\SO_d(\R)$.
If $A(y)= f^i(y) A_i$ and $E_i[f^i]=0$, then the flow preserves volume.
\end{ex}

\begin{remark}
Finding integrators that preserve volume in a general geometric context is a challenging open problem, for which this work is a first step. Indeed, this requires the design of a high-order analysis and the study of algebraic structures for representing the connection algebra for a general connection. We cite in particular the recent works \cite{Stava23tca, Stava24oca, MuntheKaas24gio} that introduce integrators and algebraic tools for connections with vanishing torsion and constant curvature.
The general case will be studied in future works.
\end{remark}

Planar aromatic trees represent specific vector fields and functions through the application of the elementary differential map.
\begin{definition}
Given a smooth vector field $F\in \XM$, the elementary differential map is defined on $\Lie_k(\TT)$ by
\[
\F^F(\forestVI)=F,\quad
\F^F((t_1\cdots t_p)\graft \forestWI)=(\F^F(t_1)\cdots\F^F(t_p))\tri F,\quad
\F^F([t_1,t_2])=[\F^F(t_1),\F^F(t_2)],
\]
where we use the frozen product of Remark \ref{rk:frozen_prod}.
The map extends to $\TT^0$ by
\[
\F^F(\forestXI)(X)=X,\quad
\F^F((t_1\cdots t_p)\graft \forestYI)(X)=(\F^F(t_1)(X)\cdots\F^F(t_p)(X))\tri F,
\]
and on aromas by
\[
\F^F(\textbf{1})=1,\quad
\F^F((t_1,\dots,t_n)_\circlearrowleft)=\F^F(t_1)^{i_1}(E_{i_2}) \cdots \F^F(t_n)^{i_n}(E_{i_1}).
\]
Finally, $\F^F$ extends into $\F^F\colon \Lie_{\AA}(\AA\TT)\rightarrow \XM$ as a morphism
\[
\F^F(a_1\dots a_n t)=\F^F(a_1)\dots \F^F(a_n) \F^F(t).
\]
\end{definition}

\begin{proposition}
The elementary differential $\F^F$ is a morphism of post-Lie-Rinehart algebras:
\[
\F^F\colon (\AA, \Lie_{\AA}(\AA\TT),[-,-], \rho,\graft,\tau) \rightarrow (\FM,\XM,[-,-],\rho,\tri,\tau).
\]
In particular, $\F^F$ commutes with the divergence:
\[
\Div\circ \F^F=\F^F\circ \Div.
\]
\end{proposition}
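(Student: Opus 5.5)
The natural route is the universal property of Theorem~\ref{thm:free_tpLR}, applied with $C=\{\forestVI\}$. The preceding proposition shows that $(\FM,\XM,[-,-],\rho,\tri,\tau)$ is a $k$-tracial post-Lie-Rinehart algebra. Hence the set map $F\colon \forestVI\mapsto F$ extends uniquely to a morphism of $k$-tracial post-Lie-Rinehart algebras
\[
\tilde\F\colon(\AA,\Lie_{\AA}(\AA\TT))\rightarrow(\FM,\XM).
\]
It then suffices to prove $\tilde\F=\F^F$, by checking that $\F^F$ satisfies the defining recursions of $\tilde\F$. Uniqueness in Theorem~\ref{thm:free_tpLR} then does the rest.

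\textbf{Trees.} I would argue by induction on the order. Write a planar tree as $(t_1\cdots t_p)\graft\forestWI$. In the enveloping algebra, the Guin--Oudom construction gives
\[
(t_1\cdots t_p)\graft\forestWI = (t_1\cdots t_p)\graft \forestWI,
\]
with the frozen (concatenation) product. A post-Lie morphism extends to a morphism of the enveloping post-Hopf structures. Under the Weitzenböck connection the frozen product acts as in Remark~\ref{rk:frozen_prod}. The recursive formula $\F^F((t_1\cdots t_p)\graft\forestWI)=(\F^F(t_1)\cdots\F^F(t_p))\tri F$ is therefore exactly what the post-Lie morphism forces. Compatibility with brackets holds by definition. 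So $\F^F=\tilde\F$ on $\Lie(\TT)$, which is Proposition~\ref{prop:free_post_Lie} specialised.

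\textbf{Free-edge trees and aromas.} Next I check that the extension of $\F^F$ to $\TT^0$ is the induced algebra map $\psi$ on elementary endomorphisms. Concretely, $\F^F(d t)=d(\F^F(t))$ and $\F^F(u\circ v)=\F^F(u)\circ\F^F(v)$ for $u,v\in\hat\TT^0$. The first identity is the Leibniz rule for $X\mapsto (\cdots)\tri F$ in the variable $X$. It uses that $\tri$ is flat, so a free edge grafted at a vertex corresponds to one factor of the frozen product being differentiated.

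I then need $\F^F$ on an aroma $(t_1,\dots,t_n)_\circlearrowleft$ to equal $\tau(\F^F(t_1)\circ\cdots\circ\F^F(t_n))$ with $\tau(u)=u^i_i$. In components, $(u_1\circ\cdots\circ u_n)^i_i=u_1{}^{i}_{j_2}u_2{}^{j_2}_{j_3}\cdots u_n{}^{j_n}_{i}$. This matches the contraction formula defining $\F^F$ on aromas, up to relabelling indices. Cyclic invariance of the trace makes this well defined on cyclic classes.

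\textbf{Commutator case.} The remaining trace axiom is the commutator case through $\hat\tau$, together with the vanishing on $\delta(\cdot)$. I must show $\tau(\delta X)=0$ for the Weitzenböck structure. Here $\delta X(Y)=-[Y,X]$ is a contraction against the structure constants, so this reduces to the vanishing of $c^i_{ij}$. I expect this to be the \textbf{main obstacle}. It holds for unimodular Lie algebras, in particular under the bi-invariant assumptions of Proposition~\ref{prop:vol_pres_div_free}, and I would verify it there or add it as a hypothesis. The $\hat\tau$ recursion then matches by the same index contraction as above.

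With all of this in place, $\F^F$ is a tracial morphism. Hence
\[
\Div\circ\F^F=\tau\circ d\circ\F^F=\tau\circ\F^F\circ d=\F^F\circ\tau\circ d=\F^F\circ\Div,
\]
which is the claimed commutation with the divergence.
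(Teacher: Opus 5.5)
Your route is the one the paper relies on. The paper states this proposition without a separate proof; it is meant to follow from Theorem~\ref{thm:free_tpLR} applied to the Weitzenb\"ock algebra of the preceding proposition, with $\F^F$ identified with the universal morphism. You also correctly compute $\tau(\delta X)=x^ic^k_{ik}=\operatorname{tr}(\operatorname{ad}_X)$, which shows exactly where that black box breaks.

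Your remedy, however, leaves a genuine gap. Adding unimodularity changes the statement, and it does not even repair the black-box argument. $\El_{\FM}(\XM)$ also contains words built only from $\delta$'s. On $\SO_3(\R)$, which is unimodular with a bi-invariant metric, one has $\tau(\delta F\circ\delta F)=f^af^b c^k_{al}c^l_{bk}$. This is the Killing form, nonzero for every nonzero $F$ with constant coefficients $f^i$. Such an $F$ is even divergence-free, yet it sends every planar aroma to zero, because each factor in $\hat{T}^0_C$ differentiates $F$ at its root. So for this $F$ no trace-compatible extension to all of $\El$ exists, with or without unimodularity. Uniqueness in Theorem~\ref{thm:free_tpLR} therefore cannot be what gives you $\tau\circ\F^F=\F^F\circ\tau$.

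The missing idea is that the divergence only needs trace compatibility on $d(\Lie_{\AA}(\AA\TT))$, and there no $\delta$ ever stands alone.
\begin{itemize}
\item Write $d(at)=d(a)\,t+a\,d(t)$. The trace of $d(a)\,t$ is the aroma evaluated on $t$, which matches $\tau(Z\mapsto\phi(Z)S)=\phi(S)$ in the target.
\item For $t\in\Lie(\TT)$, $d(t)$ is a sum of nested brackets with exactly one entry of the form $du$, where $u$ is a planar tree. As an endomorphism each term is $\pm\delta_{y_m}\circ\cdots\circ\delta_{y_1}\circ du$.
\item By cyclicity its trace equals $\pm\tau(du\circ\delta_{y_m}\circ\cdots\circ\delta_{y_1})$. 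Here $du\circ\delta_{y_m}\circ\cdots\circ\delta_{y_1}\in\Tfe$: insert the nested bracket, expanded as $y\cdot w-w\cdot y$, into the children word at the free edge. This is precisely what the paper's $\hat\tau$ encodes.
\item $\F^F$ respects this insertion because the frozen product satisfies $X\cdot Y-Y\cdot X=[X,Y]$, the $E_i$ having constant structure constants. Hence $\F^F(u\circ\delta_y)=\F^F(u)\circ\delta_{\F^F(y)}$.
\item Your index contraction then gives $\F^F(\tau(dt))=\tau(\F^F(dt))=\tau(d\,\F^F(t))$.
\end{itemize}
This proves $\Div\circ\F^F=\F^F\circ\Div$ with no unimodularity assumption. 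Accordingly, read the trace part of the morphism claim on the span of words containing at least one tree-with-free-edge factor, rather than adding a hypothesis to the statement.
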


The divergence-free assumption of $F$ yields degeneracies.
\begin{lemma}
\label{lemma:degeneracies}
Assume that $\Div(F)=0$. Then, the differential associated to the following aromas vanishes:
\[
\F^F((t_1\cdots t_p \cdot \times)\graft \forestAJ)_\circlearrowleft)=0.
\]
\end{lemma}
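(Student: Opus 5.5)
We need to show that the aroma consisting of a single tree $t\in\hat T^0$, whose root is the only vertex and has children $t_1,\dots,t_p$ followed (rightmost) by the crossed free edge, is sent to zero by $\F^F$.

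\textbf{Step 1: evaluate the tree with a free edge.} By the definition of $\F^F$ on $\TT^0$ with one-element aroma, $\F^F((t_1\cdots t_p\cdot\times)\graft\bullet)$ is the endomorphism
\[
X\longmapsto (\F^F(t_1)\cdots\F^F(t_p)\, X)\tri F ,
\]
since the $t_i$ contain no free edge, so $\F^F(t_i)(X)=\F^F(t_i)$. Write $Y_i=\F^F(t_i)=y_i^{j}E_j$.

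\textbf{Step 2: take the trace.} By the definition of $\F^F$ on aromas with $n=1$, the aroma evaluates to the contraction
\[
\sum_{i}\big((Y_1\cdots Y_p\, E_i)\tri F\big)^{i}.
\]
By Remark~\ref{rk:frozen_prod}, the frozen product is a differential operator with coefficients placed outside. Thus
\[
(Y_1\cdots Y_p E_i)\tri F = y_1^{j_1}\cdots y_p^{j_p}\, E_{j_1}[\cdots E_{j_p}[E_i[x^k]]\cdots]\,E_k .
\]
Taking the $i$-th component and summing gives
\[
y_1^{j_1}\cdots y_p^{j_p}\, E_{j_1}[\cdots E_{j_p}[E_i[x^i]]\cdots].
\]

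\textbf{Step 3: use the hypothesis.} The crossed edge sits in the last slot of the frozen product, so the corresponding derivation $E_i$ is applied first, directly to the coefficients $x^i$ of $F$. The inner expression is then $E_i[x^i]=\Div(F)=0$, and any further derivatives $E_{j_1}\cdots E_{j_p}$ of zero vanish, which proves the claim.

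\textbf{Main obstacle.} The delicate point is the ordering convention. We must check that placing the cross last in the planar ordering corresponds to the innermost derivative in Remark~\ref{rk:frozen_prod}. If the cross sat elsewhere, the non-commuting frame derivations $E_j$ and $E_i$ would have to be swapped via commutators, since $\llbracket E_i,E_j\rrbracket_J=c_{ij}^kE_k$ with constant structure constants. This would produce extra terms and no degeneracy.

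So the key check is that the iterated expression in Remark~\ref{rk:frozen_prod}, $E_{i_1}[\dots E_{i_p}[\phi]]$, applies the last factor innermost. I would confirm this from the Guin–Oudom recursion $(X\cdot W)\tri Z=X\tri(W\tri Z)-(X\tri W)\tri Z$. For the Weitzenböck connection, $X\tri E_i=0$ because frame fields have constant coefficients, so the correction term vanishes when the last factor is the frame field $E_i$. This confirms that the trace index only hits $x^i$ directly.
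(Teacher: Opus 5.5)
Your proof is correct and is the direct computation the lemma rests on (the paper states it without proof). By Remark~\ref{rk:frozen_prod}, the crossed edge in the last slot becomes the innermost derivation $E_i$, so the trace equals $y_1^{j_1}\cdots y_p^{j_p}E_{j_1}[\cdots E_{j_p}[\Div(F)]\cdots]=0$, while putting the cross first produces the commutator term recorded right after the lemma. One imprecision in your last paragraph: the Guin--Oudom correction term $(Y_1\tri(Y_2\cdots Y_pE_i))\tri F$ does not vanish (only its summand containing $Y_1\tri E_i$ does), but every surviving summand keeps $E_i$ in the last slot, so your conclusion stands.
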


The degeneracies of Lemma \ref{lemma:degeneracies} prove useful in the design of efficient pseudo-divergence-free methods as it allows to use only first derivatives to represent some differential operators of higher order in general. 
In particular, one obtains with $\Div(F)=0$,
\[
\F^F((\times\cdot t)\graft \forestBJ)_\circlearrowleft)=\llbracket E_i,\F^F(t)\rrbracket_J [f^i],
\]
where we recall that $\llbracket E_i,\F^F(t)\rrbracket_J$ is a vector field and thus acts as a first order differential operator.

\subsection{Divergence-free Lie-group methods with aromatic Lie-Butcher series}

Our aim is to construct Lie-group integrators \cite{Iserles00lgm} that satisfy similar properties as the flow of \eqref{eq:ODE}. Exact volume-preservation is an open problem already in the Euclidean setting, so that we propose a methodology for the design of pseudo-divergence-free methods.

Let the exponential $\exp(tX)p$ for $X\in \XM$ be the solution of the ODE
\[
y'(t)=X(y(t)),\quad y(0)=p.
\]
Freezing the components at a point $q\in \MM$ yields the frozen exponential $\exp(tx^d(q)E_d) p$, which gives the geodesics for the connection $\tri$.
We consider Lie-Runge-Kutta methods of the form
\begin{align}
Y_i&=\exp(h\sum_j a_{ij}f^d(Y_j)E_d)y_n,\nonumber\\
\label{eq:LRK_methods}
y_{n+1}&=\exp(h\sum_i b_{i}f^d(Y_i)E_d)y_n,
\end{align}
where $A\in \R^{s\times s}$ and $b\in \R^s$ are the coefficients of the methods, $h$ is the timestep of the method, and $s$ is the number of stages.
Our approach extends straightforwardly to RKMK methods \cite{MuntheKaas95lbt, MuntheKaas97nio, MuntheKaas98rkm, MuntheKaas99hor} and frozen-flow methods \cite{Crouch93nio, Owren99rkm, Celledoni03cfl, Owren06ocf}, and we restrict our approach to the methods \eqref{eq:LRK_methods} for simplicity.
As exact volume preservation is impossible for methods of the form \eqref{eq:LRK_methods} (see \cite{Chartier07pfi, Iserles07bsm}), we design preprocessors to have a high-order of volume-preservation with a low order of convergence, similarly to the method proposed in \cite{Bogfjellmo19aso}. We recall in particular that the methods \eqref{eq:LRK_methods} cannot be of order more than two in general.

Lie-group methods are naturally described by planar Butcher trees and forests.
\begin{definition}
A Lie-Butcher (LB) series is a formal series indexed by planar trees
\[
B^F(a)=\sum_{t\in T} a(t)\F^F(t), \quad a\in \TT^*.
\]
Analogously, an aromatic LB series is a formal series indexed by planar aromatic trees
\[
B^F(a)=\sum_{t\in AT} a(t)\F^F(t), \quad a\in \AA\TT^*.
\]
\end{definition}

Thanks to Proposition \ref{prop:vol_pres_div_free}, the backward error analysis of Lie-group methods allows us to characterise methods that preserve the features of the system \eqref{eq:ODE} (see also \cite{Hairer06gni, Chartier10aso, Calaque11tih}).
\begin{proposition}[\cite{Lundervold13bea, Rahm22aoa}]
Consider a method of the form \eqref{eq:LRK_methods}.
Then, its Taylor expansion coincides with the one of the exact flow of the modified ODE
\[
y'(t)=\tilde{F}_h(y(t)),\quad h\tilde{F}_h=B^{hF}(b),
\]
where the modified vector field $\tilde{F}_h$ is given by a LB series.
More generally, if the method \eqref{eq:LRK_methods} is applied to a preprocessed vector field $h\hat{F}_h=B^{hF}(b)$ given by an aromatic LB series, then the modified vector field of the resulting method $\tilde{F}_h$ is given by a LB series.
The numerical method is called divergence-free if its modified vector field satisfies $\Div(\tilde{F}_h)=0$ and is called pseudo-divergence-free of order $p$ if $\Div(\tilde{F}_h)=\OO(h^p)$.
\end{proposition}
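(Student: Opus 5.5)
The plan is to prove this by backward error analysis in the enveloping algebra of the connection algebra, in the spirit of \cite{Lundervold13bea, Rahm22aoa}, treating the two claims in turn: first the plain method \eqref{eq:LRK_methods}, then its application to an aromatic preprocessed field.

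\textbf{Step 1 (plain method).} First I would write the exact pullback of one step of \eqref{eq:LRK_methods} as a formal series in the enveloping algebra $(\mathbb T(\TT),\oast,\Delta_\shuffle)$ of the free post-Lie algebra $\Lie(\TT)$. By Proposition~\ref{prop:free_post_Lie} this series is evaluated through the elementary differential map $\F^{hF}$.
\begin{itemize}
\item Each frozen exponential $\exp(h\sum_j a_{ij}f^d(Y_j)E_d)$ is the $\cdot$-exponential of an LB series, using the frozen product of Remark~\ref{rk:frozen_prod}.
\item The stage values are obtained by a fixed-point recursion on planar forests, as in the classical derivation of LB series for Lie--Runge--Kutta methods.
\end{itemize}
The result is a series $\Phi_h=\sum_\omega \phi(\omega)\,h^{|\omega|}\F^{F}(\omega)$ over planar forests. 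It is group-like for $\Delta_\shuffle$, because it is the pullback by a diffeomorphism, so $\phi$ is a character.

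I then set $h\tilde F_h:=\log_{\oast}(\Phi_h)$, where the logarithm is taken for the Grossman--Larson product. Since $\Phi_h$ is group-like, $\log_\oast\Phi_h$ is primitive. The primitive elements of $\mathbb T(\TT)$ are exactly $\Lie(\TT)$ by Cartier--Quillen--Milnor--Moore. Hence $h\tilde F_h=B^{hF}(b)$ is an LB series, and $\exp_\oast(h\tilde F_h)$ is the pullback of the exact $h$-flow of $\tilde F_h$. This gives the first claim order by order in $h$.

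\textbf{Step 2 (preprocessed field).} Next I would replace $F$ by $\hat F_h$ with $h\hat F_h=B^{hF}(b)$, an aromatic LB series. The method applied to $\hat F_h$ has modified field $B^{h\hat F_h}(\tilde b)$ for the same planar coefficients $\tilde b$ as in Step~1. So the task reduces to a substitution law: an LB series evaluated at an aromatic LB series is again an aromatic LB series.

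To get this, I use Proposition~\ref{prop:free_post_Lie}, or more strongly Theorem~\ref{thm:free_tpLR}.
\begin{itemize}
\item Send the single generator $\forestBJ$ to the element $\sum a(t)t\in\Lie_{\AA}(\AA\TT)$. This defines a post-Lie morphism $\Lie(\TT)\to\Lie_{\AA}(\AA\TT)$.
\item Composing it with the post-Lie-Rinehart morphism $\F^{F}$ gives $\F^{\hat F}$ on planar trees.
\end{itemize}
Because grafting in $\Lie_{\AA}(\AA\TT)$ acts on multi-aromas through the anchor $\rho$ and the Leibniz rule, the image of each planar tree is an $\AA$-linear combination of planar aromatic trees. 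Summing over trees then yields an aromatic LB series.

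\textbf{Main obstacle.} The delicate part is making Step~2 rigorous at the level of the enveloping algebra rather than only on $\Lie$. The logarithm and the Grossman--Larson product must be extended to $\AA\otimes\mathbb T(\TT)$, using the post-Hopf algebroid structure from \cite{Busnot25pha}. One must then check that the grafting of trees onto aromas, which goes through the anchor, is compatible with the Guin--Oudom extension. Concretely, this means verifying that the $\AA$-coefficients are differentiated correctly under $\oast$, so that $\log_\oast$ of an aromatic group-like element is primitive in $\Lie_{\AA}(\AA\TT)$.

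I would handle this by working degree by degree in $h$, where every sum is finite. I would first verify the compatibility on generators $a t$ with $a\in\AA$, using $(a_1t_1)\graft(a_2t_2)=a_1a_2(t_1\graft t_2)+a_1(t_1\graft a_2)t_2$, and then extend by the universal property. The definitions of pseudo-divergence-free are definitions and need no proof.
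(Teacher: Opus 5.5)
Your proof is correct and follows essentially the route of the works the paper cites for this proposition, since the paper gives no proof of its own: backward error analysis via the Grossman--Larson logarithm in $\mathbb T(\TT)$, then the substitution law from the universal property of the free post-Lie algebra $\Lie(\TT)$, here with target $\Lie_{\AA}(\AA\TT)$. One fix is needed: derive that $\phi$ is a character from the construction (the free series is a $\cdot$-exponential of an element of $\TT$), not from the image being a pullback, since $\F^F$ need not be injective.

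The ``main obstacle'' you flag does not arise. Step~1 gives a single Lie element, depending only on $(A,b)$, whose image under $\F^{hG}$ is the modified field for \emph{every} vector field $G$. So taking $G=\hat F_h$ and substituting afterwards at the Lie level already finishes the argument, with no aromatic enveloping algebra or post-Hopf algebroid needed.
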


\begin{ex}
Consider the Lie-Euler method
\begin{equation}
\label{eq:Lie_Euler}
y_{n+1}=\exp(hf^d(y_n)E_d)y_n.
\end{equation}
Then, the first terms of its modified vector field are
\begin{align*}
h\tilde{F}_h&=\F^{hF}\Big(
\forestCJ
-\frac{1}{2}\forestDJ
+\frac{1}{3}\forestEJ
+\frac{1}{12}\forestFJ
-\frac{1}{12}[\forestGJ,\forestHJ]
+\dots \Big).
\end{align*}
\end{ex}

The combinations of planar aromatic forests of order up to four of vanishing divergence $\Div(\Psi_k)=0$ are listed below. Note that we recover the solenoidal forms from \cite{Laurent23tab, Laurent23tld} when removing the commutators and using non-planar aromatic trees.
\begin{align*}
\Psi_1&=\forestIJ
+(\forestJJ)_\circlearrowleft \forestKJ
-(\forestLJ)_\circlearrowleft \forestMJ
-(\forestNJ,\forestOJ)_\circlearrowleft \forestPJ
+[\forestQJ,\forestRJ],\\
\Psi_2&=\forestSJ
+\forestTJ
+(\forestUJ)_\circlearrowleft \forestVJ
-(\forestWJ)_\circlearrowleft \forestXJ
-(\forestYJ,\forestAK)_\circlearrowleft \forestBK
-(\forestCK,\forestDK,\forestEK)_\circlearrowleft \forestFK
+[\forestGK,\forestHK],\\
\Psi_3&=\forestIK
+\forestJK
+\forestKK
+(\forestLK)_\circlearrowleft \forestMK
-\forestNK
-(\forestOK)_\circlearrowleft \forestPK
-(\forestQK,\forestRK)_\circlearrowleft \forestSK
-(\forestTK,\forestUK)_\circlearrowleft \forestVK
+[\forestWK,\forestXK],\\
\Psi_4&=(\forestYK)_\circlearrowleft \forestAL
+(\forestBL)_\circlearrowleft \forestCL
+(\forestDL)_\circlearrowleft (\forestEL)_\circlearrowleft \forestFL
-(\forestGL)_\circlearrowleft \forestHL
-(\forestIL)_\circlearrowleft (\forestJL)_\circlearrowleft \forestKL
-(\forestLL)_\circlearrowleft (\forestML,\forestNL)_\circlearrowleft \forestOL
+(\forestPL)_\circlearrowleft [\forestQL,\forestRL],\\
\Psi_5&=\forestSL
+(\forestTL)_\circlearrowleft \forestUL
+(\forestVL)_\circlearrowleft \forestWL
+(\forestXL,\forestYL)_\circlearrowleft \forestAM
-\forestBM
-(\forestCM)_\circlearrowleft \forestDM
-(\forestEM)_\circlearrowleft \forestFM
-(\forestGM,\forestHM)_\circlearrowleft \forestIM\\&
+[\forestJM,\forestKM]
+[\forestLM,\forestMM]
+(\forestNM)_\circlearrowleft [\forestOM,\forestPM]
+ [\forestQM,[\forestRM,\forestSM]].
\end{align*}

Our analysis with planar aromatic trees allows us to tackle the tedious calculations for deriving pseudo-divergence-free methods of high order.
In particular, we propose the following Lie-Runge-Kutta methods, where we use preprocessed aromatic vector fields that only require the first derivative of the $f^d$.
\begin{proposition}
Consider the Lie-Euler method \eqref{eq:Lie_Euler} for solving \eqref{eq:ODE} with $\Div(F)=0$.
Apply the method with the preprocessed vector field
\begin{align*}
h\hat{F}_h&=\F^{hF}\Big(\forestTM
+\frac{1}{2}\forestUM
-\frac{1}{3}\forestVM
-\frac{1}{12} (\forestWM)_\circlearrowleft \forestXM
-\frac{1}{12} (\forestYM,\forestAN)_\circlearrowleft \forestBN
+\frac{1}{6}[\forestCN,\forestDN]\Big)\\
&=hf^i E_i
+\frac{h^2}{2}f^j E_j[f^i] E_i
-\frac{h^3}{3} f^k E_k[f^j] E_j[f^i] E_i
-\frac{h^3}{12} f^k \llbracket E_j,E_k\rrbracket_J[f^j] f^i E_i\\&
-\frac{h^3}{12} E_j[f^k] E_k[f^j] f^i E_i
+\frac{h^3}{6} f^k f^j E_j[f^i] \llbracket E_k,E_i\rrbracket_J.
\end{align*}
Then, the resulting method has order two of convergence and is pseudo-divergence-free of third order.
\end{proposition}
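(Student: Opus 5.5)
The plan is to rely on backward error analysis and compose two aromatic LB series. The modified vector field of the Lie-Euler method applied to $\hat F_h$ is obtained by substituting the series $h\hat F_h=B^{hF}(\hat b)$ into the Lie-Euler modified field. By the preceding example, that field is $\tilde F=\F^{h\cdot}\big(\forestCJ-\tfrac12\forestDJ+\tfrac13\forestEJ+\tfrac1{12}\forestFJ-\tfrac1{12}[\forestGJ,\forestHJ]+\dots\big)$ evaluated at $\hat F_h$ instead of $F$. Since $\F^F$ is a post-Lie-Rinehart morphism and substitution is compatible with grafting and brackets, each term of the modified field can be written as an aromatic LB series in $F$. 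We expand to order $h^3$. Aromatic terms in $\hat F_h$ carry a multi-aroma $a$, which acts as a scalar function. Grafting onto $at$ uses the Leibniz rule $t_1\graft(at)=a(t_1\graft t)+(t_1\graft a)t$, but corrections of this kind only appear at order $h^4$, because the aromatic terms of $\hat F_h$ already sit at $h^3$.

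\textbf{Step 1 (order $h^2$).} Compute the $h^2$ coefficient of $h\tilde F_h$. It is $\tfrac12\forestUM$ from $\hat F_h$ plus $-\tfrac12\forestDJ$ from the Lie-Euler correction. These cancel, so $\tilde F_h=F+\OO(h^2)$, which gives order two of convergence.

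\textbf{Step 2 (order $h^3$).} Collect the $h^3$ coefficient, $\Phi:=h^{-3}(h\tilde F_h)_3$. Its contributions are:
\begin{itemize}
\item the $h^3$ terms of $\hat F_h$ itself;
\item the substitution cross terms of $-\tfrac12\forestDJ$ with $\tfrac12\forestUM$, namely $-\tfrac14(\forestUM\graft\forestCJ+\forestCJ\graft\forestUM)$;
\item the order-3 Lie-Euler terms $\tfrac13\forestEJ+\tfrac1{12}\forestFJ-\tfrac1{12}[\forestGJ,\forestHJ]$.
\end{itemize}
The $\forestVM$ contributions should cancel ($\tfrac13-\tfrac13$ plus the cross term), and the $\forestFJ$ contributions should combine. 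The goal is to recognise the surviving combination as a multiple of $\Psi_1$ modulo aromatic terms that vanish under $\F^F$. Here we use Lemma~\ref{lemma:degeneracies}: when $\Div(F)=0$, every aroma of the form $((t_1\cdots t_p\cdot\times)\graft\forestAJ)_\circlearrowleft$ is killed by $\F^F$. So the terms $(\forestLJ)_\circlearrowleft\forestMJ$ and the first-order aromas in $\Psi_1$ may be added or dropped freely.

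\textbf{Step 3 (divergence).} Since $\Div(\Psi_1)=0$ and $\Div\circ\F^F=\F^F\circ\Div$, it follows that $\Div(\tilde F_h)=\Div(F)+h^2\F^F(\Div\Phi)+\OO(h^3)=\OO(h^3)$. Here $\Div(F)=0$, and $\Div(\Phi)$ is annihilated once $\Phi\equiv c\Psi_1$ modulo degenerate aromas. Note that we need the degenerate aromas to have vanishing divergence image as well. This holds because they vanish as functions, so their products with trees vanish as vector fields.

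\textbf{Main obstacle.} The main difficulty is the bookkeeping in Step 2. We need the correct planar substitution law for Lie-Euler, with left grafting and the commutator term $[\forestCN,\forestDN]$ carrying the coefficient $\tfrac16$. We must then check that the coefficients $-\tfrac1{12}$ on the two aromatic terms exactly match $\Psi_1$ after the degenerate aromas are discarded. I would first verify the identity numerically on the coefficient level using Table~\ref{table:ex_planar_aroma_tree_comput}. I would then confirm the second expression of $h\hat F_h$ via the elementary differential definitions and the identity $\F^F((\times\cdot t)\graft\forestBJ)_\circlearrowleft=\llbracket E_i,\F^F(t)\rrbracket_J[f^i]$.
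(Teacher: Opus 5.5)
The route is the right one: substitute $h\hat F_h$ into the Lie--Euler modified field, collect the $h^3$ coefficient $\Phi$, and match it with a multiple of $\Psi_1$ up to degenerate aromas. This is the backward-error computation the paper relies on; the paper gives no written proof. Your Step~1 is correct.

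\textbf{Step 2 fails.} The argument breaks at your claim that the $\forestVM$ contributions cancel. Since $\forestUM\graft\forestCJ=\forestVM$ and $\forestCJ\graft\forestUM=\forestVM+\forestFJ$, the cross term you correctly listed equals $-\tfrac12\forestVM-\tfrac14\forestFJ$. Nothing offsets the $-\tfrac12\forestVM$, and the correct total is
\[
\Phi=-\tfrac12\forestVM-\tfrac16\forestFJ-\tfrac1{12}(\forestWM)_\circlearrowleft\forestXM-\tfrac1{12}(\forestYM,\forestAN)_\circlearrowleft\forestBN+\tfrac1{12}[\forestCN,\forestDN]
=\tfrac1{12}\Psi_1-\tfrac12\forestVM-\tfrac14\forestFJ-\tfrac1{12}(\forestJJ)_\circlearrowleft\forestKJ .
\]
So the printed coefficients are exactly those that would give $\Phi\equiv\tfrac1{12}\Psi_1$ if the cross term were absent.

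\textbf{Why the divergence cannot vanish.} The divergence of $\forestVM$ contains the aroma $(\forestJJ,\forestJJ,\forestJJ)_\circlearrowleft$, whose image is $\operatorname{tr}((dF)^3)$. This aroma is not degenerate. The other terms of $\Phi$ cannot cancel it: their divergences only involve aromas with a vertex carrying at least two incoming edges, or a factor $(\forestJJ)_\circlearrowleft$. Hence $\Div\Phi\neq 0$, and Step~3 cannot be completed.

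\textbf{Linear Euclidean check.} This is not a bookkeeping artefact. Take $E_i=\partial_i$ and $F(y)=Ay$ with $\operatorname{tr}A=0$. The preprocessed Euler step is $y\mapsto My$ with $M=I+hA+\tfrac{h^2}{2}A^2-\tfrac{h^3}{3}A^3-\tfrac{h^3}{12}\operatorname{tr}(A^2)A$. Then $\operatorname{tr}\log M=-\tfrac{h^3}{2}\operatorname{tr}(A^3)+\mathcal{O}(h^4)$, so $\Div(\tilde F_h)=-\tfrac{h^2}{2}\operatorname{tr}(A^3)+\mathcal{O}(h^3)$. For example, $A=\operatorname{diag}(1,1,-2)$ gives $3h^2+\mathcal{O}(h^3)$.

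\textbf{Consequence.} With the coefficients as printed, your argument, or any other, can only yield $\Div(\tilde F_h)=\mathcal{O}(h^2)$, which every order-two method already has. Your scheme does close for the preprocessor whose $h^3$ part is
\[
\tfrac16\forestVM+\tfrac16(\forestWM)_\circlearrowleft\forestXM+\tfrac16(\forestYM,\forestAN)_\circlearrowleft\forestBN-\tfrac1{12}[\forestCN,\forestDN].
\]
For this choice $\Phi=-\tfrac16\Psi_1+\tfrac16(\forestJJ)_\circlearrowleft\forestKJ$, and the last term vanishes because $\Div F=0$.

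\textbf{Misuse of the degeneracy lemma.} Lemma~\ref{lemma:degeneracies} only kills aromas whose crossed edge is the rightmost branch at the root, such as $(\forestTD)_\circlearrowleft$, whose image is $F[\Div F]=0$. The aroma $(\forestLJ)_\circlearrowleft$ has the cross on the left. It evaluates to $f^k\llbracket E_j,E_k\rrbracket_J[f^j]$, which is nonzero in general on a non-abelian group; that is why this term appears explicitly in the second line of the statement. So $(\forestLJ)_\circlearrowleft\forestMJ$ cannot be added or dropped freely. Among the aromas occurring here, only $(\forestJJ)_\circlearrowleft=\Div F$ and $(\forestTD)_\circlearrowleft$ vanish.
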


\begin{proposition}
Consider the Lie-Runge-Kutta method \eqref{eq:LRK_methods} with the following Butcher tableau for solving \eqref{eq:ODE} with $\Div(F)=0$.
\[
\begin{array}
{c|ccc}
0 & 0 & 0 & 0\\
\frac{-1+\sqrt{5}}{12} & \frac{-1+\sqrt{5}}{12} & 0 & 0\\
\frac{3+\sqrt{5}}{12} & \frac{-9-5\sqrt{5}}{12} & \frac{2+\sqrt{5}}{2} & 0\\\hline
& \frac{-7+3\sqrt{5}}{2} & 0 & \frac{9-3\sqrt{5}}{2}
\end{array}
\]
Apply the integrator to the preprocessed vector field
\begin{align*}
h\hat{F}_h
&=\F^{hF}\Big(\forestEN
-\frac{1}{12}\forestFN
+\frac{1-\sqrt{5}}{24} (\forestGN)_\circlearrowleft \forestHN
+\frac{1-\sqrt{5}}{24} (\forestIN,\forestJN)_\circlearrowleft \forestKN
+\frac{1+\sqrt{5}}{24}[\forestLN,\forestMN]\\&
+\frac{1}{8}\forestNN
-\frac{1}{12}\big(\forestON-\forestPN\big)
+\frac{1}{36}\big((\forestQN)_\circlearrowleft \forestRN -2(\forestSN)_\circlearrowleft \forestTN\big)
+\frac{1}{12}\big((\forestUN,\forestVN)_\circlearrowleft \forestWN -(\forestXN,\forestYN)_\circlearrowleft \forestAO\big)\\&
+\frac{1}{18}(\forestBO,\forestCO,\forestDO)_\circlearrowleft \forestEO
-\frac{1}{12} [\forestFO,\forestGO]
+\frac{1}{18} [\forestHO,[\forestIO,\forestJO]]
\Big)\\
&
=hF
+h^3\Big(
-\frac{1}{12} f^k E_k[f^j] E_j[f^i] E_i
+\frac{1-\sqrt{5}}{24} f^k \llbracket E_j,E_k\rrbracket_J[f^j] f^i E_i\\&
+\frac{1-\sqrt{5}}{24} E_j[f^k] E_k[f^j] f^i E_i
+\frac{1+\sqrt{5}}{24} f^k f^j E_j[f^i] \llbracket E_k,E_i\rrbracket_J
\Big)\\&
+h^4\Big(
\frac{1}{8} f^l E_l[f^k] E_k[f^j] E_j[f^i] E_i
-\frac{1}{12} f^l f^k E_k[f^j] \llbracket E_l,E_j\rrbracket_J[f^i] E_i\\&
+\frac{1}{36} f^l f^k \llbracket \llbracket E_j,E_l\rrbracket_J,E_k\rrbracket_J[f^j] f^i E_i
+\frac{1}{12} f^l E_j[f^k] \llbracket E_k,E_l\rrbracket_J[f^j] f^i E_i\\&
+\frac{1}{18} E_j[f^l] E_l[f^k] E_k[f^j] f^i E_i
-\frac{1}{12} f^l f^k E_k[f^j] E_j[f^i] \llbracket E_l,E_i\rrbracket_J\\&
+\frac{1}{18} f^l f^k f^j E_j[f^i] E_i \llbracket E_l,\llbracket E_k,E_i\rrbracket_J\rrbracket_J
\Big).
\end{align*}
Then, the resulting aromatic method has order two of convergence, but is pseudo-divergence-free of fourth order.
\end{proposition}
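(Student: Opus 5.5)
The statement to prove is that the preprocessed three-stage Lie-Runge-Kutta method has order two and is pseudo-divergence-free of order four. I would reduce everything to computations in the space of planar aromatic forests $\Lie_{\AA}(\AA\TT)$ and only at the end apply the morphism $\F^{F}$.

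\emph{Step 1: modified field of the unpreprocessed method.} By the backward error proposition of \cite{Lundervold13bea, Rahm22aoa}, the method \eqref{eq:LRK_methods} applied to a field $G$ has modified field $h\tilde G_h=B^{hG}(\beta)$, where $\beta$ is an LB series determined by $(A,b)$. I would compute $\beta$ on planar trees up to order four, and on commutators $[\cdot,\cdot]$ up to order four. The method is to expand the frozen exponentials with the post-Lie (Grossman--Larson) composition and then take the post-Lie logarithm, using the Hopf structures recalled after Proposition~\ref{prop:free_post_Lie}. The tableau coefficients involve $\sqrt5$, so I expect $\beta$ to contain nonzero coefficients such as $\beta([\forestLN,\forestMN])$ and $\beta(\forestFN)$ at order three.

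\emph{Step 2: substitute the preprocessed field.} Replace $G$ by $\hat F_h=B^{hF}(\hat a)$, where $\hat a$ is the aromatic coefficient map listed in the statement. The result $h\tilde F_h$ is the substitution (composition) of $\beta$ with $\hat a$, extended $\AA$-linearly to aromatic LB series. The extension is legitimate because aromas behave as scalars under $\graft$ through the Leibniz/anchor rules, and $\F^F$ is a post-Lie-Rinehart morphism. For order two, I would check that $\beta$ agrees with the exact flow up to order two and that $\hat a$ differs from $\forestEN$ only at order $\geq 3$. Hence $\tilde F_h=F+\OO(h^2)$, which gives convergence order two.

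\emph{Step 3: divergence.} Since $\Div\circ\F^F=\F^F\circ\Div$, it suffices to show that $\Div$ of the order-three and order-four parts of the composed series lies in the kernel of $\F^F$. Here the hypothesis $\Div(F)=0$ enters through Lemma~\ref{lemma:degeneracies}. By Table~\ref{table:ex_planar_aroma_tree_comput} and its order-four analogue, I would then decompose each homogeneous part as a combination of the divergence-free forms $\Psi_1,\dots,\Psi_5$ plus terms killed by the degeneracies, for example aromas $((t_1\cdots t_p\cdot\times)\graft\bullet)_\circlearrowleft$ and anything multiplied by such an aroma. The order-three condition is a small linear system in the coefficients of $\Psi_1$, and it should be solved by the $\frac{1\pm\sqrt5}{24}$ coefficients. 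The order-four condition involves $\Psi_2,\dots,\Psi_5$.

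The main obstacle is the order-four bookkeeping. It combines the planar BCH/post-Lie logarithm of a three-stage method with substitution and the cancellation of commutator terms, including nested ones such as $[\forestHO,[\forestIO,\forestJO]]$. I would handle it with a computer-algebra enumeration of planar aromatic forests of order four, computing $\Div$ via the composition-decomposition rule for $\tau$. I would then verify that the residual lies in the span of degenerate aromas. This is a finite linear check.
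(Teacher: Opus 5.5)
Your framework is the computational route the paper has in mind: backward error analysis, the modified field of the preprocessed method as an aromatic LB series, $\Div=\tau\circ d$, and Lemma~\ref{lemma:degeneracies}. The paper states this proposition without a written proof, and your order-two argument is fine.

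Two remarks on the set-up:
\begin{itemize}
\item Substituting an aromatic series is not linear over the aromas in general, since grafting differentiates them. This does not matter here: $\sum_i b_ic_i=\frac12$ kills the order-two coefficient of the modified field, and the preprocessor has no order-two term. So up to order four the composed series is just $\beta_3+\hat a_3$ and $\beta_4+\hat a_4$.
\item Decomposing into the $\Psi_k$ contributes nothing, because $\Div(\Psi_k)=0$. All that must be checked is that $\Div(\beta_3+\hat a_3)$ and $\Div(\beta_4+\hat a_4)$ lie in the ideal generated by the degenerate aromas.
\end{itemize}

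The gap is that the whole content of the proposition is the outcome of this check. You assert it (``it should be solved by the $\frac{1\pm\sqrt{5}}{24}$ coefficients'') rather than perform it. By my calculation below, with the data as printed the check already fails at order three.

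Specialise to an abelian frame, for instance $\mathbb{R}^n$ or a torus with constant frame. Then all brackets vanish, and the aroma with the cross to the left of the leaf becomes $f^b\partial_b(\operatorname{div}f)=0$. The method is the classical Runge--Kutta method with this tableau.

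From $b_3c_3=\frac12$, $b_3a_{32}c_2=\frac14$ and $\sum_ib_ic_i^2=\frac{c_3}{2}=\frac{3+\sqrt{5}}{24}$, the order-three part of the modified field is
\[
\frac{1}{12}f'f'f+\frac{\sqrt{5}-5}{48}f''(f,f).
\]
Adding the preprocessor contributions $-\frac{1}{12}f'f'f+\frac{1-\sqrt{5}}{24}\operatorname{tr}(f'^2)f$ leaves
\[
h^3\Big(\frac{\sqrt{5}-5}{48}\,f''(f,f)+\frac{1-\sqrt{5}}{24}\,\operatorname{tr}(f'^2)\,f\Big).
\]
When $\operatorname{div}f=0$ one has $\operatorname{div}(f''(f,f))=\operatorname{div}(\operatorname{tr}(f'^2)f)=f[\operatorname{tr}(f'^2)]$. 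So the divergence of this term is $-\frac{3+\sqrt{5}}{48}h^3f[\operatorname{tr}(f'^2)]$. For example, for $f=(y,-x^2)$ it equals $\frac{3+\sqrt{5}}{12}h^3y$.

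Hence $\Div(\tilde F_h)$ has a nonzero $h^2$ term, and no order-four bookkeeping can close the argument. The printed coefficient $\frac{1-\sqrt{5}}{24}$ would be correct if the $f''(f,f)$ (cherry) coefficient were $\sum_ib_ic_i^2-\frac16=\frac{\sqrt{5}-1}{24}$ rather than $\frac12\sum_ib_ic_i^2-\frac16$. This points to a dropped factor $\frac12$ in the data.

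Carried out faithfully, your plan would expose this inconsistency rather than prove the statement. You would first have to recompute $\beta_3$ (and then $\beta_4$) and correct the preprocessor or the tableau.
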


\section{Conclusion}
\label{sec:Conclusion}

In this paper, we defined planar aromatic trees and showed that they are the free tracial post-Lie-Rinehart algebra. This algebraic object finds concrete applications in the numerical integration of ODEs on manifolds for the design of pseudo-divergence-free maps via backward error analysis.

This work opens several research avenues that we will explore in future works.
First, the characterisation and existence of divergence-free integrators is an important open problem of geometric integration, even for Euclidean ODEs. This calls for the generalisation of the works \cite{Bogfjellmo22uat, Laurent23tld, Laurent23tab, Dotsenko24vpo} to the manifold case.
The understanding of divergence-free maps for ODEs shows strong links with the discretisations of ergodic stochastic differential equations that sample the invariant measure exactly, with numerous applications in molecular dynamics, stochastic optimisation, and machine learning. This link is uncovered in \cite{Laurent20eab, Bronasco22cef} in $\R^D$ and in \cite{Bronasco25hoi} through the use of exotic Butcher series and extensions, so that generalising the present work to the stochastic context is natural.
On the geometric side, it would be interesting to characterise planar aromatic B-series with universal geometric properties in the spirit of \cite{MuntheKaas16abs, McLachlan16bsm, Laurent23tue}.
On the algebraic side, the structures associated to aromas were recently studied from various point of views in \cite{Laurent23tab, Busnot25pha, Zhu25aac} with possible applications beyond numerical analysis and combinatorial algebra.
Moreover, we recall from \cite{Lundervold11hao, Grong23pla} that post-Lie algebras are associated to the connection algebra in a context of constant torsion and vanishing curvature, which is not the natural framework for working with volume forms. It is important to generalise the concept of aromas to more general connection algebras in order to obtain general statements for volume-preservation on Riemannian manifolds.

\bigskip

\noindent \textbf{Acknowledgements.}\
The authors acknowledge the support of the French program ANR-25-CE40-2862-01 (MaStoC - Manifolds and Stochastic Computations), the Research Council of Norway through project 302831 ``Computational Dynamics and Stochastics on Manifolds'' (CODYSMA), and the UiT--MaSCoT project at UiT, Troms\o.
The first author would like to thank Pauline Baudat for an enlightening discussion that inspired our notation of planar aromas.
The present paper and the work \cite{Rahm26tup} were written simultaneously, unbeknownst to the authors, and both study the free tracial post-Lie-Rinehart algebra. The work \cite{Rahm26tup} focuses on the algebraic structure, while the present paper focuses on the geometric context and the numerical applications.

\bibliographystyle{abbrv}
\bibliography{ma_bibliographie}

\newpage

\begin{appendices}
\section{Additional background on Lie--Rinehart algebras}
This appendix collects background material on Lie--Rinehart algebras
and related geometric constructions that is not required for the main
development, but may be useful for context.

\subsection{Chevalley-Eilenberg Cohomology}
Given a Lie-Rinehart algebra $(R, L)$, let the Chevalley-Eilenberg cochain complex denoted by $\mathfrak{CE}(L, \End_{R}(L), \omega) = ( \bigwedge^{\bullet} \,L^{\ast}, \End_{R}(L))$ where    
\begin{enumerate}[label = (\roman*)]
\item $\End_{R}(L)$ is the $k$-Lie algebra of $R$-linear endomorphisms of the $R$-module $L$. The Lie bracket of $\End_R(L)$ is denoted by $\llbracket \cdot, \cdot \rrbracket$,

\item $\omega \in L^{\ast} \otimes \End_{R}(L)$ and is called a $\End_R(L)$-Lie algebra valued {\em connection} 1-form satisfying 
\begin{align}\label{eq:conn-form}
(\omega(r.g_1) - r.\omega(g_1))(g_2) = (\rho(g_2)(r))g_1,
\end{align}

\item $L^{\ast}$ is the dual of the $R$-module $L$,
\item $\bigwedge^{\bullet}L^{\ast}$ is the differential graded exterior algebra of the $R$-module $L^{\ast}$.
\end{enumerate}
Given $\omega$, then $\End_R(L)$ is a $L$-module with $L$-action: $g.f = \llbracket \omega(\cdot)(g),f(\cdot)\rrbracket$ viz.,
\begin{align*}
(g.f)(g') &= \omega(f(g'))(g) - f((\omega(g')))(g) \\
&= \llbracket \omega(\cdot)(g), f(\cdot)\rrbracket (g'), 
\end{align*}
for $g,g' \in L$ and $f \in \End_R(L)$. It is straightforward to check via Jacobi identity that $g.(h.f) - h.(g.f) = \llbracket g,h\rrbracket. f$  and generalizes the adjoint action {\em twisted} by connection form.

The Chevalley-Eilenberg cochain complex is 
\begin{align*}
\End_{R}(L) \xrightarrow{d} L^{\ast} \otimes \End_R(L) \xrightarrow{d} \bigwedge ^2  L^{\ast} \otimes \End_R(L) \xrightarrow{d} \cdots   
\end{align*}
where the differential $d$ is defined as: for all $\gamma \in \bigwedge^{n}L^{\ast} \otimes \End_R(L)$ and $x_1, x_2,\ldots, x_{n} \in L$:
\begin{align*}
d\gamma (x_1,x_2,\ldots,x_{n+1}) 
&= \sum_{\sigma \in \mathrm{Sh}(1,n)} \mathrm{sgn}(\sigma) \, x_{\sigma(1)} \cdot \gamma(x_{\sigma(2)}, x_{\sigma(3)}, \ldots, x_{\sigma(n+1)}) \\
&- \sum_{\sigma \in \mathrm{Sh}(2,n-1)} \mathrm{sgn}(\sigma) \, \gamma([x_{\sigma(1)}, x_{\sigma(2)}], x_{\sigma(3)}, \ldots, x_{\sigma(n+1)}),
\end{align*} 
where $\text{Sh}(p,q)$ denote $(p,q)$-shuffles in the permutation group of $p+q$ letters. The {\em curvature} is a $2$-form $\mathrm{R} \in \bigwedge^2 L^{\ast} \otimes \End_{R}(L)$ given by 
\begin{align*}
\mathrm{R} = d\omega + \dfrac{1}{2}\left[ \omega \wedge \omega \right].
\end{align*}  

\subsection{Connection forms, reductive splittings, and torsion}
Say $\End_{R}(L)$ has a reductive decomposition viz., $\End_{R}(L) = \mathfrak{h} \oplus \mathfrak{m}$ as $R$-modules such that  
\begin{enumerate}[label = (\roman*)]
\item $\mathfrak{h}$ is a Lie subalgebra of $L$,
\item $\mathfrak{m}$ is a $\mathfrak{h}$-module under adjoint action of $\mathfrak{h}$ i.e, $\llbracket  \mathfrak{h}, \mathfrak{m} \rrbracket \subseteq \mathfrak{m}$.
\end{enumerate}
The $1$-form whence splits as $\omega = \omega_{\mathfrak{h}} + \omega_{\mathfrak{m}}$ where  $\omega_{\mathfrak{h}} \in L^{\ast} \otimes \mathfrak{h}$ and $\omega_{\mathfrak{m}} \in L^{\ast} \otimes \mathfrak{m}$ respectively. The $\omega_{\mathfrak{h}}$ is called {\em principal connection form} and $\omega_{\mathfrak{m}}$ is called {\em soldering form} or {\em vielbein}.
Consequently, the Lie algebra valued $2$-form curvature $\mathrm R$ splits as 
\begin{align*}
\mathrm{R} &= d\omega + \dfrac{1}{2} \left[ \omega \wedge \omega \right] \\
&= d(\omega_{\mathfrak{h}} + \omega_{\mathfrak{m}}) + \dfrac{1}{2} \left[ (\omega_{\mathfrak{h}} + \omega_{\mathfrak{m}})\wedge(\omega_{\mathfrak{h}} + \omega_{\mathfrak{m}})\right] \\
&= d\omega_{\mathfrak{h}} + d\omega_{\mathfrak{m}} + \dfrac{1}{2} \{ \left[\omega_{\mathfrak{h}} \wedge \omega_{\mathfrak{h}}\right] + 2 \left[ \omega_{\mathfrak{h}} \wedge \omega_{\mathfrak{m}}\right] + \left[\omega_{\mathfrak{m}}\wedge\omega_{\mathfrak{m}}\right] \}.  
\end{align*}
Since $\mathfrak{m}$ is not necessarily a Lie subalgebra, the term $\left[\omega_{\mathfrak{m}}, \omega_{\mathfrak{m}}\right]$ can have values in both $\mathfrak{h}$ and $\mathfrak{m}$. We obtain
\begin{align*}
\left[\omega_{\mathfrak{m}}\wedge\omega_{\mathfrak{m}}\right] &= \left[\omega_{\mathfrak{m}}\wedge \omega_{\mathfrak{m}}\right]_{\mathfrak{h}} + \left[\omega_{\mathfrak{m}}\wedge\omega_{\mathfrak{m}}\right]_{\mathfrak{m}}.
\end{align*}
The curvature form $\mathrm{R}$ thus splits into terms in $\mathfrak{h}$ and $\mathfrak{m}$ as
\begin{align*}
\mathrm{R} &= \underbrace{d\omega_{\mathfrak{h}} + \dfrac{1}{2}\left[\omega_{\mathfrak{h}}\wedge \omega_{\mathfrak{h}}\right] + \left[\omega_{\mathfrak{m}}\wedge\omega_{\mathfrak{m}}\right]_{\mathfrak{h}} }_{\text{terms in}\,\mathfrak{h}} + \underbrace{d\omega_{\mathfrak{m}} + \left[\omega_{\mathfrak{h}}\wedge\omega_{\mathfrak{m}}\right] + \dfrac{1}{2}\left[\omega_{\mathfrak{m}} \wedge \omega_{\mathfrak{m}}\right]_{\mathfrak{m}}}_{\text{terms in}\, \mathfrak{m}} \\
&= \mathcal{R} + \mathcal{T}.
\end{align*}
The terms $\mathcal{R}$ is the {\em intrinsic curvature} $2$-form and $\mathcal{T}$ is the {\em torsion}. In the case where the Lie ideal \(\mathfrak m\) is an Abelian Lie algebra, then  the expressions for intrinsic curvature and torsion reduce respectively to 
\begin{align*}
\mathcal{R} &= d\omega_{\mathfrak{h}} + \dfrac{1}{2}\left[\omega_{\mathfrak{h}}\wedge \omega_{\mathfrak{h}}\right], \\
\mathcal{T} &= d\omega_{\mathfrak{m}} + \left[\omega_{\mathfrak{h}}\wedge\omega_{\mathfrak{m}}\right].
\end{align*}

\subsection{Levi-Civita and Weitzenb\"ock Connection}
The connection form \(\omega \in L^{\ast} \otimes \End_R(L) \) is an affine connection form when $\End_R(L) \cong \mathfrak{aff}(n)$ where \(\mathfrak{aff}(n)\) is the Lie algebra of affine group. Note that, \( \mathfrak {aff}(n)\) has a reductive decomposition:
\[
\mathfrak {aff}(n) = \mathfrak{gl}(n) \oplus \mathbb R^n.
\]

The connection form \(\omega \in L^{\ast} \otimes \End_R(L) \) with \(\End_R(L) \cong \mathfrak {iso}(n)\), the Lie algebra of isometries of the affine space \(\mathbb R^n\). Recall that 
\(\mathfrak {iso}(n) \hookrightarrow \mathfrak {aff}(n)\) and has reductive decomposition as 
\[
\mathfrak{iso} (n) = \mathfrak{so}(n) \oplus \mathbb R^n,
\]  
where \(\mathfrak {so}(n)\) is the Lie subalgebra and \(\mathbb R^n\) is \(\mathfrak {so}(n)\) ideal. The both Levi-Civita and Weitzenb\"ock connections are isometric connections which are explained shortly.  The 2-form curvature in isometric connection splits into intrinsic curvature and torsion as 

\[
\mathrm{R} = \underbrace{\mathcal R}_{\text{valued in \,} \mathfrak{so} (n)} + \underbrace{\mathcal T}_{\text{valued in \,} \mathbb R^n}.
\]
The {\em Levi-Civita connection} is the isometric connection where \(\mathcal T = 0\) (torsion vanishes) and {\em Weitzenb\"ock connection} is the isometric connection where \(\mathcal R = 0\) (intrinsic curvature vanishes).
The Bianchi identity for the Levi-Civita connection whence reduces to  
\[
d\mathrm{R} = d \mathcal{R} = 0,
\]    
while for the Weitzenb\"ock connection the Bianchi identity is
\[ 
d\mathrm{R} = d \mathcal T = 0.
\]

\newpage

\section{Computations on planar aromatic trees of order four}
\label{app:AT_examples}

\begin{longtable}{|C|C|}
\hline
t\in \Lie_{\AA}(\AA\TT) & \Div(t) \\\hline
\forestKO & (\forestLO,\forestMO,\forestNO,\forestOO)_\circlearrowleft+(\forestPO,\forestQO,\forestRO)_\circlearrowleft+(\forestSO,\forestTO)_\circlearrowleft+(\forestUO)_\circlearrowleft \\
\forestVO & (\forestWO,\forestXO,\forestYO)_\circlearrowleft+(\forestAP,\forestBP,\forestCP)_\circlearrowleft+(\forestDP,\forestEP)_\circlearrowleft+(\forestFP)_\circlearrowleft \\
\forestGP & (\forestHP,\forestIP,\forestJP)_\circlearrowleft+(\forestKP,\forestLP)_\circlearrowleft+(\forestMP,\forestNP)_\circlearrowleft+(\forestOP)_\circlearrowleft \\
\forestPP & (\forestQP,\forestRP)_\circlearrowleft+(\forestSP,\forestTP,\forestUP)_\circlearrowleft+(\forestVP,\forestWP)_\circlearrowleft+(\forestXP)_\circlearrowleft \\
\forestYP & (\forestAQ,\forestBQ)_\circlearrowleft+(\forestCQ,\forestDQ)_\circlearrowleft+(\forestEQ,\forestFQ)_\circlearrowleft+(\forestGQ)_\circlearrowleft \\
(\forestHQ)_\circlearrowleft \forestIQ & (\forestJQ)_\circlearrowleft(\forestKQ,\forestLQ,\forestMQ)_\circlearrowleft +(\forestNQ)_\circlearrowleft(\forestOQ,\forestPQ)_\circlearrowleft +(\forestQQ)_\circlearrowleft(\forestRQ)_\circlearrowleft +(\forestSQ)_\circlearrowleft \\
(\forestTQ)_\circlearrowleft \forestUQ & (\forestVQ)_\circlearrowleft(\forestWQ,\forestXQ)_\circlearrowleft +(\forestYQ)_\circlearrowleft(\forestAR,\forestBR)_\circlearrowleft +(\forestCR)_\circlearrowleft(\forestDR)_\circlearrowleft +(\forestER)_\circlearrowleft \\
(\forestFR)_\circlearrowleft \forestGR & (\forestHR)_\circlearrowleft (\forestIR,\forestJR)_\circlearrowleft +(\forestKR)_\circlearrowleft (\forestLR)_\circlearrowleft +(\forestMR)_\circlearrowleft +(\forestNR)_\circlearrowleft \\
(\forestOR)_\circlearrowleft \forestPR & (\forestQR)_\circlearrowleft (\forestRR,\forestSR)_\circlearrowleft +(\forestTR)_\circlearrowleft (\forestUR)_\circlearrowleft +(\forestVR)_\circlearrowleft +(\forestWR)_\circlearrowleft \\
(\forestXR,\forestYR)_\circlearrowleft \forestAS & (\forestBS,\forestCS)_\circlearrowleft (\forestDS,\forestES)_\circlearrowleft +(\forestFS,\forestGS)_\circlearrowleft (\forestHS)_\circlearrowleft +2(\forestIS,\forestJS)_\circlearrowleft \\
(\forestKS)_\circlearrowleft (\forestLS)_\circlearrowleft \forestMS & (\forestNS)_\circlearrowleft (\forestOS)_\circlearrowleft (\forestPS,\forestQS)_\circlearrowleft +(\forestRS)_\circlearrowleft (\forestSS)_\circlearrowleft (\forestTS)_\circlearrowleft +2(\forestUS)_\circlearrowleft(\forestVS)_\circlearrowleft \\
(\forestWS)_\circlearrowleft \forestXS & (\forestYS)_\circlearrowleft (\forestAT)_\circlearrowleft +(\forestBT)_\circlearrowleft +(\forestCT)_\circlearrowleft +(\forestDT)_\circlearrowleft \\
(\forestET)_\circlearrowleft \forestFT & (\forestGT)_\circlearrowleft (\forestHT)_\circlearrowleft +(\forestIT)_\circlearrowleft +(\forestJT)_\circlearrowleft +(\forestKT)_\circlearrowleft  \\
(\forestLT)_\circlearrowleft \forestMT & (\forestNT)_\circlearrowleft (\forestOT)_\circlearrowleft +(\forestPT)_\circlearrowleft +(\forestQT)_\circlearrowleft +(\forestRT)_\circlearrowleft \\
(\forestST)_\circlearrowleft \forestTT & (\forestUT)_\circlearrowleft (\forestVT)_\circlearrowleft +(\forestWT)_\circlearrowleft +(\forestXT)_\circlearrowleft +(\forestYT)_\circlearrowleft \\
(\forestAU)_\circlearrowleft \forestBU & (\forestCU)_\circlearrowleft (\forestDU)_\circlearrowleft +(\forestEU)_\circlearrowleft +(\forestFU)_\circlearrowleft +(\forestGU)_\circlearrowleft \\
(\forestHU,\forestIU)_\circlearrowleft \forestJU & (\forestKU,\forestLU)_\circlearrowleft (\forestMU)_\circlearrowleft +(\forestNU,\forestOU)_\circlearrowleft +(\forestPU,\forestQU)_\circlearrowleft +(\forestRU,\forestSU)_\circlearrowleft \\
(\forestTU,\forestUU)_\circlearrowleft \forestVU & (\forestWU,\forestXU)_\circlearrowleft (\forestYU)_\circlearrowleft +(\forestAV,\forestBV)_\circlearrowleft +(\forestCV,\forestDV)_\circlearrowleft +(\forestEV,\forestFV)_\circlearrowleft \\
(\forestGV,\forestHV,\forestIV)_\circlearrowleft \forestJV & (\forestKV,\forestLV,\forestMV)_\circlearrowleft (\forestNV)_\circlearrowleft +3(\forestOV,\forestPV,\forestQV)_\circlearrowleft \\
(\forestRV)_\circlearrowleft (\forestSV)_\circlearrowleft \forestTV & (\forestUV)_\circlearrowleft (\forestVV)_\circlearrowleft (\forestWV)_\circlearrowleft +(\forestXV)_\circlearrowleft (\forestYV)_\circlearrowleft +(\forestAW)_\circlearrowleft (\forestBW)_\circlearrowleft +(\forestCW)_\circlearrowleft (\forestDW)_\circlearrowleft \\
(\forestEW)_\circlearrowleft (\forestFW)_\circlearrowleft \forestGW & (\forestHW)_\circlearrowleft (\forestIW)_\circlearrowleft (\forestJW)_\circlearrowleft +(\forestKW)_\circlearrowleft (\forestLW)_\circlearrowleft +(\forestMW)_\circlearrowleft (\forestNW)_\circlearrowleft +(\forestOW)_\circlearrowleft (\forestPW)_\circlearrowleft \\
(\forestQW,\forestRW)_\circlearrowleft (\forestSW)_\circlearrowleft \forestTW & (\forestUW,\forestVW)_\circlearrowleft (\forestWW)_\circlearrowleft (\forestXW)_\circlearrowleft +(\forestYW,\forestAX)_\circlearrowleft (\forestBX)_\circlearrowleft +2(\forestCX,\forestDX)_\circlearrowleft (\forestEX)_\circlearrowleft \\
(\forestFX)_\circlearrowleft (\forestGX)_\circlearrowleft (\forestHX)_\circlearrowleft \forestIX & (\forestJX)_\circlearrowleft (\forestKX)_\circlearrowleft (\forestLX)_\circlearrowleft (\forestMX)_\circlearrowleft +3(\forestNX)_\circlearrowleft (\forestOX)_\circlearrowleft (\forestPX)_\circlearrowleft \\
\hline
\end{longtable}
\newpage

\begin{longtable}{|C|C|}
\hline
t\in \Lie_{\AA}(\AA\TT) & \Div(t) \\\hline
{}[\forestQX,\forestRX] & (\forestSX,\forestTX,\forestUX)_\circlearrowleft-(\forestVX,\forestWX,\forestXX)_\circlearrowleft+(\forestYX,\forestAY)_\circlearrowleft-(\forestBY,\forestCY)_\circlearrowleft\\&+(\forestDY)_\circlearrowleft-(\forestEY)_\circlearrowleft+(\forestFY)_\circlearrowleft-(\forestGY)_\circlearrowleft \\
{}[\forestHY,\forestIY] & (\forestJY,\forestKY)_\circlearrowleft-(\forestLY,\forestMY)_\circlearrowleft+(\forestNY)_\circlearrowleft\\&-(\forestOY)_\circlearrowleft+(\forestPY)_\circlearrowleft-(\forestQY)_\circlearrowleft \\
{}(\forestRY)_\circlearrowleft[\forestSY,\forestTY] & (\forestUY)_\circlearrowleft (\forestVY,\forestWY)_\circlearrowleft-(\forestXY)_\circlearrowleft (\forestYY,\forestAAB)_\circlearrowleft+(\forestBAB)_\circlearrowleft (\forestCAB)_\circlearrowleft-(\forestDAB)_\circlearrowleft (\forestEAB)_\circlearrowleft\\&+(\forestFAB)_\circlearrowleft (\forestGAB)_\circlearrowleft-(\forestHAB)_\circlearrowleft (\forestIAB)_\circlearrowleft +(\forestJAB)_\circlearrowleft-(\forestKAB)_\circlearrowleft\\
{}[\forestLAB,[\forestMAB,\forestNAB]] & (\forestOAB)_\circlearrowleft +(\forestPAB)_\circlearrowleft +(\forestQAB)_\circlearrowleft + (\forestRAB)_\circlearrowleft  -2(\forestSAB)_\circlearrowleft -2(\forestTAB)_\circlearrowleft \\&+(\forestUAB)_\circlearrowleft +(\forestVAB)_\circlearrowleft  -2(\forestWAB)_\circlearrowleft +(\forestXAB)_\circlearrowleft +(\forestYAB)_\circlearrowleft -2(\forestABB)_\circlearrowleft
\\
\hline
\end{longtable}

\end{appendices}

\end{document}